\newcolumntype{P}[1]{>{\centering\arraybackslash}p{#1}}
\newcolumntype{M}[1]{>{\centering\arraybackslash}m{#1}}
\newcommand{\R}{\mathbb{R}}
\newcommand{\C}{\mathbb{C}}
\newcommand{\J}{\mathbf{J}}
\newcommand{\G}{\mathbf{G}}
\newcommand{\Y}{\mathbf{Y}}
\newcommand{\Ss}{\mathbf{S}}
\newcommand{\Q}{\mathbf{Q}}
\newcommand{\I}{\mathbf{I}}
\newcommand{\Pb}{\mathbf{P}}
\newcommand{\Ub}{\mathbf{U}}
\newcommand{\Ci}{\mathbf{C}^{-1}}
\DeclareMathAlphabet\mathbfcal{OMS}{cmsy}{b}{n}
\DeclareMathOperator{\Tr}{Tr}
\newtheorem{theorem}{Theorem}[section]
\newtheorem{lemma}[theorem]{Lemma}
\begin{document}
 
% --------------------------------------------------------------
%                         Start here
% --------------------------------------------------------------
 
\title{Supplementary Material for Manuscript:\\ ``Element-wise and Recursive Solutions for the Power Spectral Density of Biological Stochastic Dynamical Systems at Fixed Points"}

\author[1,2]{Shivang Rawat}
\author[1,2,3]{Stefano Martiniani}
\affil[1]{Courant Institute of Mathematical Sciences, New York University, New York 10003, USA}
\affil[2]{Center for Soft Matter Research, Department of Physics, New York University, New York 10003, USA}
\affil[3]{Simons Center for Computational Physical Chemistry, Department of Chemistry, New York University, New York 10003, USA}

\date{}

\date{\vspace{-5ex}}

\small\maketitle
\pagenumbering{gobble}
\newpage
\pagenumbering{gobble}
\tableofcontents
\addtocontents{toc}{\protect\setcounter{tocdepth}{3}}
\newpage
\pagenumbering{arabic}

\section{Spectral density matrix for a linear SDE / non-linear SDE near stable fixed points}

We begin with describing the system. A general high-dimensional stochastic differential equation (SDE) describing the time evolution of quantities associated with the system can be written as:
\begin{equation}
    \mathrm{d}\mathbf{x} = \mathbf{f}(\mathbf{x})\mathrm{d}t + \mathbf{L} \mathrm{d}\mathbf{W}    \label{eq:main1}
\end{equation}
Where $\mathbf{x} \in \R^{n}$ is a vector of variables that evolve with time according to the nonlinear function $ \mathbf{f}(\mathbf{x}) \in \R^{n} $ and an additive noisy drive given by $\mathbf{L} \mathrm{d}\mathbf{W} \in \R^{n}$. The matrix  $\mathbf{L} \in \R^{n\times m}$ is composed of the weights of the different noise sources appearing in the SDE,
\begin{equation}
    \mathbf{L} = \begin{bmatrix}
        l_{11} & l_{12} & \dots & l_{1m}\\
        l_{21} & l_{22} & \dots & l_{2m}\\
        \vdots &        & \ddots & \vdots \\
        l_{n1} & l_{n2} & \dots & l_{nm}
    \end{bmatrix}
\end{equation}
The differential noise vector (Wiener increments) ($\mathrm{d}\mathbf{W} = [\begin{matrix} \mathrm{d}\mathrm{W}_1 & \mathrm{d}\mathrm{W}_2 & \dots & \mathrm{d}\mathrm{W}_m \end{matrix}]^\top$) contains infinitesimally small independent Wiener increments with the following properties:
\begin{equation}
    \mathbb{E}[\mathrm{d}\mathrm{W}_i \mathrm{d}\mathrm{W}_j] = 
    \begin{cases}
        0 & i \neq j\\
        \sigma_i^2 \,dt & i=j
    \end{cases} \label{eq:wiener}
\end{equation}
with $\sigma^i \in \R^+$. Therefore, there is a diagonal diffusion matrix, $\mathbf{D}$, associated with the noise vector $\mathrm{d}\mathbf{W}$, such that its diagonal entries contain the variance of the corresponding noise terms. We can express this matrix as the product of two matrices that contain the corresponding standard deviations, $\sigma_i$. We call this matrix $\mathbf{S} \in \R^{m\times m}$ defined as follows,

\begin{equation}
    \mathbf{S} = \begin{bmatrix}
    \sigma_{1} & & & &\\
     & \sigma_2 & & & \\
    & & \ddots & & \\
    & &  & \sigma_{m-1} & \\
    & &  &  & \sigma_{m}\\
  \end{bmatrix}
\end{equation}
Such that the diffusion matrix $\mathbf{D}$ can be written as,
\begin{equation}
    \mathbf{D} = \mathbf{S} \, \mathbf{S}^\top \label{eq:diff_mat}
\end{equation}
We want to calculate the Power Spectral Density (PSD) matrix using the responses $\mathbf{x}(t)$, near the stable fixed points (stable node/spiral), when simulated using the dynamical system, Eq.~\ref{eq:main1}. To accomplish this, we consider the deterministic part of the SDE and linearize $\mathbf{f}(\mathbf{x})$ around its stable fixed point $\mathbf{x}_{ss}$, such that $\mathbf{f}(\mathbf{x_{ss}})=\mathbf{0}$.
Now, let us consider the linearized system about the stable fixed point $\mathbf{x}_{ss}$,
\begin{equation}
    \mathrm{d}\mathbf{x} = \left. \mathbf{J} \right|_{\mathbf{x}_{ss}} (\mathbf{x}-\mathbf{x}_{ss})\mathrm{d}t + \mathbf{L} \mathrm{d}\mathbf{W}     \label{eq:2}
\end{equation}
We make the variable transformation $\mathbf{\Delta}=\mathbf{x}-\mathbf{x}_{ss}$ and use a simpler notation for the Jacobian $\left. \mathbf{J} \right|_{\mathbf{x}_{ss}} \rightarrow \mathbf{J} \in \R^{n\times n}$. The SDE then becomes, 
\begin{equation}
    \mathrm{d}\mathbf{\Delta} = \mathbf{J}\mathbf{\Delta}\mathrm{d}t + \mathbf{L} \mathrm{d}\mathbf{W}      \label{eq:3}
\end{equation}
We can solve this LTI SDE analytically by Ito's solution, given by the following sum of a function of the initial condition (we assume that we start at the steady-state such that $\boldsymbol\Delta(0)=\mathbf{0}$) and a stochastic integral,
\begin{equation}
\begin{split}
    \boldsymbol\Delta(t)&=e^{\mathbf{J}t}\boldsymbol\Delta(0)+\int_{0}^{t} e^{\mathbf{J}(t-s)} \,\mathbf{L}\mathrm{d}\mathbf{W}(s) = \int_{0}^{t} e^{\mathbf{J}(t-s)} \,\mathbf{L}\mathrm{d}\mathbf{W}(s) 
\end{split}
\end{equation}
Similarly, we have,
\begin{equation}
    \boldsymbol\Delta(t+\tau)= \int_{0}^{t+\tau} e^{\mathbf{J}(t+\tau-s)} \,\mathbf{L}\mathrm{d}\mathbf{W}(s) 
\end{equation}
The cross-correlation matrix $\mathbfcal{R}(t,\tau)$ is defined as follows,
\begin{equation}
    \mathbfcal{R}(t,\tau) = \mathop{\mathbb{E}}\left[\mathbf{\Delta}(t)\mathbf{\Delta}^\top(t+\tau)\right]
\end{equation}
We assume stationary conditions which can be implemented by taking the limit $t \rightarrow \infty$ to make the statistics of the random variable $\mathbf{\Delta}(t \to \infty)$ stationary, such that the correlation matrix is only a function of $\tau$,  $\mathbfcal{R}(\tau)$. Substituting the solution for $\boldsymbol\Delta(t)$ and applying the limit, we get,
\begin{equation}
    \mathbfcal{R}(\tau)= \lim_{t\to\infty} \mathop{\mathbb{E}}\left[\mathbf{\Delta}(t)\mathbf{\Delta}^\top(t+\tau)\right]  = \lim_{t\to\infty} \mathop{\mathbb{E}}\left[\int_{0}^{t} e^{\mathbf{J}(t-s)} \,\mathbf{L}\mathrm{d}\mathbf{W}(s) \left(\int_{0}^{t+\tau} e^{\mathbf{J}(t+\tau-s)} \,\mathbf{L}\mathrm{d}\mathbf{W}(s) \right)^\top \right]   \label{eq:cor}
\end{equation}
Now, we calculate the correlation matrix for positive and negative values of $\tau$ separately. First, we consider the case when $\tau \geq 0$. Using the Ito isometry, the properties of matrix exponentials, and, the independence of Wiener increments through time, we can write Eq.~\ref{eq:cor} as,
\begin{equation}
\begin{split}
    \mathbfcal{R}(\tau) & = \lim_{t\to\infty}\int_{0}^{t} e^{\mathbf{J}(t-s)} \,\mathbf{L} \, \mathbb{E} \bigl[\mathrm{d}\mathbf{W}(s) \mathrm{d}\mathbf{W}(s)^\top \bigr]   \,\mathbf{L}^\top  \,e^{\mathbf{J}^\top(t+\tau-s)} \\
    & = \lim_{t\to\infty}\int_{0}^{t} e^{\mathbf{J}(t-s)} \,\mathbf{L} \, \mathbf{D}   \,\mathbf{L}^\top  \,e^{\mathbf{J}^\top(t+\tau-s)} \, \mathrm{d}s
    \label{eq:cor1}
\end{split}
\end{equation}
We use the property described in Eq.~\ref{eq:wiener} to make the substitution $\mathbb{E} \bigl[\mathrm{d}\mathbf{W}(s) \mathrm{d}\mathbf{W}(s)^\top \bigr] = \mathbf{D} \mathrm{d} s$. We simplify Eq.~\ref{eq:cor1} further by changing the variable of integration as $k=t-s$. 
\begin{equation}
\begin{split}
    \mathbfcal{R}(\tau) &= \lim_{t\to\infty}\int_{0}^{t} e^{\mathbf{J}k} \,\mathbf{L}  \, \mathbf{D}  \,\mathbf{L}^\top  \,e^{\mathbf{J}^\top(\tau+k)} \,\mathrm{d}k \\
    &= \int_{0}^{\infty} e^{\mathbf{J}k} \,\mathbf{L} \, \mathbf{D}   \,\mathbf{L}^\top  \,e^{\mathbf{J}^\top(\tau+k)} \,\mathrm{d}k
\end{split}
\end{equation}
Next, we consider the case when $\tau<0$. Again, using the Ito isometry, the properties of matrix exponentials, and, the independence of Wiener increments through time, we can write Eq.~\ref{eq:cor} as,
\begin{equation}
    \mathbfcal{R}(\tau) = \lim_{t\to\infty}\int_{0}^{t+\tau} e^{\mathbf{J}(t-s)} \,\mathbf{L}  \, \mathbf{D}  \,\mathbf{L}^\top  \,e^{\mathbf{J}^\top(t+\tau-s)} \,\mathrm{d}s   
\end{equation}
We change the variable of integration to $k=t-s$. The integral above becomes,
\begin{equation}
\begin{split}
    \mathbfcal{R}(\tau) &= \lim_{t\to\infty}\int_{-\tau}^{t} e^{\mathbf{J}k} \,\mathbf{L}  \, \mathbf{D}  \,\mathbf{L}^\top  \,e^{\mathbf{J}^\top(\tau+k)} \,\mathrm{d}k \\
    &= \int_{-\tau}^{\infty} e^{\mathbf{J}k} \,\mathbf{L}  \, \mathbf{D}  \,\mathbf{L}^\top  \,e^{\mathbf{J}^\top(\tau+k)} \,\mathrm{d}k
\end{split}
\end{equation}
Therefore the correlation matrix can be written as,
\begin{equation}
    \mathbfcal{R}(\tau) = 
    \begin{cases}
    \int_{0}^{\infty} e^{\mathbf{J}k} \,\mathbf{L}  \, \mathbf{D}  \,\mathbf{L}^\top  \,e^{\mathbf{J}^\top(\tau+k)} \,\mathrm{d}k, & \text{for } \tau \geq 0\\
    \int_{-\tau}^{\infty} e^{\mathbf{J}k} \,\mathbf{L}  \, \mathbf{D}  \,\mathbf{L}^\top  \,e^{\mathbf{J}^\top(\tau+k)} \,\mathrm{d}k, & \text{for } \tau < 0
    \end{cases}
\end{equation}
Since the correlation matrix is only a function of $\tau$ ($\Delta(t)$ is a wide-sense stationary process in the limit $t \rightarrow \infty$), we can apply the cross-correlation theorem (a general form of Wiener-Khinchin theorem) to calculate the analytical PSD matrix. The PSD matrix is defined as the Fourier transform of the correlation matrix,
\begin{equation}
\begin{split}
     \mathbfcal{S}(\omega) &= \mathcal{F}(\mathbfcal{R}(\tau)) = \int_{-\infty}^{\infty} \mathbfcal{R}(\tau) e^{-\dot{\iota} \omega \tau} \, \mathrm{d}\tau 
\end{split}
\end{equation}
Substituting the analytical integrals for the correlation matrix and simplifying we get,
\begin{equation}
\begin{split}
    \mathbfcal{S}(\omega) &= \int_{-\infty}^{0} \mathbfcal{R}(\tau) e^{-\dot{\iota} \omega \tau} \, \mathrm{d}\tau + \int_{0}^{\infty} \mathbfcal{R}(\tau) e^{-\dot{\iota} \omega \tau} \, \mathrm{d}\tau \\ 
    &= \underset{\Romannum{1}}{\int_{-\infty}^{0} \int_{-\tau}^{\infty} e^{\mathbf{J}k} \,\mathbf{L}  \, \mathbf{D}  \,\mathbf{L}^\top  \,e^{\mathbf{J}^\top(\tau+k)} \,\mathrm{d}k e^{-\dot{\iota} \omega \tau} \, \mathrm{d}\tau} \\
    & \, + \underset{\Romannum{2}}{\int_{0}^{\infty} \int_{0}^{\infty} e^{\mathbf{J}k} \,\mathbf{L}  \, \mathbf{D}  \,\mathbf{L}^\top  \,e^{\mathbf{J}^\top(\tau+k)} \,\mathrm{d}k e^{-\dot{\iota} \omega \tau} \, \mathrm{d}\tau}
\end{split}
\end{equation}
We first consider $\Romannum{1}$ and try to simplify the integral. On changing the order of integration, we get,
\begin{equation}
\begin{split}
    \Romannum{1} &= \int_{-\infty}^{0} \int_{-\tau}^{\infty} e^{\mathbf{J}k} \,\mathbf{L} \, \mathbf{D} \,\mathbf{L}^\top  \,e^{\mathbf{J}^\top(\tau+k)} \, e^{-\dot{\iota} \omega \tau} \,\mathrm{d}k\, \mathrm{d}\tau\\
    &= \int_{0}^{\infty} \int_{-k}^{0} e^{\mathbf{J}k} \,\mathbf{L} \, \mathbf{D} \,\mathbf{L}^\top  \,e^{\mathbf{J}^\top(\tau+k)} \, e^{-\dot{\iota} \omega \tau} \, \mathrm{d}\tau \, \mathrm{d}k
\end{split}
\end{equation}
% \begin{figure}[htpb]
%     \centering
%     \includegraphics[width=0.55\linewidth]{figure 1.pdf}
%     \caption{\small \textbf{Changing the order of integration.} The shaded region in blue is the area that we integrate over. This region is bounded by the limits of the integrals as shown in the red lines.}
%     \label{fig:orderint}
% \end{figure}
Next, we simplify $\Romannum{2}$ by changing the order of integration,
\begin{equation}
\begin{split}
    \Romannum{2} &= \int_{0}^{\infty} \int_{0}^{\infty} e^{\mathbf{J}k} \,\mathbf{L} \, \mathbf{D} \,\mathbf{L}^\top  \,e^{\mathbf{J}^\top(\tau+k)} \, e^{-\dot{\iota} \omega \tau} \,\mathrm{d}k\, \mathrm{d}\tau\\
    &= \int_{0}^{\infty} \int_{0}^{\infty} e^{\mathbf{J}k} \,\mathbf{L} \, \mathbf{D} \,\mathbf{L}^\top  \,e^{\mathbf{J}^\top(\tau+k)} \, e^{-\dot{\iota} \omega \tau} \, \mathrm{d}\tau \, \mathrm{d}k\\
    &= \int_{0}^{\infty} e^{\mathbf{J}k} \,\mathbf{L} \, \mathbf{D}  \,\mathbf{L}^\top \, \int_{0}^{\infty} e^{-\dot{\iota} \omega \tau} \, e^{\mathbf{J}^\top(\tau+k)} \,  \mathrm{d}\tau \, \mathrm{d}k
\end{split}
\end{equation}
Upon shifting the bounds of the inner integral by $k$, we get,
\begin{equation}
\begin{split}
    \Romannum{2} &= \int_{0}^{\infty} e^{\mathbf{J}k} \,\mathbf{L} \, \mathbf{D} \,\mathbf{L}^\top \, \int_{k}^{\infty} e^{-\dot{\iota} \omega (\tau-k)} \, e^{\mathbf{J}^\top\tau} \,  \mathrm{d}\tau \, \mathrm{d}k\\
    &= \int_{0}^{\infty} e^{\dot{\iota} \omega k} e^{\mathbf{J}k} \,\mathbf{L} \, \mathbf{D} \,\mathbf{L}^\top \, \int_{k}^{\infty} e^{-\dot{\iota} \omega \tau} \, e^{\mathbf{J}^\top\tau} \,  \mathrm{d}\tau \, \mathrm{d}k\\
    &= \int_{0}^{\infty} e^{\dot{\iota} \omega k} e^{\mathbf{J}k} \,\mathbf{L} \, \mathbf{D} \,\mathbf{L}^\top \,\left( \int_{0}^{\infty} e^{-\dot{\iota} \omega \tau} \, e^{\mathbf{J}^\top\tau} \,  \mathrm{d}\tau - \int_{0}^{k} e^{-\dot{\iota} \omega \tau} \, e^{\mathbf{J}^\top\tau} \,  \mathrm{d}\tau\right) \, \mathrm{d}k\\
    &= \underset{\Romannum{3}}{\int_{0}^{\infty} e^{\dot{\iota} \omega k} e^{\mathbf{J}k} \, \mathbf{L} \, \mathbf{D} \, \mathbf{L}^\top \int_{0}^{\infty} e^{-\dot{\iota} \omega \tau}  e^{\mathbf{J}^\top\tau}   \mathrm{d}\tau \, \mathrm{d}k} - \underset{\Romannum{4}}{\int_{0}^{\infty} e^{\dot{\iota} \omega k} e^{\mathbf{J}k} \, \mathbf{L}\,  \mathbf{D} \, \mathbf{L}^\top  \int_{0}^{k} e^{-\dot{\iota} \omega \tau}  e^{\mathbf{J}^\top\tau}   \mathrm{d}\tau \, \mathrm{d}k}
\end{split}
\end{equation}

We now introduce a general property of the Laplace transform of matrix exponentials (also known as the resolvent of a matrix) which we will use in the derivation.
\begin{equation}
    \int_{0}^{\infty} e^{-\tau s} \, e^{\tau \mathbf{J}} \mathrm{d}\tau = (s\mathbf{I}-\mathbf{J})^{-1}   
\end{equation}
Here $s$ can be any complex number. We calculate this integral for 2 different imaginary values, which we will then use directly. First, we make the substitution $s\rightarrow\dot{\iota}\omega$, this gives us,
\begin{equation}
    \int_{0}^{\infty} e^{-\dot{\iota}\omega\tau} \, e^{\tau\mathbf{J}} \mathrm{d}\tau = (\dot{\iota}\omega\mathbf{I}-\mathbf{J})^{-1}   \label{eq:aside1}
\end{equation}
Next, we make the substitution $s\rightarrow-\dot{\iota}\omega$, this gives us,
\begin{equation}
    \int_{0}^{\infty} e^{\dot{\iota}\omega\tau} \, e^{\tau\mathbf{J}} \mathrm{d}\tau = -(\dot{\iota}\omega\mathbf{I}+\mathbf{J})^{-1}   \label{eq:aside2}
\end{equation}
Using Eq.~\ref{eq:aside1} and Eq.~\ref{eq:aside2} to simplify $\Romannum{3}$, we get,
\begin{equation}
\begin{split}
    \Romannum{3} &= \int_{0}^{\infty} e^{\dot{\iota} \omega k} e^{\mathbf{J}k} \,\mathbf{L} \, \mathbf{D} \,\mathbf{L}^\top \,\int_{0}^{\infty} e^{-\dot{\iota} \omega \tau} \, e^{\mathbf{J}^\top\tau} \,  \mathrm{d}\tau \, \mathrm{d}k \\
    &= \int_{0}^{\infty} e^{\dot{\iota} \omega k} e^{\mathbf{J}k} \,\mathbf{L} \, \mathbf{D} \,\mathbf{L}^\top \, \mathrm{d}k \, (\dot{\iota}\omega\mathbf{I}-\mathbf{J}^\top)^{-1} \\
    &= \int_{0}^{\infty} e^{\dot{\iota} \omega k} e^{\mathbf{J}k} \, \mathrm{d}k \,\mathbf{L} \, \mathbf{D} \,\mathbf{L}^\top \, (\dot{\iota}\omega\mathbf{I}-\mathbf{J}^\top)^{-1} \\
    &=  -(\dot{\iota}\omega\mathbf{I}+\mathbf{J})^{-1} \,\mathbf{L}  \, \mathbf{D} \,\mathbf{L}^\top \, (\dot{\iota}\omega\mathbf{I}-\mathbf{J}^\top)^{-1} \\
    &=  (\dot{\iota}\omega\mathbf{I}+\mathbf{J})^{-1} \,\mathbf{L} \, \mathbf{D} \,\mathbf{L}^\top \, (-\dot{\iota}\omega\mathbf{I}+\mathbf{J})^{-\top} 
\end{split}
\end{equation}
Now, we simplify the integral $\Romannum{4}$, we first make the variable change $\tau=-\alpha$,
\begin{equation}
\begin{split}
    \Romannum{4} &= -\int_{0}^{\infty} e^{\dot{\iota} \omega k} e^{\mathbf{J}k} \,\mathbf{L}  \, \mathbf{D}  \,\mathbf{L}^\top \, \int_{0}^{k} e^{-\dot{\iota} \omega \tau} \, e^{\mathbf{J}^\top\tau} \,  \mathrm{d}\tau \, \mathrm{d}k\\
    &= -\int_{0}^{\infty} e^{\dot{\iota} \omega k} e^{\mathbf{J}k} \,\mathbf{L}  \, \mathbf{D}  \,\mathbf{L}^\top \, \int_{-k}^{0} e^{\dot{\iota} \omega \alpha} \, e^{-\mathbf{J}^\top\alpha} \,  \mathrm{d}\alpha \, \mathrm{d}k\\
    &= -\int_{0}^{\infty} \int_{-k}^{0} e^{\mathbf{J}k} \,\mathbf{L}  \, \mathbf{D}  \,\mathbf{L}^\top \,  e^{\dot{\iota} \omega (\alpha + k)} \, e^{-\mathbf{J}^\top\alpha} \,  \mathrm{d}\alpha \, \mathrm{d}k
\end{split}
\end{equation}
Next, we make the following variable transformation in the inner integral, $\alpha+k=-m$, this gives,
\begin{equation}
    \Romannum{4} = -\int_{0}^{\infty} \int_{-k}^{0} e^{\mathbf{J}k} \,\mathbf{L}  \, \mathbf{D}  \,\mathbf{L}^\top \,  e^{- \dot{\iota} \omega m} \, e^{\mathbf{J}^\top(m+k)} \,  \mathrm{d}m \, \mathrm{d}k
\end{equation}
Finally, we change the variable of integration $\tau=m$ in the inner integral,
\begin{equation}
\begin{split}
    \Romannum{4} &= -\int_{0}^{\infty} \int_{-k}^{0} e^{\mathbf{J}k} \,\mathbf{L}  \, \mathbf{D}  \,\mathbf{L}^\top \,  e^{- \dot{\iota} \omega \tau} \, e^{\mathbf{J}^\top(\tau+k)} \,  \mathrm{d}\tau \, \mathrm{d}k\\
    &= -\int_{0}^{\infty} \int_{-k}^{0} e^{\mathbf{J}k} \,\mathbf{L}  \, \mathbf{D}  \,\mathbf{L}^\top \, e^{\mathbf{J}^\top(\tau+k)} \,  e^{- \dot{\iota} \omega \tau} \,  \mathrm{d}\tau \, \mathrm{d}k
\end{split}
\end{equation}
The final solution is $\Romannum{1}+\Romannum{3}+\Romannum{4}$,
\begin{equation}
\begin{split}
    \mathbfcal{S}(\omega) =& \int_{0}^{\infty} \int_{-k}^{0} e^{\mathbf{J}k} \,\mathbf{L}  \, \mathbf{D}  \,\mathbf{L}^\top  \,e^{\mathbf{J}^\top(\tau+k)} \, e^{-\dot{\iota} \omega \tau} \, \mathrm{d}\tau \, \mathrm{d}k \nonumber \\&+ (\dot{\iota}\omega\mathbf{I}+\mathbf{J})^{-1} \,\mathbf{L}  \, \mathbf{D}  \,\mathbf{L}^\top \, (-\dot{\iota}\omega\mathbf{I}+\mathbf{J})^{-\top} \\&
    -\int_{0}^{\infty} \int_{-k}^{0} e^{\mathbf{J}k} \,\mathbf{L}  \, \mathbf{D}  \,\mathbf{L}^\top \, e^{\mathbf{J}^\top(\tau+k)} \,  e^{- \dot{\iota} \omega \tau} \,  \mathrm{d}\tau \, \mathrm{d}k \nonumber
\end{split}
\end{equation}

The first and last integrals cancel each other and we are left with the final solution,
\begin{equation}
    \mathbfcal{S}(\omega) = (\dot{\iota}\omega\mathbf{I}+\mathbf{J})^{-1} \,\mathbf{L}  \, \mathbf{D}  \,\mathbf{L}^\top \, (-\dot{\iota}\omega\mathbf{I}+\mathbf{J})^{-\top} \label{eq:matrix_sol}
\end{equation}
We also show that the solution is Hermitian. To do this, take the conjugate transpose of the above expression on both sides, denoted by $*$. Using the properties of conjugate transpose and the fact that $\mathbf{I},\mathbf{J}, \mathbf{L}, \mathbf{D}$ contain purely real elements, we simplify the expression below to find that the conjugate transpose of the matrix is equal to the matrix itself,
\begin{equation}
\begin{split}
    \mathbfcal{S}^{*}(\omega) &= \left[(\dot{\iota}\omega\mathbf{I}+\mathbf{J})^{-1} \,\mathbf{L}  \, \mathbf{D}\,\mathbf{L}^\top \, (-\dot{\iota}\omega\mathbf{I}+\mathbf{J})^{-\top}\right]^{*}\\
    &= ((-\dot{\iota}\omega\mathbf{I}+\mathbf{J})^{-\top})^{*} \, (\mathbf{L}  \, \mathbf{D}\,\mathbf{L}^\top)^{*} \, ((\dot{\iota}\omega\mathbf{I}+\mathbf{J})^{-1})^{*}\\
    &= ((-\dot{\iota}\omega\mathbf{I}+\mathbf{J})^{*})^{-\top} \, (\mathbf{L}  \, \mathbf{D}\,\mathbf{L}^\top)^{*} \, ((\dot{\iota}\omega\mathbf{I}+\mathbf{J})^{*})^{-1}\\
    &= ((-\dot{\iota}\omega\mathbf{I})^{*}+\mathbf{J}^{*})^{-\top} \, (\mathbf{L}^{\top})^{*}  \, \mathbf{D}^{*} \,\mathbf{L}^{*} \, ((\dot{\iota}\omega\mathbf{I})^{*}+\mathbf{J}^{*})^{-1}\\
    &= (\dot{\iota}\omega\mathbf{I}^{*}+\mathbf{J}^*)^{-\top} \, (\mathbf{L}^*)^{\top} \, \mathbf{D} \,\mathbf{L}^{\top} \, (-\dot{\iota}\omega\mathbf{I}^{*}+\mathbf{J}^{*})^{-1}\\
    &= (\dot{\iota}\omega\mathbf{I}+\mathbf{J}^\top)^{-\top} \, (\mathbf{L}^\top)^{\top} \, \mathbf{D} \, \mathbf{L}^{\top} \, (-\dot{\iota}\omega\mathbf{I}+\mathbf{J}^{\top})^{-1}\\
    &= (\dot{\iota}\omega\mathbf{I}+\mathbf{J})^{-1} \, \mathbf{L} \, \mathbf{D} \,\mathbf{L}^{\top} \, (-\dot{\iota}\omega\mathbf{I}+\mathbf{J})^{-\top}\\
    &= \mathbfcal{S}(\omega)
\end{split}
\label{eq:matrix_sol_hermitian}
\end{equation}
Additionally, if the inverse of the input covariance matrix $\mathbf{C} = \mathbf{L} \, \mathbf{D} \, \mathbf{L}^\top$ exists, i.e., it is positive definite ($\mathbf{C}\succ 0$), we can write the solution as,
\begin{equation}
\begin{split}
    \mathbfcal{S}(\omega) &= (\dot{\iota}\omega \mathbf{I} + \mathbf{J})^{-1} \, \mathbf{C} \, (-\dot{\iota}\omega \mathbf{I} + \mathbf{J})^{-\top} \\ 
    &= (\dot{\iota}\omega \mathbf{I} + \mathbf{J})^{-1} \, {\mathbf{C}} \, (-\dot{\iota}\omega \mathbf{I} + \mathbf{J}^{\top})^{-1}\\
    &= \left[(-\dot{\iota}\omega \mathbf{I} + \mathbf{J}^{\top}) \, {\mathbf{C}}^{-1} \, (\dot{\iota}\omega \mathbf{I} + \mathbf{J})\right]^{-1} \\
    &= \left[\mathbf{J}^{\top} {\mathbf{C}}^{-1} \mathbf{J} + \omega^2 {\mathbf{C}}^{-1} + \dot{\iota}\omega (\mathbf{J}^{\top}{\mathbf{C}}^{-1} - {\mathbf{C}}^{-1}\mathbf{J}) \right]^{-1}
\end{split}
\label{eq:matrix_sol_cholesky}
\end{equation}
where the matrix $\left[\mathbf{J}^{\top} {\mathbf{C}}^{-1} \mathbf{J} + \omega^2 {\mathbf{C}}^{-1} + \dot{\iota}\omega (\mathbf{J}^{\top}{\mathbf{C}}^{-1} - {\mathbf{C}}^{-1}\mathbf{J}) \right]$ is Hermitian and positive definite which allows us to use algorithms optimized for the inversion of Hermitian positive definite matrices such as the Cholesky decomposition.

To show that the matrix $\left[\mathbf{J}^{\top} {\mathbf{C}}^{-1} \mathbf{J} + \omega^2 {\mathbf{C}}^{-1} + \dot{\iota}\omega (\mathbf{J}^{\top}{\mathbf{C}}^{-1} - {\mathbf{C}}^{-1}\mathbf{J}) \right]$ is positive definite, we have to show that the matrix is Hermitian and prove the following,
\begin{equation}
\begin{split}
    \mathbf{x}^*\left[\mathbf{J}^{\top} {\mathbf{C}}^{-1} \mathbf{J} + \omega^2 {\mathbf{C}}^{-1} + \dot{\iota}\omega (\mathbf{J}^{\top}{\mathbf{C}}^{-1} - {\mathbf{C}}^{-1}\mathbf{J}) \right]\mathbf{x} > 0; \quad \forall \;\mathbf{x} \in \C^n \symbol{92} \{\mathbf{0}\}
\end{split}
\label{eq:matrix_sol_cholesky_pd}
\end{equation}
The fact that the matrix is Hermitian is easily seen from Eq.~\ref{eq:matrix_sol_hermitian}. To prove the second part, consider any non-zero complex vector $\mathbf{x} \in \C^n \symbol{92} \{\mathbf{0}\}$. Since, $\mathbf{C}$ is positive definite, $\mathbf{C}^{-1}$ is also positive definite, therefore, $\mathbf{x}^*\mathbf{C}^{-1}\mathbf{x}>0$. Since, $(\dot{\iota}\omega \mathbf{I} + \mathbf{J})$ is invertible for any $\omega \in \R$, we can write a transformation $\mathbf{y} = (\dot{\iota}\omega \mathbf{I} + \mathbf{J}) \, \mathbf{x}$. The complex vector $\mathbf{y}$ must satisfy $\mathbf{y}^*\mathbf{C}^{-1}\mathbf{y}>0$. Upon substitution, we get,
\begin{equation}
\begin{split}
    \mathbf{y}^*\mathbf{C}^{-1}\mathbf{y} &> 0 \\
    [(\dot{\iota}\omega \mathbf{I} + \mathbf{J}) \, \mathbf{x}]^* \, \mathbf{C}^{-1} (\dot{\iota}\omega \mathbf{I} + \mathbf{J}) \,\mathbf{x} &> 0 \\
    \mathbf{x}^*\left[(\dot{\iota}\omega \mathbf{I} + \mathbf{J})^* \, \mathbf{C}^{-1} (\dot{\iota}\omega \mathbf{I} + \mathbf{J})\right] \mathbf{x} &> 0 \\
    \mathbf{x}^*\left[(-\dot{\iota}\omega \mathbf{I} + \mathbf{J}^\top) \, \mathbf{C}^{-1} (\dot{\iota}\omega \mathbf{I} + \mathbf{J})\right] \mathbf{x} &> 0 \\
    \mathbf{x}^*\left[\mathbf{J}^{\top} {\mathbf{C}}^{-1} \mathbf{J} + \omega^2 {\mathbf{C}}^{-1} + \dot{\iota}\omega (\mathbf{J}^{\top}{\mathbf{C}}^{-1} - {\mathbf{C}}^{-1}\mathbf{J})\right] \mathbf{x} &> 0 \\
\end{split}
\label{eq:matrix_sol_cholesky_pd2}
\end{equation}
Therefore, $\left[\mathbf{J}^{\top} {\mathbf{C}}^{-1} \mathbf{J} + \omega^2 {\mathbf{C}}^{-1} + \dot{\iota}\omega (\mathbf{J}^{\top}{\mathbf{C}}^{-1} - {\mathbf{C}}^{-1}\mathbf{J}) \right]$ is positive definite.

\newpage

Now, we proceed with the determination of rational functions for the auto-spectrum (diagonal elements) and the cross-spectrum (off-diagonal elements) using the general matrix solution, as described by Eq.~\ref{eq:matrix_sol}. In the following sections, we present two distinct approaches to obtaining the rational function solution for $\mathbfcal{S}(\omega)$. The first approach utilizes a recursive algorithm that calculates the coefficient matrices for the numerator and denominator polynomial coefficients. By employing this method, we can obtain the coefficients for both the auto and cross-spectrum of all variables simultaneously. The second approach involves an element-wise solution, which expresses the auto and cross-spectrum in terms of elementary trace operations applied to the submatrices of $\mathbf{J}$. 

\section{Recursive algorithm solutions}
\label{sec:recursive_algorithm}

\subsection{Spectral density solution}
First, we describe the recursive algorithm to calculate the rational function solution for $\mathbfcal{S}(\omega)$. To make our solution compact, we replace the noise covariance matrix $\mathbf{L} \, \mathbf{D} \,\mathbf{L}^{\top}$ with $\mathbf{C}\succcurlyeq 0  \in \R^{n\times n}$. Now, we write the inverse of complex matrices in terms of the corresponding determinants, $\mathrm{det}(\mathbf{X})$, and adjugate matrices, $\mathrm{adj}(\mathbf{X})$,
\begin{equation}
\begin{split}
    \mathbfcal{S}(\omega) &= (\dot{\iota}\omega\I+\J)^{-1} \, \mathbf{C} \, (-\dot{\iota}\omega\I+\J)^{-\top} \\
    &= \frac{\mathrm{adj}(\J+\dot{\iota} \omega\I) \, \mathbf{C} \, \mathrm{adj}^\top(\mathbf{J}-\dot{\iota} \omega\I)}{\mathrm{det}(\J+\dot{\iota} \omega\I)\, \mathrm{det}(\mathbf{J}-\dot{\iota} \omega\I)} \\
    &= \frac{\mathbf{Z}(\omega)}{Q(\omega)}  \label{eq:gen1} 
\end{split}
\end{equation}
Here, $\mathbf{Z}(\omega)$ is a complex polynomial matrix representing the numerator of the solution, and $Q(\omega)$ is an even-powered polynomial of degree $2n$ representing the denominator of the solution. When we express the adjugate of the matrices in terms of the corresponding cofactor matrices, it is evident that $\mathbf{Z}(\omega)$ contains complex polynomials of degree $2n-2$ with even powers of $\omega$ being real and odd powers of $\omega$ being imaginary. For the denominator, we have, 
\begin{equation}
    Q(\omega) = |\mathbf{J}+\dot{\iota} \omega\mathbf{I}| |\mathbf{J}-\dot{\iota} \omega\mathbf{I}| 
\end{equation}
If $\lambda_i$ are the eigenvalues of the matrix $\mathbf{J}$, using Eq.~\ref{eq:det_prod}, we have,
\begin{equation}
    |\mathbf{J} \pm \dot{\iota} \omega| = \prod_{j=1}^n (\lambda_j \pm \dot{\iota}\omega)
\end{equation}
Therefore,
\begin{equation}
\begin{split}
    Q(\omega) &= \prod_{j=1}^n (\lambda_j + \dot{\iota}\omega)(\lambda_j - \dot{\iota}\omega) \\
    &= \prod_{j=1}^n (\lambda_j^2 + \omega^2) \label{eq:recursive0}
\end{split}
\end{equation}
Therefore, we can write the equation as follows,
\begin{equation}
\begin{split}
    \mathbfcal{S}(\omega) &= \frac{\Pb(\omega) + \dot{\iota}\omega \Pb^\prime(\omega)}{Q(\omega)} \\
    &= \frac{\Pb_0 + \Pb_1 \omega^2 +...+ \Pb_{n-1}\omega^{2n-2} + \dot{\iota}\omega \left(\Pb_0^\prime + \Pb_1^\prime \omega^2 +...+ \Pb_{n-2}^\prime \omega^{2n-4} \right)}{q_0 + q_1 \omega^2 +...+q_n\omega^{2n}}\label{eq:recursive2} 
\end{split}
\end{equation}

\begin{theorem} 
\label{theorem:0}
The noise power spectral density matrix of an LTI system is a complex-valued rational function of the form
\begin{equation}
\begin{split}
    \mathbfcal{S}(\omega) &=(\dot{\iota}\omega\I+\J)^{-1} \, \mathbf{C} \, (-\dot{\iota}\omega\I+\J)^{-\top} \\
      &= \frac{\sum\limits_{\alpha=0}^{n-1}\Pb_\alpha \omega^{2\alpha} + \dot{\iota}\omega \sum\limits_{\alpha=0}^{n-2}\Pb_\alpha^\prime \omega^{2\alpha}}{\sum\limits_{\alpha=0}^{n}q_\alpha \omega^{2\alpha}} \label{eq:recursive3}
\end{split}
\end{equation}
The numerator's matrix coefficients are given by the recursive equations,
\begin{equation}
\begin{split}
    \Pb_{\alpha-1}^\prime &= \J \Pb_{\alpha} - \Pb_{\alpha} \J^\top - \J \Pb_{\alpha}^\prime \J^\top \\
    \Pb_{\alpha-1} &= q_{\alpha}\mathbf{C} + \Pb_{\alpha-1}^\prime \J^\top - \J \Pb_{\alpha-1}^\prime - \J \Pb_{\alpha} \J^\top
\end{split}
\label{eq:num-rec-coefficients}
\end{equation}
for $\alpha \in \{1,\dots, n\}$, starting from $\alpha =n$ with $\Pb_{n}=\Pb_{n}^\prime=\mathbf{0}$. The scalar coefficients of the denominator can be recursively calculated using,
\begin{equation}
\begin{split}
    q_{\alpha} = \frac{1}{n-\alpha}\left[\Tr\left(\J \Q_{\alpha-1}^\prime \right) + \Tr\left(\J \Q_\alpha \J^\top \right)\right]
\end{split}
\end{equation}
if $\alpha<n$, and $q_n = 1$. The coefficients $\Q_{\alpha-1}^\prime$ and $\Q_{\alpha}$ are given in turn by the recursive equations,
\begin{equation}
\begin{split}
    \Q_{\alpha-1}^\prime &= \J \Q_{\alpha} - \Q_{\alpha} \J^\top - \J \Q_{\alpha}^\prime \J^\top \\
    \Q_{\alpha-1} &= q_{\alpha} \mathbf{I}+ \Q_{\alpha-1}^\prime \J^\top - \J \Q_{\alpha-1}^\prime - \J \Q_{\alpha} \J^\top
\end{split}
\label{eq:suppl-rec-coefficients}
\end{equation}
for $\alpha \in \{0,1, \dots, n\}$, starting from $\alpha=n$ with $\Q_n = \Q_n^\prime =\mathbf{0}$ and $ \Q_{-1} = \Q_{-1}^\prime =\mathbf{0}$.
\end{theorem}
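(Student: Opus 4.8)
The plan is to adapt the Leverrier--Faddeev strategy of Hou to the two-sided structure of Eq.~\ref{eq:recursive3}. First I would clear the denominator in Eq.~\ref{eq:gen1} to obtain the polynomial matrix identity
\begin{equation}
(\J+\dot{\iota}\omega\I)\,\mathbf{Z}(\omega)\,(\J^\top-\dot{\iota}\omega\I) = Q(\omega)\,\mathbf{C},
\label{eq:plan-fund}
\end{equation}
where $\mathbf{Z}(\omega)=\Pb(\omega)+\dot{\iota}\omega\,\Pb^\prime(\omega)=\mathrm{adj}(\J+\dot{\iota}\omega\I)\,\mathbf{C}\,\mathrm{adj}^\top(\J-\dot{\iota}\omega\I)$ is the numerator and $Q(\omega)=\mathrm{det}(\J+\dot{\iota}\omega\I)\,\mathrm{det}(\J-\dot{\iota}\omega\I)$. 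The structural facts I take as given from the cofactor expansion argued before Eq.~\ref{eq:recursive2} are that $Q$ is even in $\omega$ of degree $2n$ with real coefficients and leading term $q_n=1$, while $\Pb$ carries even powers up to $\omega^{2n-2}$ and $\Pb^\prime$ up to $\omega^{2n-4}$, all with real matrix coefficients. Since $\mathbf{C}=\mathbf{L}\,\mathbf{D}\,\mathbf{L}^\top$ is symmetric, transposing $\mathbf{Z}(\omega)$ and using $\mathrm{adj}(\mathbf{M}^\top)=\mathrm{adj}^\top(\mathbf{M})$ gives $\mathbf{Z}(\omega)^\top=\mathbf{Z}(-\omega)$, which forces each $\Pb_\alpha$ to be symmetric and each $\Pb_\alpha^\prime$ antisymmetric; the same argument with $\mathbf{C}=\I$ makes $\Q_\alpha$ symmetric and $\Q_\alpha^\prime$ antisymmetric. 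These (anti)symmetries will be indispensable later.

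For the numerator recursion I would expand the left-hand side of Eq.~\ref{eq:plan-fund} using $(\dot{\iota}\omega)^2=-\omega^2$ and split it into its purely real (even-in-$\omega$) part and the part carrying a single factor $\dot{\iota}\omega$. Inserting the polynomial ansatz and matching the coefficient of $\omega^{2\alpha}$ in each part yields two coupled relations: the single-$\dot{\iota}\omega$ part gives $\Pb_{\alpha-1}^\prime=\J\Pb_\alpha-\Pb_\alpha\J^\top-\J\Pb_\alpha^\prime\J^\top$, and the real part gives $\Pb_{\alpha-1}=q_\alpha\mathbf{C}+\Pb_{\alpha-1}^\prime\J^\top-\J\Pb_{\alpha-1}^\prime-\J\Pb_\alpha\J^\top$, which are precisely Eq.~\ref{eq:num-rec-coefficients}; the terminal data $\Pb_n=\Pb_n^\prime=\mathbf{0}$ encode the out-of-range coefficients and drive the recursion downward from $\alpha=n$, with $\Pb_{n-1}=\mathbf{C}$ as a consistency check against the leading behavior $\mathbf{Z}\sim\omega^{2n-2}\mathbf{C}$. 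The auxiliary system Eq.~\ref{eq:suppl-rec-coefficients} is the identical computation with $\mathbf{C}$ replaced by $\I$, i.e.\ the coefficients of $\mathrm{adj}(\J+\dot{\iota}\omega\I)\,\mathrm{adj}^\top(\J-\dot{\iota}\omega\I)$, so no separate work is needed to establish it.

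The step I expect to be the main obstacle is the trace formula for $q_\alpha$, because the $q_\alpha$ feed the numerator recursion and must be produced on their own. Here I would apply Jacobi's formula to each factor, $\tfrac{d}{d\omega}\mathrm{det}(\J\pm\dot{\iota}\omega\I)=\pm\dot{\iota}\,\Tr\,\mathrm{adj}(\J\pm\dot{\iota}\omega\I)$, and then use $\mathrm{adj}(\mathbf{M})\mathbf{M}=\mathrm{det}(\mathbf{M})\I$ together with invariance of the trace under transposition to fold both determinant factors back into the auxiliary numerator $\widetilde{\mathbf{Z}}(\omega)=\mathrm{adj}(\J+\dot{\iota}\omega\I)\,\mathrm{adj}^\top(\J-\dot{\iota}\omega\I)$, obtaining
\begin{equation}
\frac{\mathrm{d}Q}{\mathrm{d}\omega}=\dot{\iota}\,\Tr\!\left[\widetilde{\mathbf{Z}}(\omega)\,(\J^\top-\J-2\dot{\iota}\omega\I)\right].
\label{eq:plan-jacobi}
\end{equation}
Expanding $\widetilde{\mathbf{Z}}=\sum_\beta\Q_\beta\omega^{2\beta}+\dot{\iota}\omega\sum_\beta\Q_\beta^\prime\omega^{2\beta}$ and using that $\Q_\beta$ is symmetric while $\Q_\beta^\prime$ and $\J^\top-\J$ are antisymmetric (so that $\Tr(\Q_\beta(\J^\top-\J))=0$, $\Tr(\Q_\beta^\prime)=0$, and $\Tr(\Q_\beta^\prime(\J^\top-\J))=-2\Tr(\J\Q_\beta^\prime)$), Eq.~\ref{eq:plan-jacobi} collapses to $\mathrm{d}Q/\mathrm{d}\omega=2\omega\sum_\beta[\Tr(\Q_\beta)+\Tr(\J\Q_\beta^\prime)]\omega^{2\beta}$. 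Matching this against $\mathrm{d}Q/\mathrm{d}\omega=\sum_\alpha 2\alpha q_\alpha\omega^{2\alpha-1}$ gives $\alpha q_\alpha=\Tr(\Q_{\alpha-1})+\Tr(\J\Q_{\alpha-1}^\prime)$. Independently, taking the trace of the second line of Eq.~\ref{eq:suppl-rec-coefficients} and again invoking the antisymmetry of $\Q_{\alpha-1}^\prime$ yields $\Tr(\Q_{\alpha-1})=n q_\alpha-2\Tr(\J\Q_{\alpha-1}^\prime)-\Tr(\J\Q_\alpha\J^\top)$. Eliminating $\Tr(\Q_{\alpha-1})$ between these two relations cancels the spurious $\Tr(\J\Q_{\alpha-1}^\prime)$ terms and produces exactly $(n-\alpha)q_\alpha=\Tr(\J\Q_{\alpha-1}^\prime)+\Tr(\J\Q_\alpha\J^\top)$, with $q_n=1$ fixed by the leading term of $Q$. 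The two delicate points I would watch throughout are the careful separation of real and imaginary parts when collecting powers of $\dot{\iota}\omega$, and the (anti)symmetry of $\Q_\alpha,\Q_\alpha^\prime$, on which the $1/(n-\alpha)$ normalization entirely depends.
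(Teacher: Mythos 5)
Your proposal is correct. The numerator half is essentially the paper's own argument: clear the denominator, multiply by $(\J+\dot{\iota}\omega\I)$ on the left and $(\J-\dot{\iota}\omega\I)^\top$ on the right, and match coefficients of even and odd powers of $\omega$; this reproduces Eq.~\ref{eq:num-rec-coefficients} exactly, and the $\mathbf{C}\to\I$ specialization for Eq.~\ref{eq:suppl-rec-coefficients} is likewise identical.

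The denominator half, however, takes a genuinely different route. The paper obtains the identity $\Tr\bigl(\Ci\J\mathbf{Z}(\omega)\J^\top\bigr)=\Tr\bigl(\omega^2\Ci\mathbf{Z}(\omega)\bigr)+nQ(\omega)-\omega Q^\prime(\omega)$ by manipulating Laplace transforms of the matrix exponentials $e^{\pm\dot{\iota}t\J}$, proving $\Tr\mathcal{L}\{e^{\dot{\iota}t\J}+e^{-\dot{\iota}t\J}\}=Q^\prime(\omega)/Q(\omega)$ via the eigenvalue decomposition (Theorem~\ref{theorem:1} and Lemmas~\ref{lemma:1}--\ref{lemma:2}); it must temporarily assume $\mathbf{C}\succ 0$ to multiply by $\Ci$, and only afterwards sets $\mathbf{C}=\I$ to remove the inverse. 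You instead differentiate $Q(\omega)=\det(\J+\dot{\iota}\omega\I)\det(\J-\dot{\iota}\omega\I)$ with Jacobi's formula and fold the adjugates back into the twin numerator $\widetilde{\mathbf{Z}}(\omega)$, arriving at $\mathrm{d}Q/\mathrm{d}\omega=\dot{\iota}\Tr[\widetilde{\mathbf{Z}}(\omega)(\J^\top-\J-2\dot{\iota}\omega\I)]$, whence $\alpha q_\alpha=\Tr(\Q_{\alpha-1})+\Tr(\J\Q_{\alpha-1}^\prime)$; combining this with the trace of the $\Q_{\alpha-1}$ recursion (where $\Tr(\Q_{\alpha-1}^\prime\J^\top)=-\Tr(\J\Q_{\alpha-1}^\prime)$ by antisymmetry) yields $(n-\alpha)q_\alpha=\Tr(\J\Q_{\alpha-1}^\prime)+\Tr(\J\Q_\alpha\J^\top)$ — I checked the bookkeeping and it closes correctly, including the $\alpha=n$ consistency $2nq_n=2\Tr(\I)$. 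Both derivations are at heart logarithmic-derivative arguments for $Q^\prime/Q$, but yours works in the $\mathbf{C}=\I$ twin problem from the outset, so it avoids the matrix-exponential/Laplace machinery and the positive-definiteness caveat entirely; the paper's version buys the intermediate identity for general $\mathbf{C}\succ0$ (Eq.~\ref{eq:recursive_q12}), which you never produce but also never need. Your a priori derivation of the (anti)symmetry of $\Pb_\alpha,\Pb_\alpha^\prime,\Q_\alpha,\Q_\alpha^\prime$ from $\mathbf{Z}(\omega)^\top=\mathbf{Z}(-\omega)$ is a clean substitute for the paper's verification from the recursion itself, and it is genuinely load-bearing in your trace computation, so keep it explicit. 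The only cosmetic slip is the phrase about the $\Tr(\J\Q_{\alpha-1}^\prime)$ terms ``cancelling'': they only partially cancel ($-2+1=-1$), which is precisely what leaves the surviving $\Tr(\J\Q_{\alpha-1}^\prime)$ in the final formula.
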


\begin{proof}
First, we establish a recursive solution for the coefficient matrices of the numerator. Subsequently, we derive the second part of the theorem, presenting a set of recursive equations to calculate the coefficients of the polynomial in the denominator.

\subsubsection{Coefficient matrices of the numerator}
Consider Eq.~\ref{eq:recursive3},
\begin{equation}
\begin{split}
    (\dot{\iota}\omega\I+\J)^{-1} \, \mathbf{C} \, (-\dot{\iota}\omega\I+\J)^{-\top} 
    = \frac{\sum\limits_{\alpha=0}^{n-1}\Pb_\alpha \omega^{2\alpha} + \dot{\iota}\omega \sum\limits_{\alpha=0}^{n-2}\Pb_\alpha^\prime \omega^{2\alpha}}{\sum\limits_{\alpha=0}^{n}q_\alpha \omega^{2\alpha}}
\end{split}
\end{equation}
Upon rearrangement and left multiplying both sides with $(\J + \dot{\iota}\omega\I)$ and right multiplying with $(\J - \dot{\iota}\omega\I)^\top$, we get,
\begin{equation}
\begin{split}
     \bigl(q_0 + q_1 \omega^2 +...+q_n\omega^{2n}\bigr) \mathbf{C}  =& \bigl(\J + \dot{\iota}\omega\I\bigr) \biggl(\Pb_0 + \Pb_1 \omega^2 +...+ \Pb_{n-1}\omega^{2n-2} \\
     &+ \dot{\iota}\omega \left(\Pb_0^\prime + \Pb_1^\prime \omega^2 +...+ \Pb_{n-2}^\prime \omega^{2n-4} \right)\biggr) \bigl(\J - \dot{\iota}\omega\I\bigr)^\top \\
     =& \J \Pb_0 \J^\top + \J \Pb_1 \J^\top \omega^2 + ...+ \J \Pb_{n-1} \J^\top \omega^{2n-2} \\
     &-\left(\Pb_0^\prime\J^\top \omega^2 + \Pb_1^\prime\J^\top \omega^4 +...+ \Pb_{n-2}^\prime\J^\top \omega^{2n-2} \right) \\
     &+\left(\J\Pb_0^\prime \omega^2 + \J\Pb_1^\prime \omega^4 + ... + \J\Pb_{n-2}^\prime \omega^{2n-2}\right) \\
     &+ \left(\Pb_0 \omega^2 + \Pb_1 \omega^4 + ... + \Pb_{n-1}\omega^{2n}\right)\\
     &+\dot{\iota}\left(\J \Pb_0^\prime \J^\top \omega + \J \Pb_1^\prime \J^\top \omega^3 + ...+ \J \Pb_{n-2}^\prime \J^\top \omega^{2n-3}\right) \\
     & +\dot{\iota} \left(\Pb_0 \J^\top \omega + \Pb_1 \J^\top \omega^3 + ...+ \Pb_{n-1} \J^\top \omega^{2n-1} \right) \\
     & -\dot{\iota} \left( \J \Pb_0 \omega + \J \Pb_1\omega^3 + ...+ \J \Pb_{n-1}\omega^{2n-1} \right) \\
     & +\dot{\iota}  \left(\Pb_0^\prime \omega^3+ \Pb_1^\prime \omega^5 + ... + \Pb_{n-2}^\prime \omega^{2n-1}\right)
     \label{eq:recursive4}
\end{split}
\end{equation}
To arrive at the recursive formula for $\Pb_\alpha$ and $\Pb_\alpha^\prime$, we compare the like powers of $\omega$ on both sides of the equation, starting from the largest power of $\omega$. 
\begin{equation}
\begin{split}
    \Pb_{n-1} &= q_n\mathbf{C} \\
    \Pb_{n-2}^\prime &= \J \Pb_{n-1} - \Pb_{n-1} \J^\top\\
    \Pb_{n-2} &= q_{n-1}\mathbf{C} + \Pb_{n-2}^\prime \J^\top - \J \Pb_{n-2}^\prime - \J \Pb_{n-1} \J^\top \\ 
    \Pb_{n-3}^\prime &= \J \Pb_{n-2} - \Pb_{n-2} \J^\top - \J \Pb_{n-2}^\prime \J^\top\\
    &\ldots,\\
    \Pb_{0}^\prime &= \J \Pb_{1} - \Pb_{1} \J^\top - \J \Pb_{1}^\prime \J^\top\\
    \Pb_{0} &= q_{1}\mathbf{C} + \Pb_{0}^\prime \J^\top - \J \Pb_{0}^\prime - \J \Pb_{1} \J^\top\\
    \mathbf{0} &= \J \Pb_{0} - \Pb_{0} \J^\top - \J \Pb_{0}^\prime \J^\top\\
    \mathbf{0} &=  q_0 \mathbf{C} - \J \Pb_0 \J^\top
    \label{eq:recursive4}
\end{split}
\end{equation}
We also verify from these expressions that $\Pb_{\alpha}^\prime$ matrices are anti-symmetric whereas, $\Pb_{\alpha}$ matrices are symmetric. Here the last two equations are redundant and can be used to check for the accuracy of the numerical computation. In general for even powers of $\omega$, the coefficients of $\omega^{2\alpha}$ for $0 \leq \alpha \leq n$ are given by,
\begin{equation}
\begin{split}
    \Pb_{\alpha-1} &= q_{\alpha}\mathbf{C} + \Pb_{\alpha-1}^\prime \J^\top - \J \Pb_{\alpha-1}^\prime - \J \Pb_{\alpha} \J^\top
    \label{eq:recursive_Peven}
\end{split}
\end{equation}
Note that $\Pb_{n}=\mathbf{0}$ and $\Pb_{-1}=\mathbf{0}$, therefore the above equation is valid for $\alpha \in \{0, n\}$. Similarly, in general for odd powers of $\omega$, the coefficients of $\omega^{2\alpha+1}$ for $0 \leq \alpha \leq n-1$ are given by,
\begin{equation}
\begin{split}
    \Pb_{\alpha-1}^\prime &= \J \Pb_{\alpha} - \Pb_{\alpha} \J^\top - \J \Pb_{\alpha}^\prime \J^\top
    \label{eq:recursive_Podd}
\end{split}
\end{equation}
Note that $\Pb^\prime_{n-1}=\mathbf{0}$ and $\Pb^\prime_{-1}=\mathbf{0}$, therefore the above equation is still valid for $\alpha \in \{0, n\}$.

\subsubsection{Coefficients of the denominator}
Now, onto the second part of the theorem to recursively calculate $q_\alpha$. Note that $q_n=1$ can be easily deduced from Eq.~\ref{eq:recursive0}. Consider the Laplace transform, $\mathcal{L}\{f(t)\}(s)$, of the derivative of the exponential of the matrix $e^{\dot{\iota} t \J}$. For notational convenience, we replace the complex frequency, $s$, in the Laplace transform with $\omega$ and denote $\mathcal{L}\{f(t)\}(\omega)$ as $\mathcal{L}\{f(t)\}$.
\begin{equation}
    \mathcal{L}\left\{\frac{\mathrm{d} }{\mathrm{d} t} e^{\dot{\iota} t\J}\right\} = \mathcal{L}\left\{\dot{\iota} \J e^{\dot{\iota} t\J}\right\} = \dot{\iota} \J (\omega\I - \dot{\iota} \J)^{-1} 
\end{equation}
Note that we have used the property of the resolvent of a matrix (Eq.~\ref{eq:resolvent_appendix}). Similarly, we have,
\begin{equation}
    \mathcal{L}\left\{\frac{\mathrm{d} }{\mathrm{d} t} e^{-\dot{\iota} t\J}\right\} = \mathcal{L}\left\{-\dot{\iota} \J e^{-\dot{\iota} t\J}\right\} = -\dot{\iota} \J (\omega\I + \dot{\iota} \J)^{-1}
\end{equation}
Now using the property of the Laplace transform of a derivative of a function, the $\mathrm{LHS}$ of the equations above becomes,
\begin{equation}
    \omega \mathcal{L}\left\{e^{\dot{\iota} t\J}\right\} - \I = \dot{\iota} \J (\omega\I - \dot{\iota} \J)^{-1} \label{eq:recursive_laplace1}
\end{equation}
and,
\begin{equation}
    \omega \mathcal{L}\left\{e^{-\dot{\iota} t\J}\right\} - \I = -\dot{\iota} \J (\omega\I + \dot{\iota} \J)^{-1}.
    \label{eq:recursive_laplace2}
\end{equation}
Now consider the following matrix product,
\begin{equation}
    \left(\omega \mathcal{L}\left\{e^{\dot{\iota} t\J}\right\} - \I\right) \mathbf{C} \left(\omega \mathcal{L}\left\{e^{-\dot{\iota} t\J}\right\} - \I\right)^\top
\end{equation}
Using Eq.~\ref{eq:recursive_laplace1} \& \ref{eq:recursive_laplace2}, we can also write this product as,
\begin{equation}
    \left(\omega \mathcal{L}\left\{e^{\dot{\iota} t\J}\right\} - \I\right) \mathbf{C} \left(\omega \mathcal{L}\left\{e^{-\dot{\iota} t\J}\right\} - \I\right)^\top = \left(\dot{\iota} \J (\omega\I - \dot{\iota} \J)^{-1} \right) \mathbf{C} \left(-\dot{\iota} \J (\omega\I + \dot{\iota} \J)^{-1} \right)^\top
\end{equation}
We first consider and simplify the $\mathrm{RHS}$ of the equation. 
\begin{equation}
\begin{split}
    \mathrm{RHS} &= \left(\dot{\iota} \J (\omega\I - \dot{\iota} \J)^{-1} \right) \mathbf{C} \left(-\dot{\iota} \J (\omega\I + \dot{\iota} \J)^{-1} \right)^\top \\
    &= \J \left[\, (\dot{\iota} \omega\I + \J)^{-1} \, \mathbf{C} \, (-\dot{\iota} \omega\I + \J)^{-\top}\right]\, \J^\top
\end{split}
\end{equation}
Identifying the matrix product in the rectangular brackets from Eq.~\ref{eq:recursive3}, we can write the $\mathrm{RHS}$ as,
\begin{equation}
\begin{split}
    \mathrm{RHS} =  \frac{1}{Q(\omega)}\J \left[\sum\limits_{\alpha=0}^{n-1}\Pb_\alpha \omega^{2\alpha} + \dot{\iota}\omega \sum\limits_{\alpha=0}^{n-2}\Pb_\alpha^\prime \omega^{2\alpha} \right]\, \J^\top \label{eq:recursive_RHS}
\end{split}
\end{equation}
Now we simplify the $\mathrm{LHS}$ by first expanding the product and then using the properties of the Laplace transform,
\begin{equation}
\begin{split}
    \mathrm{LHS} &= \left(\omega \mathcal{L}\left\{e^{\dot{\iota} t\J}\right\} - \I\right) \mathbf{C} \left(\omega \mathcal{L}\left\{e^{-\dot{\iota} t\J}\right\} - \I\right)^\top \\
    &= \omega^2 \mathcal{L}\left\{e^{\dot{\iota} t\J}\right\} \, \mathbf{C} \, \mathcal{L}\left\{e^{-\dot{\iota} t\J}\right\}^\top - \omega \mathcal{L}\left\{e^{\dot{\iota} t\J}\right\} \mathbf{C} - \omega \, \mathbf{C}\, \mathcal{L}\left\{e^{-\dot{\iota} t\J}\right\}^\top \, +\mathbf{C}\\
    &= \omega^2 (\omega\I - \dot{\iota} \J)^{-1}\, \mathbf{C}\, (\omega\I + \dot{\iota} \J)^{-\top} - \omega \mathcal{L}\left\{e^{\dot{\iota} t\J}\right\} \mathbf{C} - \omega \, \mathbf{C}\, \mathcal{L}\left\{e^{-\dot{\iota} t\J}\right\}^\top \, +\mathbf{C}\\
    &= \omega^2\left[ (\dot{\iota} \omega\I + \J)^{-1}\, \mathbf{C}\, (-\dot{\iota} \omega\I + \J)^{-\top}\right] - \omega \mathcal{L}\left\{e^{\dot{\iota} t\J}\right\} \mathbf{C} - \omega \, \mathbf{C}\, \mathcal{L}\left\{e^{-\dot{\iota} t\J}\right\}^\top \, +\mathbf{C}
\end{split}
\end{equation}
Identifying the matrix product in the rectangular brackets from Eq.~\ref{eq:recursive3}, we can write the $\mathrm{LHS}$ as,
\begin{equation}
\begin{split}
    \mathrm{LHS} = \frac{\omega^2}{Q(\omega)}\left[ \sum\limits_{\alpha=0}^{n-1}\Pb_\alpha \omega^{2\alpha} + \dot{\iota}\omega \sum\limits_{\alpha=0}^{n-2}\Pb_\alpha^\prime \omega^{2\alpha}\right] - \omega \mathcal{L}\left\{e^{\dot{\iota} t\J}\right\} \mathbf{C} - \omega \, \mathbf{C}\, \mathcal{L}\left\{e^{-\dot{\iota} t\J}\right\}^\top \, +\mathbf{C} \label{eq:recursive_LHS}
\end{split}
\end{equation}
Therefore, using Eq.~\ref{eq:recursive_RHS} \& \ref{eq:recursive_LHS}, we arrive at the following equality,
\begin{equation}
    \frac{1}{Q(\omega)}\J \left[\sum\limits_{\alpha=0}^{n-1}\Pb_\alpha \omega^{2\alpha} + \dot{\iota}\omega \sum\limits_{\alpha=0}^{n-2}\Pb_\alpha^\prime \omega^{2\alpha} \right]\, \J^\top = \frac{\omega^2}{Q(\omega)}\left[ \sum\limits_{\alpha=0}^{n-1}\Pb_\alpha \omega^{2\alpha} + \dot{\iota}\omega \sum\limits_{\alpha=0}^{n-2}\Pb_\alpha^\prime \omega^{2\alpha}\right] - \omega \mathcal{L}\left\{e^{\dot{\iota} t\J}\right\} \mathbf{C} - \omega \, \mathbf{C}\, \mathcal{L}\left\{e^{-\dot{\iota} t\J}\right\}^\top \, +\mathbf{C} \label{eq:recursive_both}
\end{equation}
Since, $\mathbf{C}$ is a covariance matrix, it is positive semi-definite. As we know that the coefficients of the denominator depend only on the matrix $\J$ and not on the covariance matrix $\mathbf{C}$, as seen in Eq.~\ref{eq:recursive0}, the recursive solution of $q$ is valid for any semi-positive definite matrix $\mathbf{C}$. We first derive the recursive solution for $q$ by making the assumption that $\mathbf{C}\succ 0$, and therefore $\Ci$ exists. Multiplying both sides by $\Ci$ in Eq.~\ref{eq:recursive_both}, and taking the trace, we get,
\begin{equation}
\begin{split}
    \Tr\left(\frac{\Ci \J}{Q(\omega)}  \left[\sum\limits_{\alpha=0}^{n-1}\Pb_\alpha \omega^{2\alpha} + \dot{\iota}\omega \sum\limits_{\alpha=0}^{n-2}\Pb_\alpha^\prime \omega^{2\alpha} \right]\, \J^\top \right) =& \Tr\left(\frac{\omega^2 \Ci}{Q(\omega)}\left[ \sum\limits_{\alpha=0}^{n-1}\Pb_\alpha \omega^{2\alpha} + \dot{\iota}\omega \sum\limits_{\alpha=0}^{n-2}\Pb_\alpha^\prime \omega^{2\alpha}\right] \right)\\& - \omega\Tr\left( \Ci\mathcal{L}\left\{e^{\dot{\iota} t\J}\right\} \mathbf{C} +  \mathcal{L}\left\{e^{-\dot{\iota} t\J}\right\}^\top\right) +\Tr\left(\mathbf{I} \right)\label{eq:recursive_both1}
\end{split}
\end{equation}
Using the properties of trace, $\Tr(\mathbf{A}^\top)=\Tr(\mathbf{A})$ and $\Tr(\Ci\mathbf{A}\mathbf{C})=\Tr(\mathbf{A})$, we can simplify the equation above to,
\begin{equation}
\begin{split}
    \Tr\left(\Ci \J  \left[\sum\limits_{\alpha=0}^{n-1}\Pb_\alpha \omega^{2\alpha} + \dot{\iota} \sum\limits_{\alpha=0}^{n-2}\Pb_\alpha^\prime \omega^{2\alpha+1} \right] \J^\top \right) =& \Tr\left( \Ci\left[ \sum\limits_{\alpha=0}^{n-1}\Pb_\alpha \omega^{2\alpha+2} + \dot{\iota} \sum\limits_{\alpha=0}^{n-2}\Pb_\alpha^\prime \omega^{2\alpha+3}\right] \right)\\& - \omega Q(\omega)\Tr\left( \mathcal{L}\left\{e^{\dot{\iota} t\J}\right\} +  \mathcal{L}\left\{e^{-\dot{\iota} t\J}\right\}\right) +Q(\omega)n \label{eq:recursive_both2}
\end{split}
\end{equation}
Using Theorem~\ref{theorem:1}, we find,
\begin{equation}
    \Tr\left(\Ci \J  \left[\sum\limits_{\alpha=0}^{n-1}\Pb_\alpha \omega^{2\alpha} + \dot{\iota} \sum\limits_{\alpha=0}^{n-2}\Pb_\alpha^\prime \omega^{2\alpha+1} \right] \J^\top \right) = \Tr\left( \Ci\left[ \sum\limits_{\alpha=0}^{n-1}\Pb_\alpha \omega^{2\alpha+2} + \dot{\iota} \sum\limits_{\alpha=0}^{n-2}\Pb_\alpha^\prime \omega^{2\alpha+3}\right] \right) - \omega Q^\prime(\omega)+Q(\omega)n \label{eq:recursive_both3s}
\end{equation}
Finally, expanding the polynomial corresponding to the denominator, we get,
\begin{equation}
    \Tr\left(\Ci \J  \left[\sum\limits_{\alpha=0}^{n-1}\Pb_\alpha \omega^{2\alpha} + \dot{\iota} \sum\limits_{\alpha=0}^{n-2}\Pb_\alpha^\prime \omega^{2\alpha+1} \right] \J^\top \right) = \Tr\left( \Ci\left[ \sum\limits_{\alpha=0}^{n-1}\Pb_\alpha \omega^{2\alpha+2} + \dot{\iota} \sum\limits_{\alpha=0}^{n-2}\Pb_\alpha^\prime \omega^{2\alpha+3}\right] \right) + \sum\limits_{\alpha=0}^{n} (n-2\alpha) q_\alpha \omega^{2\alpha} \label{eq:recursive_both3s}
\end{equation}
Now, we compare the coefficients of different powers of $\omega$. First, for odd powers of $\omega$, we consider the coefficient of $\omega^{2\alpha+1}$ for $0 \leq \alpha \leq n-1 $. We get the following equations,
\begin{equation}
\begin{split}
    \Tr\left(\Ci \Pb^\prime_{\alpha-1}\right) &= \Tr\left(\Ci \J \Pb^\prime_{\alpha} \J^\top\right)
    \label{eq:recursive_q0}
\end{split}
\end{equation}
Note that $\Pb^\prime_{n-1}=\mathbf{0}$ and $\Pb^\prime_{-1}=\mathbf{0}$, therefore the above equation is still valid for $\alpha \in \{0, n-1\}$. Consider the $\mathrm{LHS}$ of the equation above,
\begin{equation}
\begin{split}
    \mathrm{LHS} &= \Tr\left(\Ci \Pb^\prime_{\alpha-1}\right) \\
    &= \Tr\left(\left(\Ci \Pb^\prime_{\alpha-1}\right)^\top\right) \\
    &= \Tr\left( \left(\Pb^\prime_{\alpha-1}\right)^\top \left(\Ci\right)^{\top} \right) \\
    &= \Tr\left( -\Pb^\prime_{\alpha-1} \Ci \right) \\
    &= -\Tr\left( \Ci \Pb^\prime_{\alpha-1}  \right) \\
    &= - \mathrm{LHS}
\end{split}
\end{equation}
Therefore, $\mathrm{LHS}=0$. Note that, we have used the following facts: $\Pb^\prime_{\alpha-1}$ is antisymmetric, and $\Ci$ is symmetric. Similarly, we can prove that $\mathrm{RHS}=0$. The odd powers of $\omega$ in Eq.~\ref{eq:recursive_both3s} don't give us an insight into the coefficients of the denominator. Therefore, we compare coefficients of even powers of $\omega$. Upon comparing the coefficients of $\omega^{2\alpha}$ for $0 \leq \alpha < n$, we get the following equation,
\begin{equation}
\begin{split}
    0 = \Tr\left(\Ci \Pb_{\alpha-1} \right)  + (n-2\alpha) q_\alpha - \Tr\left(\Ci \J \Pb_\alpha \J^\top \right)
    \label{eq:recursive_q10}
\end{split}
\end{equation}
From Eq.~\ref{eq:recursive_Peven}, we know that $\Pb_{\alpha-1}$ depends on $q_\alpha$, therefore, solving Eq.~\ref{eq:recursive_q10} directly for $q_\alpha$ does not lead to a recursive solution. Thus, we substitute the expression for $\Pb_{\alpha-1}$ from Eq.~\ref{eq:recursive_Peven} to get,
\begin{equation}
\begin{split}
    0 &= \Tr\left(\Ci \left(q_{\alpha}\mathbf{C} + \Pb_{\alpha-1}^\prime \J^\top - \J \Pb_{\alpha-1}^\prime - \J \Pb_{\alpha} \J^\top \right)\right)  + (n-2\alpha) q_\alpha - \Tr\left(\Ci \J \Pb_\alpha \J^\top \right) \\
    0 &= n q_\alpha + \Tr\left( \Ci \Pb_{\alpha-1}^\prime \J^\top \right) - \Tr\left(\Ci \J \Pb_{\alpha-1}^\prime \right) + (n-2\alpha) q_\alpha - 2 \Tr\left(\Ci \J \Pb_\alpha \J^\top \right)
    \label{eq:recursive_q11}
\end{split}
\end{equation}
Solving the equation for $q_\alpha$,
\begin{equation}
\begin{split}
    q_\alpha &= \frac{1}{n-\alpha}\left[\frac{\Tr\left(\Ci \J \Pb_{\alpha-1}^\prime \right) - \Tr\left( \Ci \Pb_{\alpha-1}^\prime \J^\top \right) }{2} + \Tr\left(\Ci \J \Pb_\alpha \J^\top \right)\right] \\
    &= \frac{1}{n-\alpha}\left[\frac{\Tr\left(\Ci \J \Pb_{\alpha-1}^\prime \right) - \Tr\left( \J  \left(\Pb_{\alpha-1}^\prime\right)^\top \left(\Ci\right)^\top \right) }{2} + \Tr\left(\Ci \J \Pb_\alpha \J^\top \right)\right] \\
    &= \frac{1}{n-\alpha}\left[\frac{\Tr\left(\Ci \J \Pb_{\alpha-1}^\prime \right) - \Tr\left(-\Ci \J \Pb_{\alpha-1}^\prime \right) }{2} + \Tr\left(\Ci \J \Pb_\alpha \J^\top \right)\right] \\
    &= \frac{1}{n-\alpha}\left[\Tr\left(\Ci \J \Pb_{\alpha-1}^\prime \right) + \Tr\left(\Ci \J \Pb_\alpha \J^\top \right)\right]
    \label{eq:recursive_q12}
\end{split}
\end{equation}
Since the solution relies on the inverse of the noise covariance matrix $\mathbf{C}$, it might lead to the accumulation of errors in the recursive solution, especially when the dimensionality of the system is large. We also made the assumption that $\mathbf{C}$ was positive definite to be able to calculate the inverse, but that might not be the case for any general stochastic system. Since the solution is valid for any positive semi-definite matrix $\mathbf{C}$, to circumvent the problems above, we choose $\mathbf{C}=\mathbf{I}$, such that we have,
\begin{equation}
\begin{split}
    q_\alpha = \frac{1}{n-\alpha}\left[\Tr\left(\J \Q_{\alpha-1}^\prime \right) + \Tr\left(\J \Q_\alpha \J^\top \right)\right]
    \label{eq:recursive_q13}
\end{split}
\end{equation}
for $0 \leq \alpha < n$ and $\Q_{-1}^\prime = \mathbf{0}$. Here, the matrices $\Q_{\alpha-1}^\prime$ and $\Q_{\alpha}$ can be interpreted as the coefficient matrices of the numerator of the twin problem given by,
\begin{equation}
\begin{split}
    \mathbfcal{S}(\omega) &= (\dot{\iota}\omega\I+\J)^{-1} \, (-\dot{\iota}\omega\I+\J)^{-\top} \\
    &= \frac{\Q(\omega) + \dot{\iota}\omega \Q^\prime(\omega)}{Q(\omega)} \\
    &= \frac{\Q_0 + \Q_1 \omega^2 +...+ \Q_{n-1}\omega^{2n-2} + \dot{\iota}\omega \left(\Q_0^\prime + \Q_1^\prime \omega^2 +...+ \Q_{n-2}^\prime \omega^{2n-4} \right)}{q_0 + q_1 \omega^2 +...+q_n\omega^{2n}}\label{eq:recursive14} 
\end{split}
\end{equation}
Therefore, using Eq.~\ref{eq:recursive_Peven} \& \ref{eq:recursive_Podd}, we can compute the coefficient matrices $\Q_{\alpha-1}^\prime$ and $\Q_{\alpha}$ recursively,
\begin{equation}
\begin{split}
    \Q_{\alpha-1}^\prime &= \J \Q_{\alpha} - \Q_{\alpha} \J^\top - \J \Q_{\alpha}^\prime \J^\top \\
    \Q_{\alpha-1} &= q_{\alpha} \mathbf{I}+ \Q_{\alpha-1}^\prime \J^\top - \J \Q_{\alpha-1}^\prime - \J \Q_{\alpha} \J^\top 
    \label{eq:recursive_Q_num}
\end{split}
\end{equation}
by initiating the recursion at $\alpha=n$ and iteratively working backwards until $\alpha=1$.
\end{proof}

\begin{theorem} 
\label{theorem:1}
Let $\Tr(\mathbf{J})$ denote the trace of the matrix $\mathbf{J}$, $\mathcal{L}\{f(t)\}$ denote the Laplace transform of $f(t)$ with the complex variable $\omega$ and $Q^\prime(\omega)$ be the derivative of the denominator $Q(\omega)$ with respect to $\omega$, then we have,
\begin{equation}
    \Tr\left(\mathcal{L}\left\{e^{\dot{\iota} t \J} + e^{-\dot{\iota} t \J} \right\}\right) = \frac{Q^\prime(\omega)}{Q(\omega)}.
\end{equation}
\end{theorem}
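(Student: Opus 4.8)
The plan is to reduce the left-hand side to the logarithmic derivative of $Q(\omega)$ by combining the resolvent form of the matrix exponential with Jacobi's formula for the derivative of a determinant. First I would identify the two Laplace transforms as resolvents of $\J$. This is already in hand from the main derivation: rearranging Eq.~\ref{eq:recursive_laplace1} and Eq.~\ref{eq:recursive_laplace2} gives $\mathcal{L}\{e^{\dot{\iota} t\J}\} = (\omega\I - \dot{\iota}\J)^{-1}$ and $\mathcal{L}\{e^{-\dot{\iota} t\J}\} = (\omega\I + \dot{\iota}\J)^{-1}$, so the quantity to be evaluated is $\Tr\bigl((\omega\I - \dot{\iota}\J)^{-1}\bigr) + \Tr\bigl((\omega\I + \dot{\iota}\J)^{-1}\bigr)$.

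Next I would apply Jacobi's formula in the form $\frac{\mathrm{d}}{\mathrm{d}\omega}\det(\omega\I - \mathbf{A}) = \det(\omega\I - \mathbf{A})\,\Tr\bigl((\omega\I - \mathbf{A})^{-1}\bigr)$, valid wherever the matrix is invertible (guaranteed here since $\J$ is Hurwitz and $\omega$ is real). Taking $\mathbf{A} = \dot{\iota}\J$ and $\mathbf{A} = -\dot{\iota}\J$ turns each trace into a logarithmic derivative: writing $Q_+(\omega) = \det(\omega\I - \dot{\iota}\J)$ and $Q_-(\omega) = \det(\omega\I + \dot{\iota}\J)$, the expression becomes $Q_+^\prime(\omega)/Q_+(\omega) + Q_-^\prime(\omega)/Q_-(\omega)$.

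The key algebraic step is then to show $Q_+(\omega)\,Q_-(\omega) = Q(\omega)$. I would factor $\omega\I - \dot{\iota}\J = -\dot{\iota}(\J + \dot{\iota}\omega\I)$ and $\omega\I + \dot{\iota}\J = \dot{\iota}(\J - \dot{\iota}\omega\I)$, so that $Q_+(\omega) = (-\dot{\iota})^n\det(\J + \dot{\iota}\omega\I)$ and $Q_-(\omega) = (\dot{\iota})^n\det(\J - \dot{\iota}\omega\I)$. On multiplying, the scalar prefactor $(-\dot{\iota})^n(\dot{\iota})^n = 1$, leaving exactly $\det(\J + \dot{\iota}\omega\I)\det(\J - \dot{\iota}\omega\I)$, which is $Q(\omega)$ as defined in Eq.~\ref{eq:gen1}. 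Finally, combining the two fractions over this common denominator and using the product rule, $Q_+^\prime/Q_+ + Q_-^\prime/Q_- = (Q_+ Q_-)^\prime/(Q_+ Q_-) = Q^\prime(\omega)/Q(\omega)$, which is the claim.

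The main obstacle here is essentially bookkeeping rather than conceptual: I must track the scalar factors $(\pm\dot{\iota})^n$ through the determinant factorization and confirm that they cancel exactly to reproduce $Q(\omega)$ without a stray sign or phase, and I must make sure the sign convention of the Laplace transform is consistent with the resolvent identities used earlier in the proof. Once those are pinned down, the remaining manipulations are the standard trace-resolvent and logarithmic-derivative identities.
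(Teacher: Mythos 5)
Your proposal is correct, but it takes a genuinely different route from the paper's proof. The paper expands $e^{\pm\dot{\iota} t\J}$ as power series, uses Lemma~\ref{lemma:1} to reduce the traces of matrix powers to sums over the eigenvalues $\lambda_j$ of $\J$, resums to get $\sum_j (e^{\dot{\iota} t\lambda_j}+e^{-\dot{\iota} t\lambda_j})$, applies the scalar Laplace transform termwise to obtain the partial-fraction form $\sum_j 2\omega/(\omega^2+\lambda_j^2)$, and finally invokes Lemma~\ref{lemma:2} (the logarithmic derivative of the eigenvalue factorization $Q(\omega)=\prod_j(\lambda_j^2+\omega^2)$ from Eq.~\ref{eq:denm_expansion_0}). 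You instead stay entirely at the matrix level: you read off $\mathcal{L}\{e^{\pm\dot{\iota} t\J}\}=(\omega\I\mp\dot{\iota}\J)^{-1}$ from the resolvent identity, convert each trace to a logarithmic derivative via Jacobi's formula, and verify that $\det(\omega\I-\dot{\iota}\J)\det(\omega\I+\dot{\iota}\J)=Q(\omega)$ by tracking the $(\mp\dot{\iota})^n$ prefactors (which indeed cancel, since $(-\dot{\iota})^n(\dot{\iota})^n=1$; equivalently both products equal $\det(\J^2+\omega^2\I)$). Your argument is somewhat cleaner in that it sidesteps the termwise interchange of trace, infinite sum, and Laplace transform, and it never needs the eigenvalues or Lemmas~\ref{lemma:1}--\ref{lemma:2} at all; the paper's route, in exchange, makes the partial-fraction structure of $Q^\prime/Q$ over the eigenvalues explicit, which is reused conceptually elsewhere in the derivation. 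Your observation that the Hurwitz property of $\J$ guarantees invertibility of $\omega\I\mp\dot{\iota}\J$ for real $\omega$ is the right justification for applying Jacobi's formula, and the final product-rule step is immediate.
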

\begin{proof}
Consider $\mathrm{LHS}$,
\begin{equation}
\begin{split}
    \Tr\left(\mathcal{L}\left\{e^{\dot{\iota} t \J} + e^{-\dot{\iota} t \J} \right\}\right) 
    &= \mathcal{L}\left\{\Tr\left(e^{\dot{\iota} t \J} + e^{-\dot{\iota} t \J} \right)\right\} \\
    &= \mathcal{L}\left\{\Tr\left( \sum_{r=0}^\infty \frac{t^r (\dot{\iota}\J)^r}{r!} +  \sum_{r=0}^\infty \frac{t^r (-\dot{\iota}\J)^r}{r!}\right)\right\} \\
    &= \mathcal{L}\left\{ \sum_{r=0}^\infty \Tr\left(\frac{t^r (\dot{\iota}\J)^r}{r!} \right)+  \sum_{r=0}^\infty \Tr\left(\frac{t^r (-\dot{\iota}\J)^r}{r!} \right) \right\}
\end{split}
\end{equation}
If $\lambda_j$ are the eigenvalues of $\J$, using Lemma~\ref{lemma:1} we can write the equation above as,
\begin{equation}
\begin{split}
    \Tr\left(\mathcal{L}\left\{e^{\dot{\iota} t \J} + e^{-\dot{\iota} t \J} \right\}\right) 
    &= \mathcal{L}\left\{ \sum_{r=0}^\infty \sum_{j=1}^n \frac{t^r (\dot{\iota} \lambda_j)^r}{r!} +  \sum_{r=0}^\infty \sum_{j=1}^n \frac{t^r (-\dot{\iota} \lambda_j)^r}{r!} \right\} \\
    &= \mathcal{L}\left\{  \sum_{j=1}^n e^{\dot{\iota} t \lambda_j} +  \sum_{j=1}^n e^{-\dot{\iota} t \lambda_j} \right\} \\
    &= \sum_{j=1}^n \frac{1}{\omega - \dot{\iota} \lambda_j} +  \sum_{j=1}^n \frac{1}{\omega + \dot{\iota} \lambda_j}  \\
    &= \sum_{j=1}^n \frac{2\omega}{\omega^2 + \lambda_j^2}
\end{split}
\end{equation}
Finally using Lemma~\ref{lemma:2}, we have,
\begin{equation}
\begin{split}
    \Tr\left(\mathcal{L}\left\{e^{\dot{\iota} t \J} + e^{-\dot{\iota} t \J} \right\}\right) 
    &= \frac{Q^\prime(\omega)}{Q(\omega)} \\
    &= \mathrm{RHS}
\end{split}
\end{equation}
\end{proof}

\begin{lemma}
\label{lemma:1}
For $\beta \in \mathbb{C}$, $k \in \mathbb{W}$ and $\mathbf{A} \in \R^{n\times n}$ with eigenvalues $\lambda_i$
\begin{equation}
    \Tr\left(\left(\beta \mathbf{A}\right)^k\right) = \sum_{i=1}^n \left(\beta\lambda_i\right)^k
\end{equation}
\end{lemma}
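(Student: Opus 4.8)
The plan is to reduce the claim to two standard linear-algebra facts---that the trace of a matrix equals the sum of its eigenvalues, and that raising a matrix to a power raises its eigenvalues to that power---while being careful to cover the possibly non-diagonalizable case. Since $\mathbf{A}$ is real but its eigenvalues $\lambda_i$ may be complex, I would work throughout in $\mathbb{C}^{n\times n}$.

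First I would invoke the Schur (unitary) triangularization: there exists an invertible $\mathbf{U}$ with $\mathbf{U}^{-1}\mathbf{A}\mathbf{U} = \mathbf{T}$ upper triangular and carrying the eigenvalues $\lambda_1,\dots,\lambda_n$ on its diagonal. Then $\beta\mathbf{A} = \mathbf{U}(\beta\mathbf{T})\mathbf{U}^{-1}$, and because the conjugating factors telescope across products (the intermediate $\mathbf{U}^{-1}\mathbf{U}=\mathbf{I}$ cancel), one has $(\beta\mathbf{A})^k = \mathbf{U}(\beta\mathbf{T})^k\mathbf{U}^{-1}$ for every $k\in\mathbb{W}$.

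Next I would use that upper-triangular matrices are closed under multiplication and that the diagonal of a product of triangular matrices is the entrywise product of their diagonals; hence $\beta\mathbf{T}$ is upper triangular with diagonal entries $\beta\lambda_i$, and $(\beta\mathbf{T})^k$ is upper triangular with diagonal entries $(\beta\lambda_i)^k$. Finally, since the trace is invariant under similarity and equals the sum of the diagonal entries of a triangular matrix,
\begin{equation}
    \Tr\left((\beta\mathbf{A})^k\right) = \Tr\left((\beta\mathbf{T})^k\right) = \sum_{i=1}^n (\beta\lambda_i)^k,
\end{equation}
which is the assertion.

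The computation is entirely routine, so there is no genuine obstacle; the only point demanding care is to avoid presuming that $\mathbf{A}$ is diagonalizable, which is precisely why triangularization rather than an eigendecomposition is the appropriate tool. I would also treat the edge case $k=0$ explicitly---there $(\beta\mathbf{A})^0=\mathbf{I}$ gives $\Tr\mathbf{I}=n=\sum_{i=1}^n(\beta\lambda_i)^0$ under the convention $0^0=1$---so that the identity holds uniformly over $k\in\mathbb{W}$ as stated.
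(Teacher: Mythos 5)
Your proof is correct and follows essentially the same route as the paper's: both rest on the facts that the eigenvalues of $(\beta\mathbf{A})^k$ are $(\beta\lambda_i)^k$ and that the trace is the sum of the eigenvalues. The only difference is that you supply the Schur triangularization to justify the spectral mapping step for possibly non-diagonalizable $\mathbf{A}$ (and check $k=0$), details the paper simply asserts.
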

\begin{proof}
Let $\mathbf{A}$ be a square matrix of size $n\times n$ with eigenvalues $\lambda_1, \lambda_2, \ldots, \lambda_n$. The eigenvalues of $\beta\mathbf{A}$ are $\beta\lambda_1, \beta\lambda_2, \ldots, \beta\lambda_n$. Therefore, the eigenvalues of $(\beta\mathbf{A})^k$ are, $(\beta\lambda_1)^k, (\beta\lambda_2)^k, \ldots, (\beta\lambda_n)^k$. Finally, we use the fact that the trace of a given matrix is the sum of its eigenvalues, therefore we have,
\begin{equation}
    \Tr\left(\left(\beta \mathbf{A}\right)^k\right) = \sum_{i=1}^n \left(\beta\lambda_i\right)^k
\end{equation}
\end{proof}

\begin{lemma}
\label{lemma:2}
If $Q(\omega)=\prod_{j=1}^n (\lambda_j^2 + \omega^2)$, then
\begin{equation}
    \sum_{j=1}^n \frac{2\omega}{\omega^2 + \lambda_j^2} = \frac{Q^\prime(\omega)}{Q(\omega)}
\end{equation}
\end{lemma}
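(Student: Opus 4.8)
The plan is to recognize the right-hand side as a logarithmic derivative and exploit the fact that the logarithm of a product is a sum. Since $Q(\omega)=\prod_{j=1}^n(\lambda_j^2+\omega^2)$ is a product of factors, its logarithmic derivative $Q^\prime(\omega)/Q(\omega)$ should decompose term by term into a sum over $j$, and each term should reduce to exactly $2\omega/(\omega^2+\lambda_j^2)$.

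Concretely, I would first write $\log Q(\omega)=\sum_{j=1}^n \log(\lambda_j^2+\omega^2)$, valid wherever $Q$ does not vanish. Differentiating both sides with respect to $\omega$ and using $\tfrac{d}{d\omega}\log Q(\omega)=Q^\prime(\omega)/Q(\omega)$ together with $\tfrac{d}{d\omega}\log(\lambda_j^2+\omega^2)=2\omega/(\lambda_j^2+\omega^2)$ immediately gives the claim. Equivalently, and perhaps cleaner to present without logarithms, I would apply the product rule directly: differentiating $Q(\omega)=\prod_{j=1}^n(\lambda_j^2+\omega^2)$ yields $Q^\prime(\omega)=\sum_{j=1}^n 2\omega\prod_{k\neq j}(\lambda_k^2+\omega^2)$, and dividing this by $Q(\omega)=\prod_{k=1}^n(\lambda_k^2+\omega^2)$ cancels all factors except the $j$-th in each summand, leaving $\sum_{j=1}^n 2\omega/(\lambda_j^2+\omega^2)$.

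There is essentially no hard step here; the result is the standard logarithmic-derivative identity for a finite product. The only point requiring a word of care is the domain: the manipulation presumes $Q(\omega)\neq 0$, i.e. $\omega^2\neq-\lambda_j^2$ for every $j$, so that the quotients are well defined. Since $\J$ is taken to be Hurwitz with $\mathrm{Re}(\lambda_j)<0$, and in the application $\omega$ is real, the denominators $\lambda_j^2+\omega^2$ do not vanish on the relevant domain, so the identity holds as an equality of rational functions and the proof is complete.
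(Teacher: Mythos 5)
Your proposal is correct and follows essentially the same route as the paper, which likewise takes $\log Q(\omega)=\sum_j\log(\lambda_j^2+\omega^2)$ and differentiates. Your additional product-rule variant and the remark on where $Q$ is nonvanishing are fine but not needed beyond what the paper does.
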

\begin{proof}
Consider $\log Q(\omega)$,
\begin{equation}
\begin{split}
    \log Q(\omega) &= \log \prod_{j=1}^n (\lambda_j^2 + \omega^2) \\
    &= \sum_{j=1}^n \log (\lambda_j^2 + \omega^2)
\end{split}
\end{equation}
Taking the derivative w.r.t. $\omega$ on both sides to get,
\begin{equation}
\begin{split}
    \frac{Q^\prime(\omega)}{Q(\omega)} &= \sum_{j=1}^n \frac{2\omega}{\omega^2 + \lambda_j^2}
\end{split}
\end{equation}
\end{proof}

\newpage
\begin{figure*}[h]
    \centering
    \includegraphics[width=0.9\linewidth]{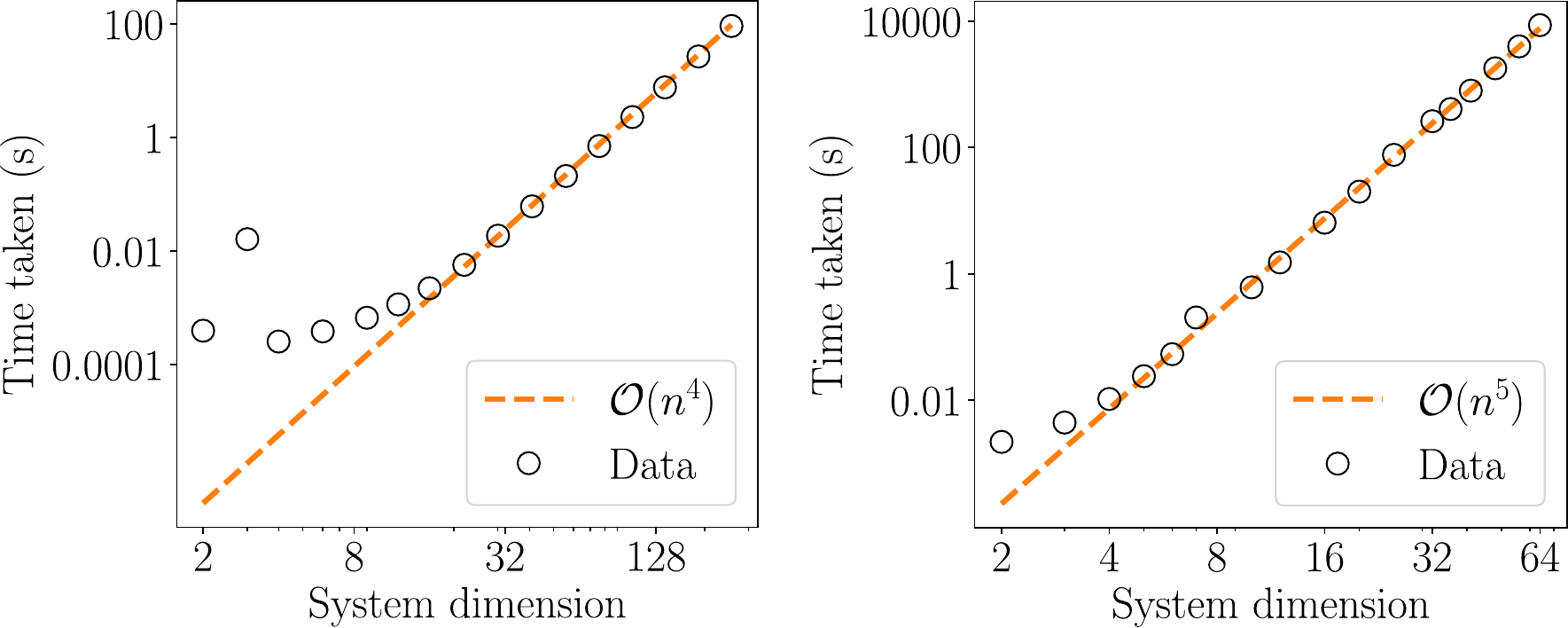}
    \caption{\textbf{Time complexity of the recursive algorithm.} The time complexity of finding the coefficient matrices of the numerator and the coefficients of the polynomials in the denominator using, \textbf{Left:} Fixed precision computations ($\mathcal{O}(n^4)$) and \textbf{Right:} Rational computations ($\mathcal{O}(n^5)$).}
    \label{fig:tc}
\end{figure*}

\newpage

\subsection{Auxiliary spectral matrix}
\begin{theorem} 
\label{theorem:0_g}
Let $\G$ be a real square matrix, $\Y$ be a positive definite matrix, and $\mathbfcal{T}(\omega)$ be defined as follows,
\begin{equation}
\begin{split}
    \mathbfcal{T}(\omega) &= (\omega\I-\G)^{-1} \, \Y \, (\omega\I-\G^\top)^{-1} \\
    &= \frac{\sum\limits_{\alpha=0}^{2n-2}\Pb_\alpha \omega^{\alpha} }{\sum\limits_{\alpha=0}^{2n}q_\alpha \omega^{\alpha}} \label{eq:rational_function_g}
\end{split}
\end{equation}
The numerator's matrix coefficients are given by the recursive equations,
\begin{equation}
\begin{split}
    \Pb_{\alpha-1} = q_{\alpha+1} \Y + \G \Pb_{\alpha} + \Pb_{\alpha} \G^\top -\G \Pb_{\alpha+1} \G^\top \\
\end{split}
\label{eq:num-rec-coefficients}
\end{equation}
for $\alpha \in \{1,\dots, 2n-1\}$, starting from $\alpha = 2n-1$ with $\Pb_{2n}=\Pb_{2n-1}=\mathbf{0}$. The scalar coefficients of the denominator can be recursively calculated using,
\begin{equation}
\begin{split}
    q_\alpha = \frac{2}{2n - \alpha}\left[\Tr\left(\Y^{-1} \G \Pb_\alpha \G^\top \right) - \Tr\left(\Y^{-1}\G \Pb_{\alpha-1} \right)\right]
\end{split}
\end{equation}
for $\alpha \in \{1,\dots, 2n-1\}$, and $q_{2n} = 1$.
\end{theorem}

\begin{proof}
We follow an approach similar to the one presented in Section~\ref{sec:recursive_algorithm}. We can write the spectrum as the following rational function,
\begin{equation}
\begin{split}
    \mathbfcal{T}(\omega) &= (\omega\I-\G)^{-1} \, \Y \, (\omega\I-\G^\top)^{-1} \\
    &= \frac{\mathrm{adj}(\omega\I-\G) \, \Y \, \mathrm{adj}(\omega\I-\G^\top)}{\mathrm{det}(\omega\I-\G)\, \mathrm{det}(\omega\I-\G^\top)} \\
    &= \frac{\sum\limits_{\alpha=0}^{2n-2}\Pb_\alpha \omega^{\alpha} }{\sum\limits_{\alpha=0}^{2n}q_\alpha \omega^{\alpha}} \label{eq:rational_function_g}
\end{split}
\end{equation}
Upon rearrangement and left multiplying both sides with $(\omega\I-\G)$ and right multiplying with $(\omega\I-\G^\top)$, we get,
\begin{equation}
     {\sum\limits_{\alpha=0}^{2n}q_\alpha \omega^{\alpha}} \Y = \left(\omega\I-\G\right) \left(\sum\limits_{\alpha=0}^{2n-2}\Pb_\alpha \omega^{\alpha}\right) \left(\omega\I-\G^\top\right)
\end{equation}
Expanding the series on the left and right gives us,
\begin{equation}
\begin{split}
     \bigl(q_0 + q_1 \omega + q_2 \omega^2 + ...+q_{2n}\omega^{2n}\bigr) \mathbf{Y}  =& \left(\omega\I-\G\right) \biggl(\Pb_0 + \Pb_1 \omega +...+ \Pb_{2n-2}\omega^{2n-2} \biggr) \left(\omega\I-\G^\top\right) \\
     =& \G \Pb_0 \G^\top + \G \Pb_1 \G^\top \omega + ...+ \G \Pb_{2n-2} \G^\top \omega^{2n-2} \\
     &-\left(\Pb_0 \G^\top \omega + \Pb_1 \G^\top \omega^2 +...+ \Pb_{2n-2}\G^\top \omega^{2n-1} \right) \\
     &-\left(\G\Pb_0 \omega + \G\Pb_1 \omega^2 + ... + \G\Pb_{2n-2} \omega^{2n-1}\right) \\
     &+ \Pb_0 \omega^2 + \Pb_1 \omega^3 + ... + \Pb_{2n-2}\omega^{2n}
\end{split}
\end{equation}
Comparing the coefficients of $\omega^{\alpha+1}$ gives us the following recursive solution for $\Pb_\alpha$ (the matrix with the coefficients of the numerator),
\begin{equation}
    \Pb_{\alpha-1} = q_{\alpha+1} \Y + \G \Pb_{\alpha} + \Pb_{\alpha} \G^\top -\G \Pb_{\alpha+1} \G^\top \label{eq:recurrence_g}
\end{equation}
To find a recursive solution for the coefficients of the denominator, we consider the Laplace transform of the following matrix and use the property of the resolvent of a matrix (Eq.~\ref{eq:resolvent_appendix}) to find,
\begin{equation}
\begin{split}
    \mathcal{L}\left\{\frac{\mathrm{d} }{\mathrm{d} t} e^{t\G}\right\} &= \mathcal{L}\left\{\G e^{t\G}\right\} \\
    &= \G (\omega\I - \G)^{-1} 
\end{split}
\end{equation}
Then, using the property of the Laplace transform of the derivative of a function, we can write the $\mathrm{LHS}$ of the equation above as,
\begin{equation}
    \omega \mathcal{L}\left\{e^{ t\G}\right\} - \I = \G (\omega\I - \G)^{-1}
\end{equation}
Now, consider the following matrix product,
\begin{equation}
     \mathrm{LHS} \coloneqq \left(\omega \mathcal{L}\left\{e^{ t\G}\right\} - \I\right) \Y \left(\omega \mathcal{L}\left\{e^{ t\G}\right\} - \I\right)^\top = \G\left[(\omega\I-\G)^{-1} \, \Y \, (\omega\I-\G^\top)^{-1}\right]\G^\top \coloneqq \mathrm{RHS}
\end{equation}
Further simplification of the $\mathrm{LHS}$ gives us,
\begin{equation}
\begin{split}
    \mathrm{LHS} &= \left(\omega \mathcal{L}\left\{e^{ t\G}\right\} - \I\right) \Y \left(\omega \mathcal{L}\left\{e^{ t\G}\right\} - \I\right)^\top \\
    &= \omega^2 \mathcal{L}\left\{e^{t\G}\right\} \, \Y \, \mathcal{L}\left\{e^{t\G}\right\}^\top - \omega \mathcal{L}\left\{e^{ t\G}\right\} \Y - \omega \, \Y\, \mathcal{L}\left\{e^{ t\G}\right\}^\top \, +\Y\\
    &= \omega^2 (\omega\I - \G)^{-1}\, \Y\, (\omega\I -  \G)^{-\top} - \omega \mathcal{L}\left\{e^{ t\G}\right\} \Y - \omega \, \Y\, \mathcal{L}\left\{e^{t\G}\right\}^\top \, +\Y\\
    &= \omega^2\left[ ( \omega\I - \G)^{-1}\, \Y\, (\omega\I - \G^\top)^{-1}\right] - \omega \mathcal{L}\left\{e^{ t\G}\right\} \Y - \omega \, \Y\, \mathcal{L}\left\{e^{ t\G}\right\}^\top \, +\Y
\end{split}
\end{equation}
Since, from Eq.~\ref{eq:rational_function_g} we know the rational function form of $(\omega\I-\G)^{-1} \, \Y \, (\omega\I-\G^\top)^{-1}$, we substitute it in the $\mathrm{LHS}$ and the $\mathrm{RHS}$. We further assume that $\Y$ is positive definite, therefore, its inverse exists and multiply $\mathrm{LHS}$ and $\mathrm{RHS}$ with $\Y^{-1}$ on the left, which gives us the following equation,
\begin{equation}
\begin{split}
    \omega^2\, \Y^{-1}\left[\frac{\sum\limits_{\alpha=0}^{2n-2}\Pb_\alpha \omega^{\alpha} }{\sum\limits_{\alpha=0}^{2n}q_\alpha \omega^{\alpha}}\right] - \omega \left( \Y^{-1} \mathcal{L}\left\{e^{ t\G}\right\} \Y + \mathcal{L}\left\{e^{ t\G}\right\}^\top \right) + \I = \Y^{-1} \G\left[\frac{\sum\limits_{\alpha=0}^{2n-2}\Pb_\alpha \omega^{\alpha} }{\sum\limits_{\alpha=0}^{2n}q_\alpha \omega^{\alpha}}\right]\G^\top
\end{split}
\end{equation}
Multiplying both sides with ${\sum\limits_{\alpha=0}^{2n}q_\alpha \omega^{\alpha}}$ and taking the trace of both sides yields,
\begin{equation}
    \omega^2\Tr\left(\Y^{-1} \sum\limits_{\alpha=0}^{2n-2}\Pb_\alpha \omega^{\alpha} \right) - \omega {\sum\limits_{\alpha=0}^{2n}q_\alpha \omega^{\alpha}} \Tr\left( \Y^{-1} \mathcal{L}\left\{e^{ t\G}\right\} \Y + \mathcal{L}\left\{e^{ t\G}\right\}^\top \right) + n {\sum\limits_{\alpha=0}^{2n}q_\alpha \omega^{\alpha}} = \Tr\left(\Y^{-1} \G\left(\sum\limits_{\alpha=0}^{2n-2}\Pb_\alpha \omega^{\alpha} \right)\G^\top \right)
\end{equation}
Since $\Tr\left(\Y^{-1}  \mathcal{L}\left\{e^{ t\G}\right\}  \Y \right) = \Tr\left( \mathcal{L}\left\{e^{ t\G}\right\} \right)$ and from Lemma~\ref{lemma:1_g} we have that $\Tr\left(\mathcal{L}\left\{e^{ t \G} \right\} + \mathcal{L}\left\{e^{ t \G}\right\}^\top \right) = Q^\prime(\omega)/Q(\omega)$, where $Q(\omega) = {\sum\limits_{\alpha=0}^{2n}q_\alpha \omega^{\alpha}}$ and $Q^\prime(\omega) = {\sum\limits_{\alpha=1}^{2n}\alpha q_\alpha \omega^{\alpha-1}}$, the equation above simplifies to the following equation,
\begin{equation}
    \Tr\left(\Y^{-1} \sum\limits_{\alpha=0}^{2n-2}\Pb_\alpha \omega^{\alpha+2} \right) + {\sum\limits_{\alpha=0}^{2n} \left(n-\alpha\right) q_\alpha \omega^{\alpha}}  = \Tr\left(\Y^{-1} \G\left(\sum\limits_{\alpha=0}^{2n-2}\Pb_\alpha \omega^{\alpha} \right)\G^\top \right)
\end{equation}
Now we compare the coefficients of $\omega^{\alpha}$ and find the following recurrent relation,
\begin{equation}
    \Tr\left(\Y^{-1} \Pb_{\alpha-2}  \right) + \left(n-\alpha\right) q_\alpha = \Tr\left(\Y^{-1} \G \Pb_\alpha \G^\top \right)
\end{equation}
Substituting $\Pb_{\alpha-2} \rightarrow q_{\alpha} \Y + \G \Pb_{\alpha-1} + \Pb_{\alpha-1} \G^\top -\G \Pb_{\alpha} \G^\top$ using Eq.~\ref{eq:recurrence_g}, we get the following equation satisfied by $q_\alpha$,
\begin{equation}
\begin{split}
    \Tr\left(\Y^{-1} \left(q_{\alpha} \Y + \G \Pb_{\alpha-1} + \Pb_{\alpha-1} \G^\top -\G \Pb_{\alpha} \G^\top\right) \right) + \left(n-\alpha\right) q_\alpha &= \Tr\left(\Y^{-1} \G \Pb_\alpha \G^\top \right) \\
    n q_\alpha + 2 \Tr\left(\Y^{-1}\G \Pb_{\alpha-1} \right) - \Tr\left(\Y^{-1} \G \Pb_{\alpha} \G^\top \right)  + \left(n-\alpha\right) q_\alpha &= \Tr\left(\Y^{-1} \G \Pb_\alpha \G^\top \right) \\
\end{split}
\end{equation}
We have used the fact that $\Tr\left(\Y^{-1}\G \Pb_{\alpha-1} \right) = \Tr\left(\Pb_{\alpha-1} \G^\top \Y^{-1}\right) = \Tr\left(\Y^{-1} \Pb_{\alpha-1} \G^\top \right)$ which is true because $\Pb_{\alpha-1}$ and $\Y^{-1}$ are symmetric matrices. Solving for $q_\alpha$ gives us,
\begin{equation}
    q_\alpha = \frac{2}{2n - \alpha}\left[\Tr\left(\Y^{-1} \G \Pb_\alpha \G^\top \right) - \Tr\left(\Y^{-1}\G \Pb_{\alpha-1} \right)\right]
\end{equation}
\end{proof}

\begin{lemma}
\label{lemma:1_g}
Let $\Tr(\mathbf{G})$ denote the trace of the matrix $\mathbf{G}$, $\mathcal{L}\{f(t)\}$ denote the Laplace transform of $f(t)$ with the complex variable $\omega$ and $Q^\prime(\omega)$ be the derivative of the denominator of the rational function in Eq.~\ref{eq:rational_function_g}, $Q(\omega) = {\sum\limits_{\alpha=0}^{2n}q_\alpha \omega^{\alpha}}$ with respect to $\omega$, then we have,
\begin{equation}
    \Tr\left(\mathcal{L}\left\{e^{ t \G} \right\} + \mathcal{L}\left\{e^{ t \G}\right\}^\top \right) = \frac{Q^\prime(\omega)}{Q(\omega)}.
\end{equation}
\end{lemma}
\begin{proof}
Consider the $\mathrm{LHS}$,
\begin{equation}
\begin{split}
    \Tr\left(\mathcal{L}\left\{e^{ t \G} \right\} + \mathcal{L}\left\{e^{ t \G}\right\}^\top \right)
    &= \mathcal{L}\left\{\Tr\left(e^{ t \G} + e^{ t \G^\top} \right)\right\} \\
    &= \mathcal{L}\left\{\Tr\left( \sum_{r=0}^\infty \frac{t^r (\G)^r}{r!} +  \sum_{r=0}^\infty \frac{t^r (\G^\top)^r}{r!}\right)\right\} \\
    &= \mathcal{L}\left\{ \sum_{r=0}^\infty \Tr\left(\frac{t^r (\G)^r}{r!} \right)+  \sum_{r=0}^\infty \Tr\left(\frac{t^r (\G^\top)^r}{r!} \right) \right\}
\end{split}
\end{equation}
If $\lambda_j$ are the eigenvalues of $\G$ and $\G^\top$, we have,
\begin{equation}
\begin{split}
    \Tr\left(\mathcal{L}\left\{e^{ t \G} \right\} + \mathcal{L}\left\{e^{ t \G}\right\}^\top \right)
    &= \mathcal{L}\left\{ \sum_{r=0}^\infty \sum_{j=1}^n \frac{t^r (\lambda_j)^r}{r!} +  \sum_{r=0}^\infty \sum_{j=1}^n \frac{t^r ( \lambda_j)^r}{r!} \right\} \\
    &= \mathcal{L}\left\{  2 \sum_{j=1}^n e^{t \lambda_j} \right\} \\
    &= 2 \sum_{j=1}^n \frac{1}{\omega -  \lambda_j} 
\end{split}
\end{equation}
From Eq.\ref{eq:imp21}, we have,
\begin{equation}
\begin{split}
    {Q(\omega)} &= {\mathrm{det}(\omega\I-\G)\, \mathrm{det}(\omega\I-\G^\top)} \\
    &=  \prod_{j=1}^n (\omega - \lambda_j)^2
\end{split}
\end{equation}
Taking log on both sides and the derivative with respect to $\omega$, we get,
\begin{equation}
\begin{split}
    \log{Q(\omega)} &= 2 \sum_{j=1}^n \log(\omega - \lambda_j) \\
    \frac{Q^\prime(\omega)}{Q(\omega)} &= 2 \sum_{j=1}^n \frac{1}{\omega -  \lambda_j} 
\end{split}
\end{equation}
Therefore,
\begin{equation}
    \Tr\left(\mathcal{L}\left\{e^{ t \G} \right\} + \mathcal{L}\left\{e^{ t \G}\right\}^\top \right) = \frac{Q^\prime(\omega)}{Q(\omega)}.
\end{equation}

\end{proof}

\subsection{Integrated covariance matrix}
\begin{theorem}
\label{theorem:without_omega}
Let $\mathbf{G}$ be a convergent matrix, i.e., with a spectral radius less than 1, $\mathbf{Y}$ be a positive definite matrix, and $\mathbf{R}$ be a matrix with the following series expansion:
\begin{equation}
\begin{split}
    \mathbf{R} &= \left(\I - \mathbf{G} \right)^{-1} \mathbf{Y} \left(\I - \mathbf{G}^\top\right)^{-1} \\
    &= \sum_{m = 0}^\infty \sum_{\substack{j+k = m \\ j\geq0, \, k\geq 0}}\G^j \, \Y \, \left(\G^\top\right)^{k} \\
    &= \sum_{m = 0}^\infty \Ss_m 
\end{split}
\end{equation}
where $\Ss_m \coloneqq \sum_{\substack{j+k = m \\ j\geq0, \, k\geq 0}}\G^j \, \Y \, \left(\G^\top\right)^{k}$. The following algorithm provides a recursive method to compute $\Ss_m$,
\begin{equation}
\begin{split}
    t_{m} &= \frac{2}{m}\left[\Tr\left(\Y^{-1} \G \Ub_{m-2} \G^\top \right) - \Tr\left(\Y^{-1}\G \Ub_{m-1}  \right)\right] \\
    \Ub_{m} &= t_{m} \Y + \G \Ub_{m-1} + \Ub_{m-1} \G^\top -\G \Ub_{m-2} \G^\top \\
    \Ss_{m} &= \Ub_{m} - \sum_{k=0}^{m-1} t_{m-k} \, \Ss_k     
\end{split}
\end{equation}
starting with $m=0$ until $m=2n$ and the initial values $t_0=1$ and $\Ub_{-2} = \Ub_{-1} = \mathbf{0}$.
\end{theorem}

\begin{proof}
Consider a matrix defined as follows,
\begin{equation}
\begin{split}
    \mathbfcal{T}(\omega) &= (\omega\I-\G)^{-1} \, \Y \, (\omega\I-\G^\top)^{-1}  \\
    &= \frac{1}{\omega^2}\left(\I - \frac{1}{\omega}\G \right)^{-1} \, \Y \, \left(\I-\frac{1}{\omega}\G^\top\right)^{-1} 
\end{split}
\end{equation}
Since there always exists a value of $\omega$ such that $(\I - \G/\omega)$ and $(\I - \G^\top/\omega)$ are convergent, we can write the inverses as a convergent power series,
\begin{equation}
\begin{split}
    \mathbfcal{T}(\omega) &= \frac{1}{\omega^2} \left(\sum_{j=0}^{\infty} \frac{1}{\omega^j} \G^j\right) \, \Y \, \left(\sum_{k=0}^{\infty} \frac{1}{\omega^k} \left(\G^\top\right)^{k}\right) \\
    &= \frac{1}{\omega^2} \sum_{j, k = 0}^\infty \frac{1}{\omega^{j+k}} \G^j \, \Y \, \left(\G^\top\right)^{k} \\
    &= \frac{1}{\omega^2} \sum_{m = 0}^\infty \frac{1}{\omega^{m}} \left( \sum_{\substack{j+k = m \\ j\geq0, \, k\geq 0}}\G^j \, \Y \, \left(\G^\top\right)^{k} \right) \\
    &= \frac{1}{\omega^2} \sum_{m = 0}^\infty \frac{1}{\omega^{m}} \Ss_m
\end{split}
\end{equation}
where $\Ss_m$ represents the sum of the contribution of the paths of order (length) $m$. We now find find $\Ss_m$ recursively. Using Theorem~\ref{theorem:0_g}, we get the following equation,
\begin{equation}
    \frac{1}{\omega^2} \sum_{m = 0}^\infty \frac{1}{\omega^{m}} \Ss_m = \frac{\sum\limits_{\alpha=0}^{2n-2}\Pb_\alpha \omega^{\alpha} }{\sum\limits_{\alpha=0}^{2n}q_\alpha \omega^{\alpha}}
\end{equation}
alternatively, we have,
\begin{equation}
    \left( \sum\limits_{\alpha=0}^{2n}q_\alpha \omega^{\alpha} \right) \left(\sum_{m = 0}^\infty \frac{1}{\omega^{m}} \Ss_m \right) = \sum\limits_{\alpha=0}^{2n-2}\Pb_\alpha \omega^{\alpha+2}
\end{equation}
Expanding the product gives us,
\begin{equation}
    \sum\limits_{\alpha=0}^{2n} \sum_{m = 0}^\infty q_\alpha \omega^{\alpha-m}  \Ss_m = \Pb_0 \omega^{2} + \Pb_1 \omega^{3} + ... + \Pb_{2n-2} \omega^{2n}
\end{equation}
Now we compare the coefficients of $\omega^{2n-j}$ on both sides. To find the coefficient of $\omega^{2n-j}$ on the $\mathrm{LHS}$ of the equation above, we first set $\alpha - m = 2n - j$, which gives us $\alpha = 2n - j + m$ and then we find the values of $\alpha$ for $0 \leq m \leq j$ to ensure that $0 \leq \alpha \leq 2n$. This gives us the following relation,
\begin{equation}
    \Ss_j = \Pb_{2n-2-j} - \sum_{k=0}^{j-1} q_{2n-j+k} \, \Ss_k
\end{equation}
Making the following change of index, $j \rightarrow 2n - 1 - \alpha$, gives us
\begin{equation}
    \Ss_{2n-1-\alpha} = \Pb_{\alpha - 1} - \sum_{k=0}^{2n-2-\alpha} q_{\alpha+k+1} \, \Ss_k
\end{equation}
with $\Pb_{\alpha}$ and $q_\alpha$ given by the recursive algorithm in Theorem~\ref{theorem:0_g}. Therefore, the algorithm to recursively find $\Ss_0, \Ss_1, ..., \Ss_{2n}$ is given by,
\begin{equation}
\begin{split}
    \Pb_{\alpha-1} &= q_{\alpha+1} \Y + \G \Pb_{\alpha} + \Pb_{\alpha} \G^\top -\G \Pb_{\alpha+1} \G^\top \\
    \Ss_{2n-1-\alpha} &= \Pb_{\alpha - 1} - \sum_{k=0}^{2n-2-\alpha} q_{\alpha+k+1} \, \Ss_k \\
    q_\alpha &= \frac{2}{2n - \alpha}\left[\Tr\left(\Y^{-1} \G \Pb_\alpha \G^\top \right) - \Tr\left(\Y^{-1}\G \Pb_{\alpha-1} \right)\right]
\end{split}
\end{equation}
starting with $\alpha = 2n-1$ until $\alpha=-1$. Note that $\Pb_{2n}=\Pb_{2n-1}=\Pb_{-1}=\Pb_{-2}=\mathbf{0}$, $q_{2n}=1$ and $q_{-1}=0$. Now, we first simplify this algorithm by first making the following substitution: $q_\alpha = t_{2n-\alpha}$ and $\Pb_\alpha = \Ub_{2n-2-\alpha}$. This gives us the following recurrence relation,
\begin{equation}
\begin{split}
    t_{2n-1-\alpha} &= \frac{2}{2n - 1 - \alpha}\left[\Tr\left(\Y^{-1} \G \Ub_{2n-3-\alpha} \G^\top \right) - \Tr\left(\Y^{-1}\G \Ub_{2n-2-\alpha}  \right)\right] \\
    \Ub_{2n-1-\alpha} &= t_{2n-1-\alpha} \Y + \G \Ub_{2n-2-\alpha} + \Ub_{2n-2-\alpha} \G^\top -\G \Ub_{2n-3-\alpha} \G^\top \\
    \Ss_{2n-1-\alpha} &= \Ub_{2n-1-\alpha} - \sum_{k=0}^{2n-2-\alpha} t_{2n-1-\alpha-k} \, \Ss_k     
\end{split}
\end{equation}
Finally, we make the following index change: $m = 2n-1-\alpha$ which gives us the recursive relation,
\begin{equation}
\begin{split}
    t_{m} &= \frac{2}{m}\left[\Tr\left(\Y^{-1} \G \Ub_{m-2} \G^\top \right) - \Tr\left(\Y^{-1}\G \Ub_{m-1}  \right)\right] \\
    \Ub_{m} &= t_{m} \Y + \G \Ub_{m-1} + \Ub_{m-1} \G^\top -\G \Ub_{m-2} \G^\top \\
    \Ss_{m} &= \Ub_{m} - \sum_{k=0}^{m-1} t_{m-k} \, \Ss_k     
\end{split}
\end{equation}
starting with $m=0$ until $m=2n$ and the following initial values given: $t_0=1$ and $\Ub_{-2} = \Ub_{-1} = \mathbf{0}$.

\end{proof}

\newpage

\section{Element-wise solution}
In this section, we describe an element-wise solution for the rational functions of the auto- and cross-spectrum for a general N-dimensional system. We first assume that the matrix $\mathbf{L}$ is square, i.e., $\mathbf{L} \in \R^{n\times n}$ and therefore, $\mathbf{D} \in \R^{n\times n}$. The result can easily be extended to a rectangular $\mathbf{L} \in \R^{n\times m}$ matrix by defining the positive semidefinite matrix $\mathbf{C}= \mathbf{L} \, \mathbf{D} \,\mathbf{L}^{\top}$ and then computing a new set of $\mathbf{L}$ and $\mathbf{D}$ matrices by LDL decomposition, such that $\mathbf{L}, \mathbf{D} \in \R^{n\times n}$.

We start by rewriting the matrix inverse in Eq.~\ref{eq:gen1} in terms of the corresponding adjoints and determinants of the matrices denoted by $\mathrm{adj}(.)$ and $|.|$, respectively.
From Eq.~\ref{eq:diff_mat}, we know that the diffusion matrix $\mathbf{D}$ can be written as,
\begin{equation}
    \mathbf{D} = \mathbf{S} \, \mathbf{S}^\top
\end{equation}
Substituting in Eq.~\ref{eq:gen1},
\begin{equation}
\begin{split}
    \mathbfcal{S}(\omega) &= \frac{\mathrm{adj}(\mathbf{J}+\dot{\iota} \omega\mathbf{I}) \, \mathbf{L} \, \mathbf{S} \, \mathbf{S}^\top \,\mathbf{L}^{\top} \, \mathrm{adj}^\top(\mathbf{J}-\dot{\iota} \omega\mathbf{I})}{|\mathbf{J}+\dot{\iota} \omega\mathbf{I}|\, |\mathbf{J}-\dot{\iota} \omega\mathbf{I}|} \\
    &= \frac{\mathrm{adj}(\mathbf{J}+\dot{\iota} \omega\mathbf{I}) \, \mathbf{L} \, \mathbf{S} 
    \, \left( \mathrm{adj}(\mathbf{J}+\dot{\iota} \omega\mathbf{I}) \, \mathbf{L} \,  \mathbf{S}  \right)^*}{|\mathbf{J}+\dot{\iota} \omega\mathbf{I}|\, |\mathbf{J}-\dot{\iota} \omega\mathbf{I}|} \\ 
    &= \frac{\mathbf{Z}(\omega)}{Q(\omega)} 
    \label{eq:gen21}
\end{split}
\end{equation}
Where $\mathbf{Z}(\omega)$ is a matrix containing the polynomial expansion for the numerators of both auto and cross spectrums, in terms of $\omega$; and $Q(\omega)$ is the polynomial expansion of the denominator. 

\subsection{Denominator and Newton's identities} \label{sec:denominator_and_newton}
We derive the coefficients of the polynomial expansion of the denominator, $Q(\omega)$. First, we will introduce elementary symmetric polynomials, a common tool used throughout this text, which shows how we can express various terms in the numerator and denominator as polynomials of $\omega$. To demonstrate the analysis, which uses elementary symmetric polynomials, we show how we can expand the denominator into a polynomial in $\omega$, and find its coefficients. First, we show a brute force method of calculating the said coefficients and then demonstrate that we get the same expressions using elementary symmetric polynomials, and finally use the function definitions from Sec.~\ref{sec:function_def} to write a compact expression for the polynomial. We know from Eq.~\ref{eq:gen21} that the denominator $Q(\omega)$ can be written as the product of the determinants,
\begin{equation}
    Q(\omega) = |\mathbf{J}+\dot{\iota} \omega\mathbf{I}| |\mathbf{J}-\dot{\iota} \omega\mathbf{I}| \label{eq:denm_identity}
\end{equation}
where $\mathbf{J}\in \R^{n \times n}$. If $\lambda_i$ are the eigenvalues of the matrix $\mathbf{J}$, using Eq.~\ref{eq:det_prod}, we have,
\begin{equation}
    |\mathbf{J} \pm \dot{\iota} \omega| = \prod_{i=1}^n (\lambda_i \pm \dot{\iota}\omega)
\end{equation}
Therefore we can express the denominator as an even-powered polynomial,
\begin{equation}
\begin{split}
    Q(\omega) &= \prod_{i=1}^n (\lambda_i + \dot{\iota}\omega)(\lambda_i - \dot{\iota}\omega) \\
    &= \prod_{i=1}^n (\lambda_i^2 + \omega^2) \\
    &= q_0 + q_1 \omega^2 + q_2 \omega^4 + \dots + q_n \omega^{2n}
\end{split}
\end{equation}
We also note that the property above is true even if $\lambda$ is complex. We show this by dividing the product into two parts: first when the eigenvalues are purely real (cardinality of $n-2m$) and second when they are complex ($m$ pairs of conjugate complex pairs). Since the matrix $\mathbf{J}$ is real, its complex eigenvalues occur as complex conjugate pairs. Therefore, we can write, 
\begin{equation}
    Q(\omega) = \prod_{i=1}^{n-2m} (\lambda_i + \dot{\iota}\omega)(\lambda_i - \dot{\iota}\omega)  \prod_{j=1}^{m} (\lambda_j + \dot{\iota}\omega) (\lambda_j - \dot{\iota}\omega) (\overline{\lambda}_j + \dot{\iota}\omega) (\overline{\lambda}_j - \dot{\iota}\omega)
\end{equation}
On further simplification, we can write the equation as,
\begin{equation}
\begin{split}
    Q(\omega) & = \prod_{i=1}^{n-2m} (\lambda_i^2 + \omega^2) \prod_{j=1}^{m} (\lambda_j^2 + \omega^2) (\overline{\lambda}_j^2 + \omega^2) \\
    & = \prod_{i=1}^{n} (\lambda_i^2 + \omega^2) \label{eq:demn_wr1}
\end{split}
\end{equation}
Now we demonstrate how we can calculate the coefficients of the polynomials using brute force for a 4-dimensional system. The expansion of Eq.~\ref{eq:demn_wr1} can be written as,
\begin{equation}
    Q(\omega) = q_0 + q_1 \omega^2 + q_2 \omega^4 + q_3 \omega^6 + q^4 \omega^8 \label{eq:denm_expansion}
\end{equation}
We find the different coefficients $q_k$ using the properties of matrices, specifically we use the properties from Eq.~\ref{eq:det_prod} and Eq.~\ref{eq:trace_sum} to find,
\begin{equation}
    q_0 = \lambda_1^2 \lambda_2^2 \lambda_3^2 \lambda_4^2 = \prod_{i=1}^n \lambda_i = \mathrm{det}^2(\mathbf{J}) \label{eq:q_0}
\end{equation}
\begin{equation}
\begin{split}
    q_1 &= \lambda_1^2 \lambda_2^2 \lambda_3^2 + \lambda_1^2 \lambda_2^2 \lambda_4^2 +\lambda_1^2 \lambda_3^2 \lambda_4^2 +\lambda_2^2 \lambda_3^2 \lambda_4^2 \\
    &= \frac{(\sum_{i=1}^n \lambda_i^2)^3 + 2\sum_{i=1}^n \lambda_i^6 - 3\sum_{i=1}^n \lambda_i^4 \sum_{i=1}^n \lambda_i^2}{6}\\
    &= \frac{(\Tr( \mathbf{J}^2))^3 + 2 \Tr( \mathbf{J}^6) - 3 \Tr( \mathbf{J}^4) \Tr( \mathbf{J}^2)}{6}
\end{split}    
\end{equation}
\begin{equation}
    q_2 = \lambda_1^2 \lambda_2^2 + \lambda_1^2 \lambda_3^2 +\lambda_1^2 \lambda_4^2 +\lambda_2^2 \lambda_3^2 +\lambda_2^2 \lambda_4^2 +\lambda_3^2 \lambda_4^2 
    = \frac{(\sum_{i=1}^n \lambda_i^2)^2 - \sum_{i=1}^n \lambda_i^4}{2}
    = \frac{(\Tr( \mathbf{J}^2))^2 - \Tr( \mathbf{J}^4) }{2}
\end{equation}
\begin{equation}
    q_3 = \lambda_1^2 + \lambda_2^2 + \lambda_3^2 + \lambda_4^2   
    = \sum_{i=1}^n \lambda_i^2  
    = \Tr( \mathbf{J}^2) 
\end{equation}
\begin{equation}
    q_4 = 1 \label{eq:q_4}
\end{equation}
Therefore for a 4-D system, we can write the denominator as,
\begin{equation}
  Q(\omega) =  \mathrm{det}^2(\mathbf{J}) + \frac{\Tr( \mathbf{J}^2)^3 + 2 \Tr( \mathbf{J}^6) - 3 \Tr( \mathbf{J}^4) \Tr( \mathbf{J}^2)}{6} \omega^2  + \frac{\Tr( \mathbf{J}^2)^2 - \Tr( \mathbf{J}^4) }{2} \omega^4 + \Tr( \mathbf{J}^2) \omega^6 + \omega^8
\end{equation}

Alternatively, we can find the general solution for any dimensional system using Newton's identities and thereby write the coefficients of different powers of $\omega$ in the denominator. To use this method, we first need to introduce Newton's identities. We give a brief primer on Newton's identities and show how we can express the various orders of elementary symmetric polynomials in terms of Bell polynomials, then continue with our analysis. Let $x_1,x_2,...,x_n$ be the variables, we define $r^k(x_1,x_2,...,x_n)$ to be the $k^{th}$ power sum (for $k\geq1$),
\begin{equation}
    r^k(x_1,x_2,...,x_n) = \sum\nolimits_{i=1}^{n} x_i^k
\end{equation}
and the elementary symmetric polynomial $e^l(x_1,x_2,...,x_n)$ (for $l\geq0$),
\begin{equation}
\begin{split}
    e^0(x_1,x_2,...,x_n) &= 1 \\
    e^1(x_1,x_2,...,x_n) &= x_1 + x_2 +...+x_n \\
    e^2(x_1,x_2,...,x_n) &= \sum\nolimits_{1<i<j<n} x_i x_j\\
    &.\\
    &.\\
    &.\\
    e^k(x_1,x_2,...,x_n) &= \sum\nolimits_{1<i<j<...<t<n}\underbrace{x_i x_j ... x_t}_{\text{k vars}}\\
    e^n(x_1,x_2,...,x_n) &= x_1 x_2 ...x_n
\end{split}
\end{equation}
that obey the so-called Newton's identities, expressed concisely as, 
\begin{equation}
    ke^k(x_1,x_2,...,x_n) = \sum_{i=1}^{k} (-1)^{i-1} e^{k-i}(x_1,x_2,...,x_n) r^i(x_1,x_2,...,x_n) \label{eq:newton1}
\end{equation}

We note that we can write the coefficients of the polynomial in $\omega$, $q_k$, in terms of elementary symmetric polynomials of various orders,  $e^i(\lambda_1^2, \lambda_2^2,...,\lambda_n^2)$,
\begin{equation}
  Q(\omega) = \prod_{i=1}^{n} (\lambda_i^2 + \omega^2) 
    = \sum_{i=0}^{n} e^i(\lambda_1^2, \lambda_2^2,...,\lambda_n^2)\omega^{2(n-i)} \label{eq:denm_lambda}
\end{equation}
Noting that $ e^0(\lambda_1^2, \lambda_2^2,...,\lambda_n^2) = q_4 = 1$ by definition, and $ e^1(\lambda_1^2, \lambda_2^2,...,\lambda_n^2) = q_3 = \sum_{i=1}^n \lambda_i^2 = \Tr( \mathbf{J}^2 )$  and that $r^k$ can be written as follows,
\begin{equation}
    r^k(\lambda_1^2, \lambda_2^2,...,\lambda_n^2) = \sum_{j=1}^{n} \lambda_j^{2k} \label{eq:r_k}
     = \Tr( \mathbf{J}^{2k} )
\end{equation}
we can find $e^i(\lambda_1^2, \lambda_2^2,...,\lambda_n^2)$ recursively for any dimension using Eq.~\ref{eq:newton1}.
\begin{equation}
\begin{aligned}
&\begin{aligned}
    e^2(\lambda_1^2, \lambda_2^2,...,\lambda_n^2) &= q_2 \\
    &= \frac{e^1r^1 - e^0r^2}{2}\\
    &= \frac{\Tr( \mathbf{J}^2 ) \Tr( \mathbf{J}^{2} )-  \Tr( \mathbf{J}^{4} )}{2}\\
    &= \frac{\Tr( \mathbf{J}^2 )^2 - \Tr{ (\mathbf{J}^{4} )}}{2}
\end{aligned}\\
&\begin{aligned}
    e^3(\lambda_1^2, \lambda_2^2,...,\lambda_n^2) &= q_1 \\
    &= \frac{e^2r^1 - e^1r^2 + e^0r^3}{3}\\
    &= \frac{(\Tr( \mathbf{J}^2 )^2 - \Tr( \mathbf{J}^{4} )) \Tr( \mathbf{J}^{2} )/2 -  \Tr( \mathbf{J}^{2} ) \Tr( \mathbf{J}^{4} )   + \Tr( \mathbf{J}^{6} )}{3}\\
    &= \frac{\Tr( \mathbf{J}^2)^3 + 2 \Tr( \mathbf{J}^6) - 3 \Tr( \mathbf{J}^4) \Tr( \mathbf{J}^2)}{6}
\end{aligned}\\
&\begin{aligned}
    e^4(\lambda_1^2, \lambda_2^2,...,\lambda_n^2) &= q_0 \\
    &=  \prod_{i=1}^n \lambda_i^2\\
    &= \mathrm{det}^2(\mathbf{J})
\end{aligned}
\end{aligned}
\end{equation}
We thus confirm that we find the same expressions for the coefficients (Eq.~\ref{eq:q_0}-\ref{eq:q_4}) using the brute force method and using Newton's identities.

We can also express the elementary symmetric polynomial, $e^k$, in terms of Bell polynomial as follows,
\begin{equation}
    e^k = \frac{(-1)^k}{k!} B(-r^1, -1! r^2, ..., -(k-1)! r^k) \label{eq:bell10}
\end{equation}
where $B$ is the complete exponential Bell polynomial and can be calculated using the determinant of the upper Hessenberg matrix, $\mathbb{B}^k$, with determinant evaluation in $\mathbb{O}(n^2)$ \cite{cahill2002fibonacci}. In general, for a Bell polynomial defined on the set of inputs $(x_1,x_2,...,x_k)$, we have,
\begin{equation}
    B(x_1,x_2,...,x_k) = \mathrm{det}(\mathbb{B}^k(x_1,x_2,...,x_k))  \label{eq:bell-1}
\end{equation}
where the matrix $\mathbb{B}^k(x_1,x_2,...,x_k)$ is,
\begin{equation}
    \mathbb{B}^k(x_1,x_2,...,x_k) = \begin{bmatrix} 
    \frac{x_1}{0!} & \frac{x_2}{1!} & \frac{x_3}{2!} & \dots & \frac{x_{k-1}}{(k-2)!} & \frac{x_k}{(k-1)!} \\
    -1 & \frac{x_1}{0!} & \frac{x_2}{1!} & \dots & \frac{x_{k-2}}{(k-3)!} & \frac{x_{k-1}}{(k-2)!}\\
    0 & -2 & \frac{x_1}{0!} & \dots & \frac{x_{k-3}}{(k-4)!} & \frac{x_{k-2}}{(k-3)!} \\
    \vdots &  & \ddots & & \vdots & \vdots \\
    0 & 0 & 0 & \dots & -(k-1) & \frac{x_1}{0!}
    \end{bmatrix}.
\end{equation}
Now, for the input $\mathbf{r}^k = (-r^1, -1! r^2, ..., -(k-1)! r^k)$ to the Bell polynomial, we have the simplified $\mathbb{B}^k(\mathbf{r}^k)$ matrix,
\begin{equation}
    \mathbb{B}^k(\mathbf{r}^k) = \begin{bmatrix} 
    -r^1 & -r^2 & -r^3 & \dots & -r^{k-1} & -r^k \\
    -1 & -r^1 & -r^2 & \dots & -r^{k-2} & -r^{k-1}\\
    0 & -2 & -r^1 & \dots & -r^{k-3} & -r^{k-2} \\
    \vdots &  & \ddots & & \vdots & \vdots \\
    0 & 0 & 0 & \dots & -(k-1) & -r^1
    \end{bmatrix}
\end{equation}
such that,
\begin{equation}
    B(\mathbf{r}^k) = \mathrm{det}(\mathbb{B}^k(\mathbf{r}^k))  \label{eq:bell}
\end{equation}
Recall from Eq.~\ref{eq:r_k} that $r^k(\lambda_1,\lambda_2,\dots,\lambda_n)=\Tr(\mathbf{X}^k)$. Therefore, we define,
\begin{equation}
\begin{aligned}
  \mathbf{r}^k (\mathbf{X}) = (-\Tr( \mathbf{X}), -1!\Tr( \mathbf{X}^2), -2!\Tr( \mathbf{X}^3), ..., -(k-1)! \Tr( \mathbf{X}^k)) \label{eq:r_j} \\ 
\end{aligned}
\end{equation}
and,
\begin{equation}
\begin{aligned}
  e^k = \frac{(-1)^k}{k!} B(\mathbf{r}^k (\mathbf{X})) \\ 
\end{aligned}
\end{equation}
where, 
\begin{equation}
\begin{aligned}
   B(\mathbf{r}^k (\mathbf{X})) = \mathrm{det}
   \begin{bmatrix} 
    -\Tr( \mathbf{X}) & -\Tr( \mathbf{X}^2) & -\Tr( \mathbf{X}^3) & \dots & -\Tr( \mathbf{X}^{k-1}) & -\Tr( \mathbf{X}^k) \\
    -1 & -\Tr( \mathbf{X}) & -\Tr( \mathbf{X}^2) & \dots & -\Tr( \mathbf{X}^{k-2}) & -\Tr( \mathbf{X}^{k-1})\\
    0 & -2 & -\Tr( \mathbf{X}) & \dots & -\Tr( \mathbf{X}^{k-3}) & -\Tr( \mathbf{X}^{k-2}) \\
    \vdots &  & \ddots & & \vdots & \vdots \\
    0 & 0 & 0 & \dots & -(k-1) & -\Tr( \mathbf{X})
    \end{bmatrix} \label{eq:hessen_mat}
\end{aligned}
\end{equation}

We can then express $q_\alpha$, the coefficient of $\omega^{2\alpha}$ in the expansion of Eq.~\ref{eq:denm_expansion}, as
\begin{equation}
    q_\alpha =  \frac{(-1)^{n-\alpha}}{(n-\alpha)!} B(\mathbf{r}^{n-\alpha}(\mathbf{J}^2))
\end{equation}
This equation forms the basis of our ``canonical functions'' definitions in Section~\ref{sec:function_def}. Thus, we express the expansion of the polynomial in the denominator as,
\begin{equation}
\begin{aligned}
    Q(\omega) &= q_0 + q_1 \omega^2 + q_2 \omega^4 + \dots + q_n \omega^{2n} \\
    &= d(\omega; \mathbf{J}) \label{eq:denm_expansion11}
\end{aligned}
\end{equation}
and $q_\alpha$ can be defined in terms of the canonical function $d^\alpha(\mathbf{J})$ as, 
\begin{equation}
    q_\alpha = d^\alpha(\mathbf{J}). \label{eq:denm_coeff_soln}
\end{equation}

\subsection{Numerator}
Now, we move on to finding the rational function solution for the numerator. This corresponds to finding the polynomial expansion (in $\omega$) of the diagonal (off-diagonal) elements of the numerator for auto(cross)-spectrum $\mathbf{Z}(\omega)$, of $\mathbfcal{S}(\omega)$. Following Eq.~\ref{eq:gen21}, we have,
\begin{equation}
\begin{split}
    \mathbf{Z}(\omega) =& \mathrm{adj}(\mathbf{J}+\dot{\iota} \omega\mathbf{I}) \, \mathbf{L} \, \mathbf{S} 
    \, \left( \mathrm{adj}(\mathbf{J}+\dot{\iota} \omega\mathbf{I}) \, \mathbf{L} \,  \mathbf{S}  \right)^* \\
    =& \Pb(\omega) + \dot{\iota}\omega \Pb^\prime(\omega) \label{eq:app-z}
\end{split}
\end{equation}
If $a_{ij}$ are the $(i,j)$ elements of the matrix $\mathbf{J}$, the matrix $\mathbf{J}+\dot{\iota} \omega\mathbf{I}$ is given by,
\begin{equation}
    \mathbf{J}+\dot{\iota} \omega\mathbf{I}=
    \begin{bmatrix} 
    a_{11}+\dot{\iota} \omega &  \dots & a_{1n} \\
    \vdots & \ddots & \vdots\\
    a_{n1} &    \dots    & a_{nn}+\dot{\iota} \omega
    \end{bmatrix}
\end{equation}
% which in 4D is given by,
% \begin{equation}
%     \mathbf{J}+\dot{\iota} \omega\mathbf{I}=
%     \begin{bmatrix}
%         a+\dot{\iota} \omega & b & c & d\\
%         e & f+\dot{\iota} \omega & g & h\\
%         i & j & k+\dot{\iota} \omega & l \\
%         m & n & o & p+\dot{\iota} \omega
%     \end{bmatrix}
% \end{equation}
Put simply, the numerator is obtained by multiplying the matrix $\mathrm{adj}(\mathbf{J}+\dot{\iota} \omega\mathbf{I}) \, \mathbf{L} \, \mathbf{S} $  with its conjugate transpose. Note that from Eq.~\ref{eq:app-z}, we have that the elements of $\mathbf{Z}(\omega)$ take the form
\begin{equation}
\begin{split}
    Z^{ij}(\omega) &= P^{ij}(\omega) + \dot{\iota} \omega P^{ij\prime}(\omega)\\ 
    &= \sum_{\alpha=0}^{n-1}p^{ij}_{\alpha}\omega^{2\alpha} + \dot{\iota}\omega\sum_{\alpha=0}^{n-2}p^{ij\prime}_{\alpha}\omega^{2\alpha}
\end{split}
\end{equation}
We can write the matrix  $\mathrm{adj}(\mathbf{J}+\dot{\iota} \omega\mathbf{I}) \, \mathbf{L} \, \mathbf{S} $ in terms of the corresponding cofactor matrix as,
\begin{equation}
    \mathrm{adj}(\mathbf{J}+\dot{\iota} \omega\mathbf{I}) \, \mathbf{L} \, \mathbf{S} = \begin{bmatrix} 
    M_{11} &  \dots & (-1)^{1+n}M_{1n} \\
    -M_{21} & . &  (-1)^{2+n}M_{2n} \\
    \vdots &   \ddots & \vdots\\
    (-1)^{n+1}M_{n1} &    \dots & (-1)^{n+n}M_{nn}
    \end{bmatrix}^\top 
    \begin{bmatrix}
        \sigma_1 l_{11} & \sigma_2 l_{12} & \dots & \sigma_n l_{1n}\\
        \sigma_1 l_{21} & \sigma_2 l_{22} & \dots & \sigma_n l_{2n}\\
        \vdots &        & \ddots & \vdots \\
        \sigma_1 l_{n1} & \sigma_2 l_{n2} & \dots & \sigma_n l_{nn}
    \end{bmatrix} \label{eq:mat_prod}
\end{equation}
Here the minor $M_{ij}$ is the determinant of the submatrix $\mathbf{M}_{ij} \in \C^{n-1 \times n-1}$, formed by excluding the $i^{th}$ row and $j^{th}$ column of the matrix $\mathbf{J}+\dot{\iota}\omega\mathbf{I}$. Now, the $(a,b)$ element of the resulting matrix product can be written as $\sum_{o=1}^{n} (-1)^{a+o} M_{oa}(\sigma_b l_{ob})$. Similarly, the $(x,y)$ element of the matrix $\overline{\mathrm{adj}(\mathbf{J}+\dot{\iota} \omega\mathbf{I}) \, \mathbf{L} \, \mathbf{S}}$ can be written as $\sum_{p=1}^{n} (-1)^{y+p} \overline{M}_{py}(\sigma_x l_{px})$. To extract the $(i,j)$ element of $\mathbf{Z}(\omega)$, we expand the expression above to find,
\begin{equation}
\begin{split}
    Z^{ij}(\omega) =& \sum_{m=1}^n \biggl( \sum_{o=1}^n (-1)^{i+o} M_{oi} \sigma_m l_{om} \biggr) \biggl( \sum_{p=1}^n (-1)^{j+p} \overline{M}_{pj} \sigma_m l_{pm} \biggr) \\
    =& \sum_{m=1}^n \sigma_m^2 \sum_{o=1}^n \sum_{p=1}^n (-1)^{i+j+o+p} l_{om} l_{pm}  M_{oi}  \overline{M}_{pj}
    \label{eq:minor1}
\end{split}
\end{equation}
In the following sections, we consider $Z^{ij}(\omega)$ in 2 distinct settings: $i=j$ and $i\neq j$ to find the real and complex rational functions for auto-spectrum and cross-spectrum, respectively.

\subsubsection{Auto-spectrum}
We first find the analytical expressions for the numerator of the auto-spectrum, that correspond to the diagonal elements of $\mathbf{Z}(\omega)$. To facilitate our calculations, we split the expression into 4 parts,
\begin{equation}
    Z^{ii}(\omega) = \sum_{m=1}^n \sigma^2_m \biggl( l^2_{im} M_{ii} \overline{M}_{ii} + \sum_{\substack{j=1  \\j\neq i}}^n  l_{jm}^2  M_{ji} \overline{M}_{ji}   + 2 \sum_{\substack{j=1 \\ j\neq i}}^n (-1)^{i+j} l_{im}l_{jm} \mathrm{Re}[M_{ii}  \overline{M}_{ji} ] + 2 \sum_{\substack{j=1 \\ j\neq i}}^n \sum_{\substack{k=1 \\ k \neq i}}^{j-1}  (-1)^{j+k} l_{jm}l_{km} \mathrm{Re}[  M_{ji} \overline{M}_{ki} ] \biggr)  \label{eq:minor2}
\end{equation}
Note that the expression above is purely real, therefore, $\mathbf{Z}(\omega) = \mathbf{P}(\omega)$ and, $Z^{ii}(\omega) = P^{ii}(\omega)$. The minor matrices, $\mathbf{M}_{ij}$, contain some entries with $\dot{\iota}\omega$ added to them. Since exchanging the rows and columns of a matrix only changes the sign of the determinant, we systematically define the transformations on the $\mathbf{M}_{ij}$ such that the entries with $\dot{\iota}\omega$ appear on the diagonals only. But before defining the transformation, we define the matrices $\mathbf{N}_{ij} \in \R^{n-1 \times n-1}$ (with associated minors $N_{ij}\in \R$), which are formed simply by removing the terms $\dot{\iota} \omega$ from the corresponding  $\mathbf{M}_{ij}$ matrix. Then, by applying a transformation, summarized as an algorithm in Fig.~2 of the main text, to $\mathbf{N}_{ij}$, we get the matrices $\mathbf{O}_{ij} \in \R^{n-1 \times n-1}$, in terms of which the solution is defined. Therefore, the $\mathbf{O}_{ij}$ matrices can be calculated directly from the Jacobian matrix $\mathbf{J}$. We show how to construct the matrices for 2, 3, and 4D systems in Fig.~2 of the main text. 

Note that for $i=j$, the transformation is an identity and all the elements of $\mathbf{M}_{ij}$ containing $\iota\omega$ appear on the diagonal of the matrix. Therefore, we can write $M_{ii}=|\mathbf{O}_{ii}+\dot{\iota}\omega\mathbf{I}|$. When $i\neq j$, the exchange of rows and columns in this transformation yields the following relation between the determinants, 
\begin{equation}
O_{ij}=(-1)^{\gamma}N_{ij}
\end{equation}
where $\gamma=|i-j|-\delta_{ij}$ are the number of row/column exchanges performed in the transformation, and $\delta_{ij}$ is the Kronecker delta. Additionally, we introduce the standard unit matrix $\mathbf{E}_\beta = \mathbf{e}_\beta {\mathbf{e}_\beta}^\top$ associated with the matrix $\mathbf{O}_{ij}$, where $\mathbf{e}_\beta \in \{0,1\}^{n-1 \times 1}$ is the standard unit vector equal to 1 only at index $\beta$ and 0 everywhere else. The parameter $\beta$ (for $i\neq j$) is found to be $\beta=\min(i,j)$ , which allows us to express the minors as 
\begin{equation}
M_{ij}=(-1)^{|i-j|-\delta_{ij}}|\mathbf{O}_{ij}+\dot{\iota}\omega\mathbf{I}-\dot{\iota}\omega\mathbf{E}_\beta|   
\end{equation}
The purpose of this decomposition is that it allows us to write 
\begin{equation}
    |\mathbf{O}_{ij}+\dot{\iota}\omega\mathbf{I}-\dot{\iota}\omega\mathbf{E}_\beta|=|\mathbf{O}_{ij}+\dot{\iota}\omega\mathbf{I}| - \dot{\iota}\omega{\mathbf{e}_\beta}^\top \mathrm{adj}(\mathbf{O}_{ij}+\dot{\iota}\omega\mathbf{I}){\mathbf{e}_\beta}
\end{equation}
Since the non-zero element of $\mathbf{e}_\beta$ lies at the index $\beta$, we can simplify the determinant above by defining a submatrix ($\mathbf{O}_{ij}^\prime \in \R^{n-2\times n-2}$) of $\mathbf{O}_{ij}$ formed by removing the row and column at the index $\beta$. The equation then simplifies to,
\begin{equation}
    |\mathbf{O}_{ij}+\dot{\iota}\omega\mathbf{I}-\dot{\iota}\omega\mathbf{E}_\beta|=|\mathbf{O}_{ij}+\dot{\iota}\omega\mathbf{I}| - \dot{\iota}\omega|\mathbf{O}_{ij}^\prime+\dot{\iota}\omega\mathbf{I}|
\end{equation}
This property forms the basis for constructing the canonical functions defined in Section~\ref{sec:function_def}. Upon replacing the matrices in Eq.~\ref{eq:minor2} with matrices $\mathbf{O}_{ij}$, we get,
\begin{equation}
\begin{split}
    P^{ii}(\omega) =&
    \sum_{m=1}^n \sigma^2_m \biggl( l_{im}^2 |\mathbf{O}_{ii} +  \dot{\iota}\omega\mathbf{I}| \overline{|\mathbf{O}_{ii} + \dot{\iota}\omega\mathbf{I}|} + 2 \sum_{\substack{j=1 \\ j\neq i}}^n  (-1)^{i+j} l_{im}l_{jm} \mathrm{Re}\bigl[|\mathbf{O}_{ii} +  \dot{\iota}\omega\mathbf{I}| (-1)^{|i-j|-1} \overline{|\mathbf{O}_{ji} + \dot{\iota}\omega\mathbf{I} - \dot{\iota}\omega\mathbf{E}_{\min(i,j)}|}\bigr]  \\
    & + \sum_{\substack{j=1 \\ j\neq i}}^n  l_{jm}^2 (-1)^{|i-j|-1}|\mathbf{O}_{ji} + \dot{\iota}\omega\mathbf{I} - \dot{\iota}\omega\mathbf{E}_{\min(i,j)} | (-1)^{|i-j|-1}\overline{|\mathbf{O}_{ji} + \dot{\iota}\omega\mathbf{I} - \dot{\iota}\omega\mathbf{E}_{\min(i,j)}|} \\
    & + 2 \sum_{\substack{j=1 \\ j\neq i}}^n \sum_{\substack{k=1 \\ k \neq i}}^{j-1} (-1)^{j+k} l_{jm}l_{km} \mathrm{Re} \left[(-1)^{|j-i|-1}|\mathbf{O}_{ji} + \dot{\iota}\omega\mathbf{I} - \dot{\iota}\omega\mathbf{E}_{\min(i,j)}|  (-1)^{|k-i|-1}\overline{|\mathbf{O}_{ki} + \dot{\iota}\omega\mathbf{I} - \dot{\iota}\omega\mathbf{E}_{\min(i,k)}|} \right] \biggr)
    \label{eq:Os}
\end{split}
\end{equation}
On simplifying this equation, we get,
\begin{equation}
\begin{split}
    P^{ii}(\omega) =
    \sum_{m=1}^n \sigma^2_m \biggl(& l_{im}^2 |\mathbf{O}_{ii} +  \dot{\iota}\omega\mathbf{I}| \overline{|\mathbf{O}_{ii} + \dot{\iota}\omega\mathbf{I}|} - 2 \sum_{\substack{j=1 \\ j\neq i}}^n   l_{im}l_{jm} \mathrm{Re}\bigl[|\mathbf{O}_{ii} +  \dot{\iota}\omega\mathbf{I}|  \overline{|\mathbf{O}_{ji} + \dot{\iota}\omega\mathbf{I} - \dot{\iota}\omega\mathbf{E}_{\min(i,j)}|}\bigr] \\& + \sum_{\substack{j=1 \\ j\neq i}}^n  l_{jm}^2 |\mathbf{O}_{ji} + \dot{\iota}\omega\mathbf{I} - \dot{\iota}\omega\mathbf{E}_{\min(i,j)} | \overline{|\mathbf{O}_{ji} + \dot{\iota}\omega\mathbf{I} - \dot{\iota}\omega\mathbf{E}_{\min(i,j)}|} \\
    & + 2 \sum_{\substack{j=1 \\ j\neq i}}^n \sum_{\substack{k=1 \\ k \neq i}}^{j-1}  l_{jm}l_{km} \mathrm{Re} \biggl[|\mathbf{O}_{ji} + \dot{\iota}\omega\mathbf{I} - \dot{\iota}\omega\mathbf{E}_{\min(i,j)}|  
    \overline{|\mathbf{O}_{ki} + \dot{\iota}\omega\mathbf{I} - \dot{\iota}\omega\mathbf{E}_{\min(i,k)}|} \biggr] \biggr)
    \label{eq:Os2}
\end{split}
\end{equation}

Using the definitions of the functions above, we can write Eq.~\ref{eq:Os2} as,
\begin{equation}
\begin{split}
    P^{ii}(\omega) = 
    \sum_{m=1}^n \sigma^2_m \biggl(& l_{im}^2 d(\omega; \mathbf{O}_{ii}) + \sum_{\substack{j=1 \\ j\neq i}}^n \sum_{\substack{k=1 \\ k \neq i}}^{j-1}  l_{jm}l_{km} t_1(\omega; \mathbf{O}_{ji},\mathbf{O}_{ki},\mathbf{e}_{\min( i,j )},\mathbf{e}_{\min( i,k )}) \\
    & + \sum_{\substack{j=1 \\ j\neq i}}^n  l_{jm}^2 f(\omega; \mathbf{O}_{ji},\mathbf{e}_{\min( i,j )}) - \sum_{\substack{j=1 \\ j\neq i}}^n   l_{im}l_{jm} s_1(\omega; \mathbf{O}_{ii},\mathbf{O}_{ji},\mathbf{e}_{\min( i,j )})  \biggr)
    \label{eq:Os3}
\end{split}
\end{equation}
Eq.~\ref{eq:Os3} defines the closed form polynomial expansion of $P_{ii}(\omega)$ in terms of $\omega$. We can use the corresponding functions to find the coefficient of $\omega^{2\alpha}$. In general, we can write the auto-spectrum for the $i^{th}$ variable as the following polynomial expansion,
\begin{equation}
\begin{aligned}
    {\mathcal{S}}^{ii}(\omega) = \frac{P^{ii}(\omega)}{Q(\omega)} = \frac{p_0^{ii} + p_1^{ii}\omega^2 + p_2^{ii}\omega^4 + ...+p_{\alpha}^{ii}\omega^{2\alpha}+...+p_{n-1}^{ii}\omega^{2n-2}}{q_0 + q_1\omega^2 + q_2\omega^4 + ...+q_{\alpha}\omega^{2\alpha}+...+q_{n}\omega^{2n}} \label{eq:psd_p_q}
\end{aligned}
\end{equation}
Such that,
\begin{equation}
\begin{aligned}
    P^{ii}(\omega) = p^{ii}_0 + p^{ii}_1\omega^2 + p^{ii}_2\omega^4 + ...+p^{ii}_{\alpha}\omega^{2\alpha}+...+p^{ii}_{n-1}\omega^{2n-2}
\end{aligned}
\end{equation}
Where $p_\alpha$ and $q_\alpha$ are coefficients of different even powers of $\omega$ in the numerator and the denominator, respectively. In general for the $i^{th}$ variable, using the Eq.~\ref{eq:Os3}, we can write $p_\alpha$ as, 
\begin{equation}
\begin{split}
   p^{ii}_\alpha  = \sum_{m=1}^n \sigma^2_m \biggl(& l_{im}^2 d^\alpha(\mathbf{O}_{ii}) + \sum_{\substack{j=1 \\ j\neq i}}^n  l_{jm}^2 f^\alpha(\mathbf{O}_{ji},\mathbf{e}_{\min( i,j )}) - \sum_{\substack{j=1 \\ j\neq i}}^n   l_{im}l_{jm} s_1^\alpha(\mathbf{O}_{ii},\mathbf{O}_{ji},\mathbf{e}_{\min( i,j )}) \\&
   + \sum_{\substack{j=1 \\ j\neq i}}^n \sum_{\substack{k=1 \\ k \neq i}}^{j-1}  l_{jm}l_{km} t_1^\alpha(\mathbf{O}_{ji},\mathbf{O}_{ki},\mathbf{e}_{\min( i,j )},\mathbf{e}_{\min( i,k )})\biggr) \label{eq:Os4}
\end{split}
\end{equation}

\textit{\textbf{Special case of independent noise}}:
Now, since the denominator for the auto-spectrum is the same for all the variables and only depends on the Jacobian of the system, we focus on the numerator in the following analysis. We consider special cases of noise correlations and show how the expression for the coefficients of $\omega$ in the numerator simplifies. The dynamical system with uncorrelated noise can be analytically realized by constraining the matrix $\mathbf{L}$ to be diagonal such that $l_{ij}=0$ when $i\neq j$. Using this fact Eq.~\ref{eq:Os4} simplifies to,
\begin{equation}
\begin{split}
   p^{ii}_\alpha  = \sigma^2_i \, l_{ii}^2 \,  d^\alpha(\mathbf{O}_{ii}) + \sum_{\substack{j=1 \\ j\neq i}}^n \sigma^2_j \,  l_{jj}^2 \, f^\alpha(\mathbf{O}_{ji},\mathbf{e}_{\min\{ i,j \}})
    \label{eq:Os5}
\end{split}
\end{equation}

\subsubsection{Cross-spectrum}
We proceed similarly to calculate the polynomial expansion of the cross-power spectral density (cross-spectrum), i.e., the off-diagonal elements of $\mathbf{Z}(\omega)$. Now when $i\neq j$ Eq.\ref{eq:minor1} can be split into four parts as,
\begin{equation}
\begin{split}
    Z^{ij}(\omega) = \sum_{m=1}^n \sigma^2_m \biggl(&  (-1)^{2(i+j)} l_{im} l_{jm} M_{ii} \overline{M}_{jj}  + \sum_{\substack{k=1 \\ k\neq j}}^n (-1)^{i+j+i+k} l_{im} l_{km} M_{ii} \overline{M}_{kj}\\
    & + \sum_{\substack{k=1 \\ k\neq i}}^n (-1)^{i+j+k+j} l_{km} l_{jm} M_{ki} \overline{M}_{jj} + \sum_{\substack{k=1 \\ k\neq i}}^n \sum_{\substack{q=1 \\ q \neq j}}^{n}  (-1)^{i+j+k+q} l_{km} l_{qm} M_{ki} \overline{M}_{qj} \biggr)
\end{split} \label{eq:cross-minor2}
\end{equation}
Since the solution is complex, $Z^{ij}(\omega)=P^{ij}(\omega) + \dot{\iota} \omega   P^{ij\prime}(\omega)$. 
After replacing the minors with the determinants of the matrices $\mathbf{O}_{ij}$, we find,
\begin{equation}
\begin{split}
    Z^{ij}(\omega) = \sum_{m=1}^n  &\sigma^2_m  \biggl( l_{im} l_{jm} |\mathbf{O}_{ii}+\dot{\iota}\omega\mathbf{I}| \overline{|\mathbf{O}_{jj}+\dot{\iota}\omega\mathbf{I}|}   + \sum_{\substack{k=1 \\ k\neq j}}^n (-1)^{j+k} l_{im} l_{km} |\mathbf{O}_{ii}+\dot{\iota}\omega\mathbf{I}| (-1)^{|k-j|-1} \overline{|\mathbf{O}_{kj}+\dot{\iota}\omega\mathbf{I} -\dot{\iota} \omega \mathbf{E}_{\min(k,j)} |} \\
    & + \sum_{\substack{k=1 \\ k\neq i}}^n (-1)^{i+k} l_{km}  l_{jm} (-1)^{|k-i|-1} |\mathbf{O}_{ki}+\dot{\iota}\omega\mathbf{I} -\dot{\iota} \omega \mathbf{E}_{\min(k,i)} | \overline{|\mathbf{O}_{jj}+\dot{\iota}\omega\mathbf{I}|}  \\ 
    & + \sum_{\substack{k=1 \\ k\neq i}}^n \sum_{\substack{q=1 \\ q \neq j}}^{n} (-1)^{i+j+k+q} l_{km} l_{qm} (-1)^{|k-i|-1} |\mathbf{O}_{ki}+\dot{\iota}\omega\mathbf{I} -\dot{\iota} \omega \mathbf{E}_{\min(k,i)}|  (-1)^{|q-j|-1} \overline{|\mathbf{O}_{qj}+\dot{\iota}\omega\mathbf{I} -\dot{\iota} \omega \mathbf{E}_{\min(q,j)} |} \biggr)
\end{split} \label{eq:cross-O1}
\end{equation}
Upon further simplification, we get,
\begin{equation}
\begin{split}
   Z^{ij}(\omega)  =  \sum_{m=1}^n  \sigma^2_m &\biggl( l_{im} l_{jm} |\mathbf{O}_{ii}+\dot{\iota}\omega\mathbf{I}| \overline{|\mathbf{O}_{jj}+\dot{\iota}\omega\mathbf{I}|}  + \sum_{\substack{k=1 \\ k\neq i}}^n \sum_{\substack{q=1 \\ q \neq j}}^{n}  l_{km} l_{qm}  |\mathbf{O}_{ki}+\dot{\iota}\omega\mathbf{I} -\dot{\iota} \omega \mathbf{E}_{\min(k,i)}| \overline{|\mathbf{O}_{qj}+\dot{\iota}\omega\mathbf{I} -\dot{\iota} \omega \mathbf{E}_{\min(q,j)} |}\\ & - \sum_{\substack{k=1 \\ k\neq j}}^n  l_{im} l_{km} |\mathbf{O}_{ii}+\dot{\iota}\omega\mathbf{I}| \overline{|\mathbf{O}_{kj}+\dot{\iota}\omega\mathbf{I} -\dot{\iota} \omega \mathbf{E}_{\min(k,j)} |}  - \sum_{\substack{k=1 \\ k\neq i}}^n  l_{km} l_{jm}|\mathbf{O}_{ki}+\dot{\iota}\omega\mathbf{I} -\dot{\iota} \omega \mathbf{E}_{\min(k,i)} | \overline{|\mathbf{O}_{jj}+\dot{\iota}\omega\mathbf{I}|}  \biggr)
\end{split} \label{eq:cross-O2}
\end{equation}
Since we know that the cross-spectrum is a complex expression, we calculate the real and imaginary parts separately. Using the expansion in Eq.~\ref{eq:recursive2}, we can write the complex rational function for the cross-spectrum as,
\begin{equation}
\begin{aligned}
    \mathcal{S}^{ij}(\omega) &= \frac{P^{ij}(\omega) + \dot{\iota} \omega P^{ij\prime}(\omega)}{Q(\omega)} \\
    &= \frac{(p^{ij}_0 + ...+p^{ij}_{\alpha}\omega^{2\alpha}+...+p^{ij}_{n-1}\omega^{2n-2}) + \dot{\iota}\omega (p^{ij\prime}_0 +  ...+p^{ij\prime}_{\alpha}\omega^{2\alpha}+...+p^{ij\prime}_{n-2}\omega^{2n-4})}{q_0 + q_1\omega^2 + q_2\omega^4 + ...+q_{\alpha}\omega^{2\alpha}+...+q_{n}\omega^{2n}} \label{eq:cpsd_p_q}
\end{aligned}
\end{equation}
Where $p^{ij}_\alpha$ and $p^{ij\prime}_\alpha$ are coefficients of different even (real) and odd (imaginary) powers of $\omega$ in the numerator, respectively.

\textit{\textbf{Real part of the numerator}}:
We first find the real part of Eq.~\ref{eq:cross-O2}. Using the function definitions from Section.~\ref{sec:function_def}, we can write the real part of Eq.~\ref{eq:cross-O2} as,
\begin{equation}
\begin{split}
    P^{ij}(\omega) =  \sum_{m=1}^n  \frac{\sigma^2_m}{2}  \biggl(& l_{im} l_{jm} h_1(\omega; \mathbf{O}_{ii}, \mathbf{O}_{jj})  - \sum_{\substack{k=1 \\ k\neq j}}^n  l_{im} l_{km} s_1(\omega; \mathbf{O}_{ii}, \mathbf{O}_{kj}, \mathbf{e}_{\min( k,j)})  - \sum_{\substack{k=1 \\ k\neq i}}^n  l_{km} l_{jm} s_1(\omega; \mathbf{O}_{jj}, \mathbf{O}_{ki}, \mathbf{e}_{\min( k,i)}) \\
    &+ \sum_{\substack{k=1 \\ k\neq i}}^n \sum_{\substack{q=1 \\ q \neq j}}^{n}  l_{km} l_{qm} t_1(\omega; \mathbf{O}_{ki}, \mathbf{O}_{qj}, \mathbf{e}_{\min(i,k)}, \mathbf{e}_{\min(j,q)}) \biggr) 
\end{split} \label{eq:cross-O3}
\end{equation}
Eq.~\ref{eq:cross-O3} represents a closed-form polynomial expansion (in terms of $\omega$) of the real part of the cross-spectrum between $x_i$ and $x_j$. Using Eq.~\ref{eq:cross-O3}, we can thus write $p_\alpha$ as,
\begin{equation}
\begin{split}
    p^{ij}_\alpha = \sum_{m=1}^n  \frac{\sigma^2_m}{2}  \biggl(& l_{im} l_{jm} h_1^\alpha(\mathbf{O}_{ii}, \mathbf{O}_{jj}) + \sum_{\substack{k=1 \\ k\neq i}}^n \sum_{\substack{q=1 \\ q \neq j}}^{n}  l_{km} l_{qm} t_1^\alpha(\mathbf{O}_{ki}, \mathbf{O}_{qj}, \mathbf{e}_{\min(i,k)}, \mathbf{e}_{\min(j,q)}) \\
    & - \sum_{\substack{k=1 \\ k\neq j}}^n  l_{im} l_{km} s_1^\alpha(\mathbf{O}_{ii}, \mathbf{O}_{kj}, \mathbf{e}_{\min( k,j)})  - \sum_{\substack{k=1 \\ k\neq i}}^n  l_{km} l_{jm} s_1^\alpha(\mathbf{O}_{jj}, \mathbf{O}_{ki}, \mathbf{e}_{\min( k,i)}) \biggr) 
\end{split} \label{eq:cross-O4}
\end{equation}

\textit{\textbf{Imaginary part of the numerator}}:
Next, we find the imaginary part of Eq.~\ref{eq:cross-O2}. Using the function definitions from Section.~\ref{sec:function_def}, we can write the imaginary part of Eq.~\ref{eq:cross-O2} as,
\begin{equation}
\begin{split}
    \omega P^{ij\prime}(\omega) =  \sum_{m=1}^n  \frac{\sigma^2_m}{2} \biggl(&- l_{im} l_{jm} h_2(\omega; \mathbf{O}_{ii}, \mathbf{O}_{jj}) + \sum_{\substack{k=1 \\ k\neq i}}^n \sum_{\substack{q=1 \\ q \neq j}}^{n}  l_{km} l_{qm} t_2(\omega; \mathbf{O}_{ki}, \mathbf{O}_{qj}, \mathbf{e}_{\min(i,k)}, \mathbf{e}_{\min(j,q)}) \\& - \sum_{\substack{k=1 \\ k\neq j}}^n  l_{im} l_{km} s_2(\omega; \mathbf{O}_{ii}, \mathbf{O}_{kj}, \mathbf{e}_{\min( k,j)})  + \sum_{\substack{k=1 \\ k\neq i}}^n  l_{km} l_{jm} s_2(\omega; \mathbf{O}_{jj}, \mathbf{O}_{ki}, \mathbf{e}_{\min( k,i)})  \biggr) 
\end{split} \label{eq:cross-O3-im}
\end{equation}
Eq.~\ref{eq:cross-O3-im} represents a closed-form polynomial expansion (in terms of $\omega$) of the imaginary part of the cross-spectrum between $x_i$ and $x_j$. Using Eq.~\ref{eq:cross-O3-im}, we can thus write $p^{ij\prime}_\alpha$ as,
\begin{equation}
\begin{split}
    p^{ij\prime}_\alpha =  \sum_{m=1}^n  \frac{\sigma^2_m}{2} \biggl(& - l_{im} l_{jm} h_2^\alpha(\mathbf{O}_{ii}, \mathbf{O}_{jj}) + \sum_{\substack{k=1 \\ k\neq i}}^n \sum_{\substack{q=1 \\ q \neq j}}^{n}  l_{km} l_{qm} t_2^\alpha(\mathbf{O}_{ki}, \mathbf{O}_{qj}, \mathbf{e}_{\min(i,k)}, \mathbf{e}_{\min(j,q)}) \\
    & - \sum_{\substack{k=1 \\ k\neq j}}^n  l_{im} l_{km} s_2^\alpha(\mathbf{O}_{ii}, \mathbf{O}_{kj}, \mathbf{e}_{\min( k,j)})  + \sum_{\substack{k=1 \\ k\neq i}}^n  l_{km} l_{jm} s_2^\alpha(\mathbf{O}_{jj}, \mathbf{O}_{ki}, \mathbf{e}_{\min( k,i)}) \biggr) 
\end{split} \label{eq:cross-O4-im}
\end{equation}

\textit{\textbf{Special case of independent noise}}:
Now, since the denominator for the cross-spectrum is the same for all the variables and is equal to the auto-spectrum, we focus on the numerator in the following analysis. Along lines similar to the auto-spectrum, we consider the special case of independent (across variables) Gaussian noise and show how the expression for the coefficients of $\omega$ in the numerator simplifies. The dynamical system with uncorrelated noise can be analytically realized by constraining the matrix $\mathbf{L}$ to be diagonal such that $l_{ij}=0$, when $i\neq j$. Using this fact Eq.~\ref{eq:cross-O4} simplifies to,
\begin{equation}
\begin{split}
    p^{ij\prime}_\alpha = &\sum_{\substack{m=1 \\ m\neq (i,j)}}^n \frac{\sigma^2_m}{2}    l_{mm}^2 t_1^\alpha(\mathbf{O}_{mi}, \mathbf{O}_{mj}, \mathbf{e}_{\min(i,m)}, \mathbf{e}_{\min(j,m)}) - \frac{\sigma^2_i}{2} l_{ii}^2 s_1^\alpha(\mathbf{O}_{ii}, \mathbf{O}_{ij}, \mathbf{e}_{\min( i,j)}) - \frac{\sigma^2_j}{2} l_{jj}^2 s_1^\alpha(\mathbf{O}_{jj}, \mathbf{O}_{ji}, \mathbf{e}_{\min( j,i)}).  \label{eq:cross-indp}
\end{split}
\end{equation}
Similarly, Eq.~\ref{eq:cross-O4-im} simplifies to,
\begin{equation}
\begin{split}
    p^{ij\prime}_\alpha = &\sum_{\substack{m=1 \\ m\neq (i,j)}}^n \frac{\sigma^2_m}{2}    l_{mm}^2 t_2^\alpha(\mathbf{O}_{mi}, \mathbf{O}_{mj}, \mathbf{e}_{\min(i,m)}, \mathbf{e}_{\min(j,m)}) - \frac{\sigma^2_i}{2} l_{ii}^2 s_2^\alpha(\mathbf{O}_{ii}, \mathbf{O}_{ij}, \mathbf{e}_{\min( i,j)}) + \frac{\sigma^2_j}{2} l_{jj}^2 s_2^\alpha(\mathbf{O}_{jj}, \mathbf{O}_{ji}, \mathbf{e}_{\min( j,i)}).  \label{eq:cross-indp-imag}
\end{split}
\end{equation}

%\subsection{Independent noise}

\subsubsection{Coherence as a rational function}
Coherence is a measure of the similarity of two signals as a function of frequency. Mathematically, coherence ($\kappa_{ij}$) between two variables $x_i$ and $x_j$ is defined as the ratio of the squared magnitude of the cross-power spectral density and their individual power spectral densities. Given the representation of the cross-spectrum and auto-spectrum in terms of numerator and denominator, using Eq.~\ref{eq:psd_p_q} and \ref{eq:cpsd_p_q}, we can write coherence as a rational function,
\begin{equation}
\begin{split}
    \kappa^{ij} &= \frac{|\mathcal{S}^{ij}(\omega)|^2}{\mathcal{S}^{ii}(\omega)\mathcal{S}^{jj}(\omega)} = \frac{|P^{ij}(\omega) + \dot{\iota} \omega P^{ij\prime}(\omega)|^2}{P^{ii}(\omega)P^{jj}(\omega)} = \frac{ \left(\sum\limits^{n-1}_{\alpha=0} p_{\alpha}^{ij} \omega^{2\alpha}\right)^2 + \omega^2 \left(\sum\limits^{n-2}_{\alpha=0} p_{\alpha}^{ij\prime} \omega^{2\alpha}\right)^2}{\left(\sum\limits^{n-1}_{\alpha=0}p_{\alpha}^{ii}\omega^{2\alpha} \right)\left(\sum\limits^{n-1}_{\alpha=0}p_{\alpha}^{jj}\omega^{2\alpha} \right)},
\end{split}
\end{equation}
where the superscript of $p$ represents that the coefficient comes from the numerator of the polynomial for cross-spectrum ($ij$) or the auto-spectrum ($ii$ or $jj$).

\newpage
\begin{figure*}[h]
    \centering
    \includegraphics[width=0.9\linewidth]{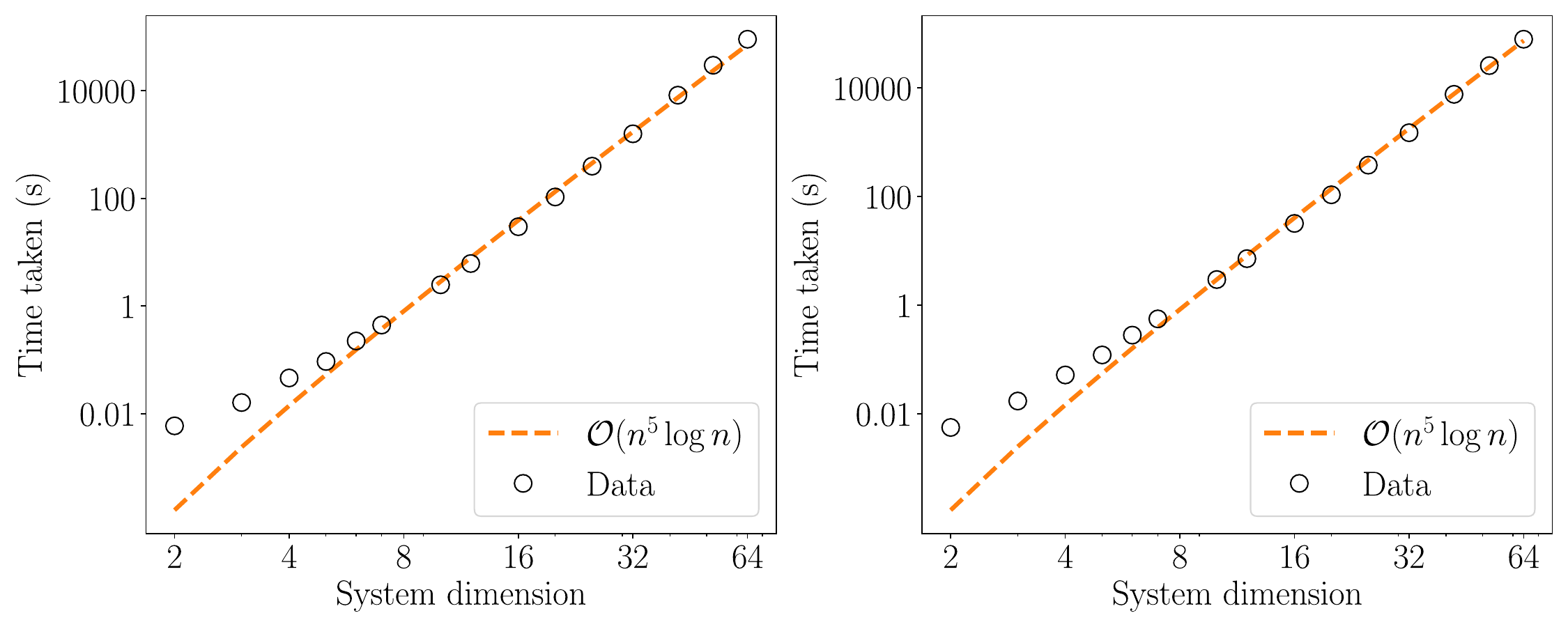}
    \caption{\textbf{Time complexity of the element-wise algorithm.} The time complexity of finding the auto-spectrum (left) of a given variable and cross-spectrum (right) between any two variables for a non-sparse SDE with additive Gaussian white noise with random correlations using \textit{rational computations}. The algorithms run with time complexity of $\mathcal{O}(n^5 \log n )$.}
    \label{fig:tc}
\end{figure*}

\newpage
\section{Models in detail}
In this section, we describe in more detail the various models that are considered for analysis in the main text. 

\subsection{Fitzhugh–Nagumo model}
The SDEs describing the model can be written as,
\begin{equation}\begin{split}
    \frac{\mathrm{d}v}{\mathrm{d}t} &= \biggl(v - \frac{v^3}{3} - w + I \biggr) + h_1(v) \eta_1 \\ \frac{\mathrm{d}w}{\mathrm{d}t} &= \epsilon (v + \alpha -\beta w ) + h_2(w) \eta_2  \label{eq:fzn1}
\end{split}
\end{equation}
The analytical steady-state solution of the deterministic system $(v_e, w_e)$ can be found by setting the RHS of Eq.~\ref{eq:fzn1} equal to $0$. Therefore, $v_e$ is the solution to the cubic equation,
\begin{equation}
    v^3 + 3\left(\frac{1}{\beta}-1\right) v + 3 \left(\frac{\alpha}{\beta} - I\right) = 0
\end{equation}
and $w_e = (v_e + \alpha)/\beta$. The sign of the discriminant, $\Delta = (1/\beta-1)^3 + \frac{9}{4}(\alpha/\beta -I)^2$, of the cubic equation determines if the system admits one or more fixed points. We pick the parameters such that $\Delta>0$, yielding a system with a global attractor. Since the solution to $v_e$ is from a depressed cubic, it can be analytically expressed in terms of $\Delta$ using Cardano's formula, 
\begin{equation}
    v_e = \sqrt[3]{-\frac{3}{2} \left(\frac{\alpha}{\beta} - I\right) - \sqrt{\Delta}} + \sqrt[3]{-\frac{3}{2} \left(\frac{\alpha}{\beta} - I\right) + \sqrt{\Delta}}.
\end{equation}
The parameters used to simulate the system are $I = 0.265$, $\alpha = 0.7$, $\beta = 0.75$ and $\epsilon = 0.08$ that yields a fixed point solution $v_e = -1.00125$, $w_e = -0.401665$. We consider the case of multiplicative noise and set $h_1(v)=0$ and $h_2(w)=\sigma w$ with $\sigma=0.001$.

\subsection{Hindmarsh-Rose}
The parameters used for simulation are as follows: $I=5.5$, $b=0.5$, $\mu=0.01$, $x_{rest}=-1.6$, $s=4$ and $\sigma=0.001$. In addition to the auto-spectrum shown in Fig.~3 of the main text, we confirm that the solutions for cross-spectrum and coherence between the variables $x$ and $y$ match across the entire range of frequencies (Fig.~\ref{fig:psd_hr}).

\subsection{Wilson-Cowan model}
The parameters for simulations and analytical calculations are as follows: The time constants: $\tau_\mathrm{E} = 2$ms, $\tau_\mathrm{I} = 8$ms, $\tau_{s_\mathrm{E}} = 10$ms and $\tau_{s_\mathrm{I}} = 10$ms. The strength of the weights: $w_{\mathrm{EE}}=5$, $w_{\mathrm{EI}}=5$, $w_{\mathrm{IE}}=3.5$ and $w_{\mathrm{II}}=3$. The offset of the input: $\theta_\mathrm{E}=0.4$ and $\theta_\mathrm{I}=0.4$. The scaling of the input: $\kappa_\mathrm{E}=0.2$ and $\kappa_\mathrm{I}=0.02$. The scaling of the population activity: $\gamma_\mathrm{E}=1$ and $\gamma_\mathrm{I}=2$. The standard deviation of the noise: $\eta_1=\eta_2=0.001$ and $\eta_3=\eta_4=0.002$. Finally the external inputs to the system: $I_\mathrm{E}=1$, $I_\mathrm{I}=0.5$, $s_\mathrm{E}^0=0.2$ and $s_\mathrm{I}^0=0.05$.

\subsection{Stabilized Supralinear Network (SSN)}
The input, $\mathbf{h}(\mathbf{x})$, is defined to be discretized into 11 points in space, where the location of each point (in arbitrary units of length) is given by the vector $\mathbf{x}$. The input is centered at 0 with the distance between any two consecutive points $\Delta x = 3$. The input at a given location $x$ is given by,
\begin{equation}
    \mathbf{h}(x) = c\left(\frac{1}{1+e^{-\frac{x+l/2}{\sigma_{RF}}}} \right) \left(1-\frac{1}{1+e^{-\frac{x-l/2}{\sigma_{RF}}}} \right)
\end{equation}
where $l=9$ is the length of the stimulus, and $\sigma_{RF}=0.125\Delta x$ defines the width of the edges of the input.

The E-E (excitatory to excitatory) and I-E (inhibitory to excitatory) connections are given by a Gaussian function. The $i,j$ entry of the weight matrix is defined as follows,
\begin{equation}
    \mathbf{W}_{\alpha \mathrm{E}}(j,k) = J_{\alpha \mathrm{E}}e^{-\frac{(x_j-y_k)^2}{2\sigma_{\alpha \mathrm{E}}^2}}, \quad \alpha \in \{ \mathrm{E}, \mathrm{I}\}
\end{equation}
where $x_i$ and $y_j$ are the locations of the $i^{th}$ and $j^{th}$ neurons, respectively. $J_{ \mathrm{ \mathrm{EE}}} = 2.0$, $J_{ \mathrm{IE}} = 2.25$ define the magnitude of the weights, and $\sigma_{ \mathrm{EE}}=4$ and $\sigma_{ \mathrm{IE}}=8$ are the parameters of the Gaussians determining the spatial extent of the connections between E-E and I-E populations, respectively. The E-I and I-I connections are not fully-connected but self-inhibitory with the weights $J_{ \mathrm{EI}} = 0.9$, $J_{ \mathrm{II}} = 0.5$.

The intrinsic time constants of the excitatory and inhibitory neurons are $\tau_\mathrm{E}=6$ms and $\tau_\mathrm{I}=4$ms. The supralinear activation parameters are $n=2.2$ and $k=0.01$. The standard deviation of the noise added $\mathbf{\eta}_\mathrm{E}(x_i) = \mathbf{\eta}_\mathrm{I}(x_i) = 0.01$.

\subsection{Rock-Paper-Scissors-Lizard-Spock}
Here we show the results for the 5-dimensional version of the Rock-Paper-Scissors game. This game of Rock-Paper-Scissors-Lizard-Spock can be described by the pay-off matrix  ($\mathbf{P}$) given below. Note that the player and the opponent choose their strategies from the rows and the columns of the table, respectively. 1 indicates you win against your opponent, -1 indicates you lose, and 0 indicates you neither win nor lose against your opponent.
\begin{center}
\begin{tabular}{|| c || c | c | c | c | c ||} 
 \hline
  & R & P & S & Sp & L \\ 
 \hline\hline
 R & 0 & -1 & 1 & -1 & 1 \\ 
 \hline
 P & 1 & 0 & -1 & 1 & -1  \\
 \hline
 S & -1 & 1 & 0 & -1 & 1  \\
 \hline
 Sp & 1 & -1 & 1 & 0 & -1 \\
 \hline
 L & -1 & 1 & -1 & 1 & 0 \\ 
 \hline
\end{tabular}
\end{center}
The equations describing the evolution of populations following a certain strategy with global mutations ($\mu=0.01$) and multiplicative Gaussian white noise (with standard deviation $\sigma=0.01$) are defined as follows,
\begin{equation}
\begin{split}
    \dot{x_1} &= x_1 (-x_2 + x_3 - x_4 + x_5 - \phi) + \mu(-4x_1 + x_2 + x_3 + x_4 + x_5) +\sigma x_1 \eta_1 \\
    \dot{x_2} &= x_2 (x_1 - x_3 + x_4 - x_5 - \phi) + \mu(-4x_2 + x_1 + x_3 + x_4 + x_5) +\sigma x_2\eta_2 \\
    \dot{x_3} &= x_3 (-x_1 + x_2 - x_4 + x_5 - \phi) + \mu(-4x_3 + x_1 + x_2 + x_4 + x_5) +\sigma x_3\eta_3 \\
    \dot{x_4} &= x_4 (x_1 - x_2 + x_3 - x_5 - \phi) + \mu(-4x_4 + x_1 + x_2 + x_3 + x_5) +\sigma x_4\eta_4 \\
    x_1 &+ x_2 + x_3 + x_4 + x_5 = 1
\end{split}
\end{equation}
The fixed point solution of the equations above is $x_1^{ss} = x_2^{ss} = x_3^{ss} = x_4^{ss} = 1/5$. Therefore, the effective standard deviation of the noise at steady-state is $\sigma_e=\sigma/5$. Further, the Jacobian matrix is given by,
\begin{equation}
    \mathbf{J} = 
    \begin{bmatrix}
    -1/5 - 5\mu & -2/5 & 0 & -2/5\\
     2/5 & 1/5 - 5\mu & 0 & 2/5\\
    -2/5 & 0 & -1/5 - 5\mu & -2/5\\
    2/5 & 0 & 2/5 & 1/5 - 5\mu\\
    \end{bmatrix}
\end{equation}

Using our analytical solution, we have access to the coefficients of $\omega$ in the numerator and the denominator. Therefore, we can write the rational function for the auto-spectrum of the Paper population as,
\begin{equation}
    \mathcal{S}^{22}(\omega) = \frac{p_0 + p_1 \omega^2 + p_2 \omega^4 + p_3 \omega^6}{q_0 + q_1 \omega^2 + q_2 \omega^4 + q_3 \omega^6 + q_4 \omega^8} \label{eq:rational_full_rps}
\end{equation}
with the coefficients for the numerator given by,
\begin{equation}
\begin{split}
    p_0 &=  \left(15625 \mu ^6+1250 \mu ^5+575 \mu ^4+28 \mu ^3+\frac{79 \mu ^2}{25}-\frac{6 \mu }{125}+\frac{1}{625}\right) \sigma_e ^2\\
    p_1 &=  \left(1875 \mu ^4+100 \mu ^3+18 \mu ^2+\frac{36 \mu }{25}+\frac{83}{625}\right) \sigma_e ^2 \\
    p_2 &= \left(75 \mu ^2+2 \mu -\frac{1}{5}\right) \sigma_e ^2 \\
    p_3 &=  \sigma_e ^2
\end{split}
\end{equation}
and for the denominator given by,
\begin{equation}
\begin{split}
    q_0 &= 390625 \mu ^8+12500 \mu ^6+110 \mu ^4+\frac{4 \mu ^2}{25}+\frac{1}{15625} \\
    q_1 &=  62500 \mu ^6+500 \mu ^4+\frac{28 \mu ^2}{5}-\frac{4}{625} \\
    q_2 &= 3750 \mu ^4-20 \mu ^2+\frac{22}{125} \\
    q_3 &=  100 \mu ^2-\frac{4}{5} \\
    q_4 &= 1
\end{split}
\end{equation}
Similarly, we can write the coefficients for the rational function for the cross-spectrum (between the Rock and Paper populations). These coefficients (for auto/cross-spectrum) define the spectral density over the entire frequency domain. Since the elements of the Jacobian matrix depend on the mutation rate $\mu$ and on the noise standard deviation $\sigma$, the coefficients $p_i$ and $q_i$ are also a function of $\mu$ and $\sigma$. We first confirm that the solutions for the auto-spectrum (for the paper population), cross-spectrum, and coherence (between the Rock and the Paper populations) match across the range of frequencies for the different methods (Fig.~\ref{fig:psd_rps}). We then plot (Fig.~\ref{fig:coeff_rps}) the coefficients $p_i$ and $q_i$ for varying mutation rates and find the values of $\mu$ for which the sign of any of the coefficients changes. We also plot the PSD corresponding to those mutation rates and find that these values of $\mu$ correspond to the emergence of oscillations (marked by the appearance of a peak in the PSD).

We can approximate the spectra to isolate the low frequency peak in the spectrum by discarding the terms with higher powers of $\omega$ which gives us,
\begin{equation}
    \mathcal{S}^{22}_{\mathrm{small}}(\omega) = \frac{p_0 + p_1 \omega^2}{q_0 + q_1 \omega^2 + q_2 \omega^4} \label{eq:rational_small_rps}
\end{equation}
Similarly, to isolate the high-frequency peak, we discard the terms with smaller powers of $\omega$,
\begin{equation}
    \mathcal{S}^{22}_{\mathrm{large}}(\omega) = \frac{p_0 + p_2 \omega^4 + p_3 \omega^6}{q_0 + q_2 \omega^4 + q_3 \omega^6 + q_4 \omega^8} \label{eq:rational_large_rps}
\end{equation}
See Fig.~9 in the main text.

\newpage
\begin{figure*}[h]
    \centering
    \includegraphics[width=\linewidth]{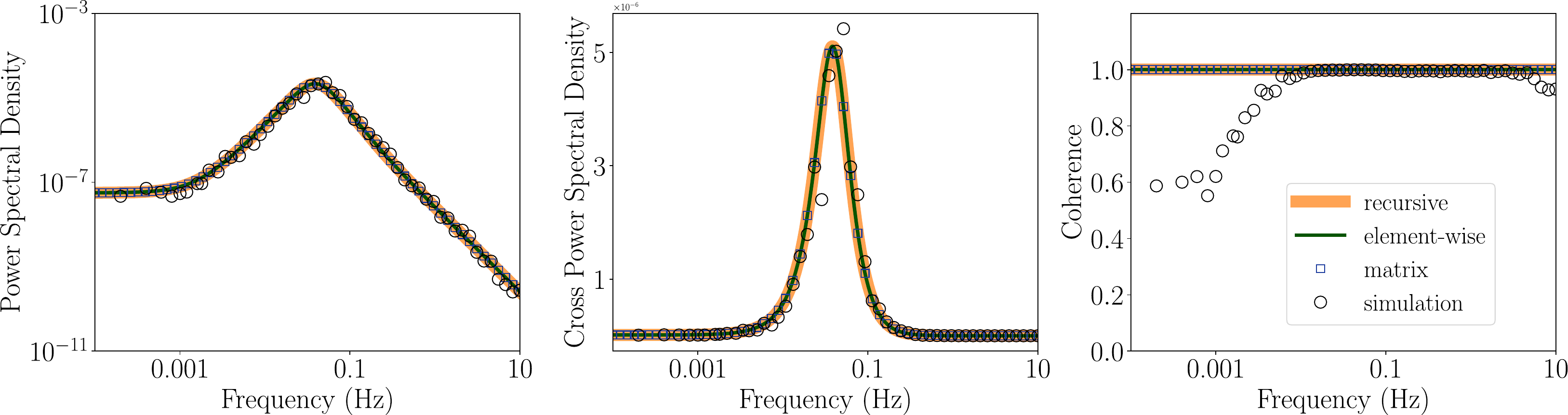}
    \caption{\textbf{Spectral density (auto and cross) and coherence for the Hindmarsh-Rose model.} We plot the power spectral density (left) of the membrane potential $x$ (Fig.~3 of the main text), the absolute cross-power spectral density (middle) between $x$ and $y$, and the coherence (right) between $x$ and $y$, each calculated via simulation, the rational function solutions, and the matrix solution.}
    \label{fig:psd_hr}
\end{figure*}
\newpage
\begin{figure}[h]
    \centering
    \includegraphics[width=\linewidth]{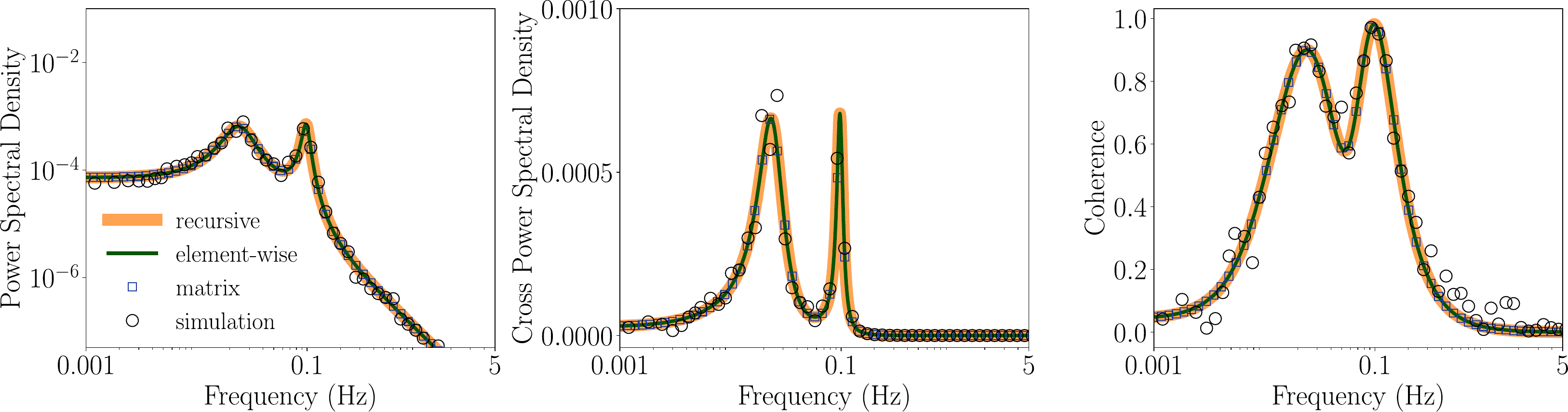}
    \caption{\textbf{Spectral density (auto and cross) and coherence for the Rock-Paper-Scissors-Lizard-Spock model.} We plot the power spectral density (left), the absolute cross power spectral density (middle), and the coherence (right) for a fixed mutation parameter ($\mu=0.01$) each calculated via simulation, the rational function solutions, and the matrix solution. The PSD is calculated for the Paper population. The cross-PSD and coherence are calculated between the Rock and Paper populations.} 
    \label{fig:psd_rps}
\end{figure}
% \newpage
\begin{figure}[h]
    \centering
    \includegraphics[width=\linewidth]{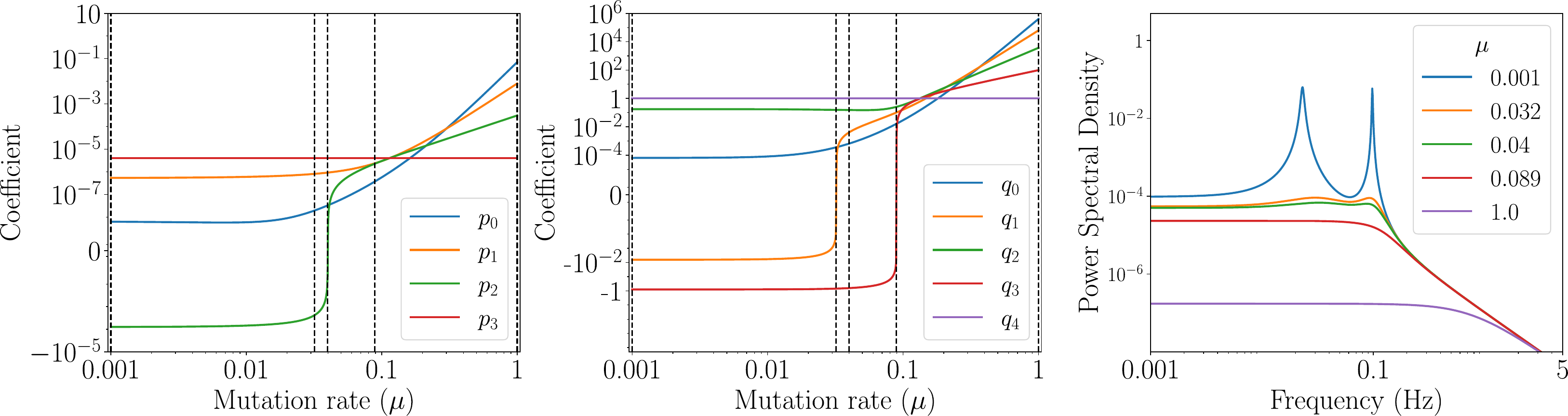}
    \caption{\textbf{Coefficients of the rational function for the auto-spectrum of Paper population in the Rock-Paper-Scissors-Lizard-Spock model.} We plot the coefficients of the numerator (left) and the denominator (middle) of the rational function of the auto-spectrum of the Paper population for a range of mutation rates $\mu$. We identify the mutation rates at which the various coefficients change sign (plotted as vertical dotted lines in the left and middle plots) and plot the corresponding power spectral density. The analysis is done for the Paper population.} 
    \label{fig:coeff_rps}
\end{figure}
% \newpage

\newpage

\section{Detailed function definitions} \label{sec:function_def}
In what follows, we define the set of functions that facilitates the derivation of expressions for auto and cross spectrums.

\subsection{$d(\omega;\mathbf{A})$} 
% \label{function:d}
The function $d(\omega;\mathbf{A} \in \R^{n\times n})$ expands the following determinant product into the corresponding polynomial in $\omega$. Here, $\mathbf{A} \in \R^{n\times n}$.
\begin{equation}
\begin{split}
    d(\omega;\mathbf{A}) &= |\mathbf{A} + \dot{\iota}\omega\mathbf{I}| \, |\mathbf{A} - \dot{\iota}\omega\mathbf{I}| \\
    &= |\mathbf{A} + \dot{\iota}\omega\mathbf{I}| \, \overline{|\mathbf{A} + \dot{\iota}\omega\mathbf{I}|} \\
    &= ||\mathbf{A} + \dot{\iota}\omega\mathbf{I}||^2 \label{eq:d_gen1}
\end{split}
\end{equation}
Denoting $\lambda_i \in \C$ to be the eigenvalues of the matrix $\mathbf{A}$, using Eq.~\ref{eq:imp2}, we can write the modulus of the determinant of the complex matrix as,
\begin{equation}
\begin{split}
    d(\omega;\mathbf{A}) &= \prod_{i=1}^n |\lambda_i + \dot{\iota} \omega|^2 \\
    &= \prod_{i=1}^n (\lambda_i^2 + \omega^2)
\end{split}
\end{equation}
Writing this product as a function of elementary symmetric polynomials (Eq.~\ref{eq:esp}), we find,
\begin{equation}
    d(\omega;\mathbf{A}) = \sum_{j=0}^{n} e^{n-j} \omega^{2j}
\end{equation}
Where, $e^j$ is the $j^{th}$ elementary symmetric polynomial with $x_k={\lambda_k}^2$, and can be calculated recursively using the Newton's identities Eq.~\ref{eq:newton1}. We can also express them as Bell polynomials (Eq.~\ref{eq:bell10}),
\begin{equation}
    d(\omega;\mathbf{A}) = \sum_{j=0}^{n} \frac{(-1)^{n-j}}{{(n-j)}!} B(\mathbf{r}^{n-j}(\mathbf{A}^2)) \omega^{2j}
\end{equation}
Here, $\mathbf{r}^j(\mathbf{X})$ is defined in Eq.~\ref{eq:r_j} and $B(\mathbf{r}^{n-j})$ can be computed using Eq.~\ref{eq:bell}. Now we define $d^\alpha(\mathbf{A})$, where $\alpha \in \{0, \ldots, n \}$, to be the coefficient of $\omega^{2\alpha}$ from the polynomial expansion of $d(\omega;\mathbf{A})$, namely,
\begin{equation}
    d^\alpha(\mathbf{A}) =  \frac{(-1)^{n-\alpha}}{(n-\alpha)!} B(\mathbf{r}^{n-\alpha}(\mathbf{A}^2))
\end{equation}

\subsection{$g(\omega; \mathbf{A}, \mathbf{B})$} \label{function:g}
The function $g(\omega; \mathbf{A} \in \R^{n\times n}, \mathbf{B} \in \R^{n-1\times n-1})$ is defined as the product of the following determinants,
\begin{equation}
    g(\omega; \mathbf{A}, \mathbf{B})= 2 \omega \,\overline{\left|\mathbf{A} + \dot{\iota}\omega\mathbf{I} \right|} \left|\mathbf{B} + \dot{\iota}\omega\mathbf{I} \right| \label{eq:g1}
\end{equation}
 If $\lambda_k \in \C$ are the eigenvalues of matrix $\mathbf{A}$ and $\beta_k \in \C$ are the eigenvalues of matrix $\mathbf{B}$, using Eq.~\ref{eq:imp2}, we can write the equation above as,
 \begin{equation}
    g(\omega; \mathbf{A}, \mathbf{B}) = 2 \omega \prod_{j=1}^n (\lambda_j - \dot{\iota}\omega)  \prod_{k=1}^{n-1} (\beta_k + \dot{\iota}\omega)  
\end{equation}
Expanding the polynomial into a sum containing elementary symmetric polynomials (Eq.~\ref{eq:esp}) we find,
\begin{equation}
    g(\omega; \mathbf{A}, \mathbf{B}) = 2\omega \left(\sum_{j=0}^{n} (-1)^j (\dot{\iota})^{j} e_1^{n-j} \omega^{j}  \right) \left(\sum_{k=0}^{n-1} (\dot{\iota})^{k} e_2^{n-k-1} \omega^{k}  \right)   \label{eq:g11}
\end{equation}
where, $e_1^i$ and $e_2^i$ are the elementary symmetric polynomials with $x_k=\lambda_k$ and $x_k=\beta_k$, respectively. Using the property of product of polynomials in Eq.~\ref{eq:pol_prod}, we expand Eq.~\ref{eq:g11} in $\omega$ and find the coefficients of the powers of $\omega$,
\begin{equation}
\begin{split}
    g(\omega; \mathbf{A}, \mathbf{B}) &= 2\omega  \sum_{p=0}^{2n-1} \sum_{j+k=p} (-1)^j (\dot{\iota})^{j+k} e_1^{n-j} e_2^{n-k-1} \omega^{p}   \\
    & = 2\omega \sum_{p=0}^{2n-1} \sum_{j+k=p} (-1)^j (\dot{\iota})^{p} e_1^{n-j} e_2^{n-k-1} \omega^{p}  \label{eq:iota11}
\end{split}
\end{equation}
subject to the condition,
\begin{equation}
    0 \leq j \leq n , \quad 0 \leq k \leq n-1
\end{equation}
We can divide Eq.~\ref{eq:iota11} into real and complex parts by dividing the iterator ($p$) into even and odd components,
\begin{equation}
    g(\omega; \mathbf{A}, \mathbf{B}) = 2\omega  \sum_{p=0}^{n-1} \sum_{j+k=2p}  (-1)^j (\dot{\iota})^{2p} e_1^{n-j} e_2^{n-k-1} \omega^{2p}   + 2\omega  \sum_{p=0}^{n-1} \sum_{j+k=2p+1} (-1)^j (\dot{\iota})^{2p+1} e_1^{n-j} e_2^{n-k-1} \omega^{2p+1} \label{eq:iota2}
\end{equation}
Upon further simplification, we obtain,
\begin{equation}
    g(\omega; \mathbf{A}, \mathbf{B}) = 2\omega  \sum_{p=0}^{n-1} \sum_{j+k=2p}  (-1)^{j+p}  e_1^{n-j} e_2^{n-k-1} \omega^{2p}   + 2\dot{\iota}\omega  \sum_{p=0}^{n-1} \sum_{j+k=2p+1} (-1)^{j+p}  e_1^{n-j} e_2^{n-k-1} \omega^{2p+1} \label{eq:iota2}
\end{equation}
We split $g(\omega; \mathbf{A}, \mathbf{B})$ into its real and imaginary parts as follows,
\begin{equation}
\begin{split}
    g_1(\omega; \mathbf{A}, \mathbf{B}) &= 2 \omega \,\mathrm{Re}\left[ \overline{\left|\mathbf{A} + \dot{\iota}\omega\mathbf{I} \right|} \left|\mathbf{B} + \dot{\iota}\omega\mathbf{I} \right| \right] \\
    &= 2\omega  \sum_{p=0}^{n-1} \sum_{j+k=2p}  (-1)^{j+p}  e_1^{n-j} e_2^{n-k-1} \omega^{2p} 
\end{split}
\end{equation}
\begin{equation}
\begin{split}
    g_2(\omega; \mathbf{A}, \mathbf{B}) &=  2 \omega \,\mathrm{Im}\left[ \overline{\left|\mathbf{A} + \dot{\iota}\omega\mathbf{I} \right|} \left|\mathbf{B} + \dot{\iota}\omega\mathbf{I} \right| \right] \\ &= 2\omega  \sum_{p=0}^{n-1} \sum_{j+k=2p+1} (-1)^{j+p}  e_1^{n-j} e_2^{n-k-1} \omega^{2p+1} \label{eq:g2_gen}
\end{split}
\end{equation}
such that,
\begin{equation}
    g(\omega; \mathbf{A}, \mathbf{B}) = g_1(\omega; \mathbf{A}, \mathbf{B}) + \dot{\iota} g_2(\omega; \mathbf{A}, \mathbf{B}) \label{eq:sum_g}
\end{equation}
Replacing the elementary symmetric polynomials by Bell polynomials (using Eq.~\ref{eq:bell10}) we obtain,
\begin{equation}
\begin{split}
    &g_1(\omega; \mathbf{A}, \mathbf{B}) = 2\, \sum_{p=0}^{n-1} \sum_{j+k=2p} \frac{(-1)^{p-k-1}}{(n-j)!(n-k-1)!} B(\mathbf{r}^{n-j}(\mathbf{A})) B(\mathbf{r}^{n-k-1}(\mathbf{B})) \omega^{2p+1} \\
    &g_2(\omega; \mathbf{A}, \mathbf{B}) = 2\,  \sum_{p=0}^{n-1} \sum_{j+k=2p+1} \frac{(-1)^{p-k-1}}{(n-j)!(n-k-1)!} B(\mathbf{r}^{n-j}(\mathbf{A})) B(\mathbf{r}^{n-k-1}(\mathbf{B})) \omega^{2p+2} 
\end{split}
\end{equation}
Finally, $g_1^\alpha(\mathbf{A}, \mathbf{B})$, for $\alpha \in \{0, \ldots, n-1 \}$, and $g_2^\alpha(\mathbf{A}, \mathbf{B})$, for $\alpha \in \{0, \ldots, n \}$, can be defined as the coefficients of $\omega^{2\alpha+1}$ and $\omega^{2\alpha}$ for $g_1(\omega; \mathbf{A}, \mathbf{B})$ and $g_2(\omega; \mathbf{A}, \mathbf{B})$, respectively,
\begin{equation}
    g_1^\alpha(\mathbf{A}, \mathbf{B}) = 2 \sum_{j+k=2\alpha} \frac{(-1)^{\alpha-j-1}}{(n-j)!(n-k-1)!} B(\mathbf{r}^{n-j}(\mathbf{A})) B(\mathbf{r}^{n-k-1}(\mathbf{B})) 
\end{equation}
\begin{equation}
    g_2^\alpha(\mathbf{A}, \mathbf{B}) = 2 \sum_{j+k=2\alpha-1} \frac{(-1)^{\alpha-j-1}}{(n-j)!(n-k-1)!} B(\mathbf{r}^{n-j}(\mathbf{A})) B(\mathbf{r}^{n-k-1}(\mathbf{B})) 
\end{equation}

\subsection{$h(\omega; \mathbf{A}, \mathbf{B})$} \label{function:h}
The function $h(\omega; \mathbf{A} \in \R^{n\times n}, \mathbf{B} \in \R^{n\times n})$ is defined as the product of the following determinants,
\begin{equation}
    h(\omega; \mathbf{A}, \mathbf{B})= 2 \,\overline{\left|\mathbf{A} + \dot{\iota}\omega\mathbf{I} \right|} \left|\mathbf{B} + \dot{\iota}\omega\mathbf{I} \right| \label{eq:g1}
\end{equation}
 If $\lambda_k \in \C$ are the eigenvalues of matrix $\mathbf{A}$ and $\beta_k \in \C$ are the eigenvalues of matrix $\mathbf{B}$, using Eq.~\ref{eq:imp2}, we can write the equation above as,
 \begin{equation}
    h(\omega; \mathbf{A}, \mathbf{B}) = 2 \prod_{j=1}^n (\lambda_j - \dot{\iota}\omega)  \prod_{k=1}^{n} (\beta_k + \dot{\iota}\omega)  
\end{equation}
Expanding the polynomial into a sum containing elementary symmetric polynomials (Eq.~\ref{eq:esp}) we find,
\begin{equation}
    h(\omega; \mathbf{A}, \mathbf{B}) = 2 \left(\sum_{j=0}^{n} (-1)^j (\dot{\iota})^{j} e_1^{n-j} \omega^{j}  \right) \left(\sum_{k=0}^{n} (\dot{\iota})^{k} e_2^{n-k} \omega^{k}  \right)  \label{eq:h11} 
\end{equation}
where, $e_1^i$ and $e_2^i$ are the elementary symmetric polynomials with $x_k=\lambda_k$ and $x_k=\beta_k$, respectively. Using the property of product of polynomials in Eq.~\ref{eq:pol_prod}, we expand Eq.~\ref{eq:h11} in $\omega$ and find the coefficients of the powers of $\omega$,
\begin{equation}
    h(\omega; \mathbf{A}, \mathbf{B}) = 2 \sum_{p=0}^{2n} \sum_{j+k=p} (-1)^j (\dot{\iota})^{j+k} e_1^{n-j} e_2^{n-k} \omega^{p}    = 2 \sum_{p=0}^{2n} \sum_{j+k=p} (-1)^j (\dot{\iota})^{p} e_1^{n-j} e_2^{n-k-1} \omega^{p}  \label{eq:iota1}
\end{equation}
subject to the condition,
\begin{equation}
    0 \leq j \leq n , \quad 0 \leq k \leq n
\end{equation}
We can divide Eq.~\ref{eq:iota1} into real and complex parts by dividing the iterator ($p$) into even and odd components,
\begin{equation}
    h(\omega; \mathbf{A}, \mathbf{B}) = 2 \sum_{p=0}^{n} \sum_{j+k=2p}  (-1)^j (\dot{\iota})^{2p} e_1^{n-j} e_2^{n-k} \omega^{2p}  + 2 \sum_{p=0}^{n-1} \sum_{j+k=2p+1} (-1)^j (\dot{\iota})^{2p+1} e_1^{n-j} e_2^{n-k} \omega^{2p+1} \label{eq:iota2}
\end{equation}
Upon further simplification, we obtain,
\begin{equation}
    h(\omega; \mathbf{A}, \mathbf{B}) = 2 \sum_{p=0}^{n} \sum_{j+k=2p}  (-1)^{j+p}  e_1^{n-j} e_2^{n-k} \omega^{2p} + 2\dot{\iota}  \sum_{p=0}^{n-1} \sum_{j+k=2p+1} (-1)^{j+p}  e_1^{n-j} e_2^{n-k} \omega^{2p+1} \label{eq:iota2}
\end{equation}
We split $h(\omega; \mathbf{A}, \mathbf{B})$ into its real and imaginary parts as follows,
\begin{equation}
\begin{split}
    h_1(\omega; \mathbf{A}, \mathbf{B}) &= 2 \,\mathrm{Re}\left[ \overline{\left|\mathbf{A} + \dot{\iota}\omega\mathbf{I} \right|} \left|\mathbf{B} + \dot{\iota}\omega\mathbf{I} \right| \right] \\
    &= 2 \sum_{p=0}^{n} \sum_{j+k=2p}  (-1)^{j+p}  e_1^{n-j} e_2^{n-k} \omega^{2p} 
\end{split}
\end{equation}
\begin{equation}
\begin{split}
    h_2(\omega; \mathbf{A}, \mathbf{B}) &=  2 \,\mathrm{Im}\left[ \overline{\left|\mathbf{A} + \dot{\iota}\omega\mathbf{I} \right|} \left|\mathbf{B} + \dot{\iota}\omega\mathbf{I} \right| \right] \\ 
    &= 2  \sum_{p=0}^{n-1} \sum_{j+k=2p+1} (-1)^{j+p}  e_1^{n-j} e_2^{n-k} \omega^{2p+1}
\end{split}
\end{equation}
such that,
\begin{equation}
    h(\omega; \mathbf{A}, \mathbf{B}) = h_1(\omega; \mathbf{A}, \mathbf{B}) + \dot{\iota} h_2(\omega; \mathbf{A}, \mathbf{B}) \label{eq:sum_h}
\end{equation}
Replacing the elementary symmetric polynomials by Bell polynomials (using Eq.~\ref{eq:bell10}) we find,
\begin{equation}
\begin{split}
    &h_1(\omega; \mathbf{A}, \mathbf{B}) = 2\, \sum_{p=0}^{n} \sum_{j+k=2p} \frac{(-1)^{p-k}}{(n-j)!(n-k)!} B(\mathbf{r}^{n-j}(\mathbf{A})) B(\mathbf{r}^{n-k}(\mathbf{B})) \omega^{2p} \\
    &h_2(\omega; \mathbf{A}, \mathbf{B}) = 2\,  \sum_{p=0}^{n-1} \sum_{j+k=2p+1} \frac{(-1)^{p-k}}{(n-j)!(n-k)!} B(\mathbf{r}^{n-j}(\mathbf{A})) B(\mathbf{r}^{n-k}(\mathbf{B})) \omega^{2p+1} 
\end{split}
\end{equation}
Finally, $h_1^\alpha(\mathbf{A}, \mathbf{B})$, for $\alpha \in \{0, \ldots, n \}$, and $h_2^\alpha(\mathbf{A}, \mathbf{B})$, for $\alpha \in \{0, \ldots, n-1 \}$, can be defined as the coefficients of $\omega^{2\alpha}$ and $\omega^{2\alpha+1}$ for $h_1(\omega; \mathbf{A}, \mathbf{B})$ and $h_2(\omega; \mathbf{A}, \mathbf{B})$, respectively,
\begin{equation}
    h_1^\alpha(\mathbf{A}, \mathbf{B}) = 2 \sum_{j+k=2\alpha} \frac{(-1)^{\alpha-k}}{(n-j)!(n-k)!} B(\mathbf{r}^{n-j}(\mathbf{A})) B(\mathbf{r}^{n-k}(\mathbf{B})) 
\end{equation}
\begin{equation}
    h_2^\alpha(\mathbf{A}, \mathbf{B}) = 2 \sum_{j+k=2\alpha+1} \frac{(-1)^{\alpha-k}}{(n-j)!(n-k)!} B(\mathbf{r}^{n-j}(\mathbf{A})) B(\mathbf{r}^{n-k}(\mathbf{B})) 
\end{equation}

\subsection{$f(\omega; \mathbf{A}, \mathbf{e}_{\beta})$} \label{function:f}
The function $f(\omega; \mathbf{A} \in \R^{n\times n}, \mathbf{e}_{\beta} \in \{0,1\}^{n \times 1})$ is defined as the expansion of the following determinant,
\begin{equation}
    f(\omega; \mathbf{A}, \mathbf{e}_{\beta}) = |\mathbf{A} + \dot{\iota}\omega\mathbf{I} - \dot{\iota}\omega\, \mathbf{e}_{\beta} {\mathbf{e}_{\beta}}^\top| \, |\mathbf{A} - \dot{\iota}\omega\mathbf{I} + \dot{\iota}\omega\, \mathbf{e}_{\beta} {\mathbf{e}_{\beta}}^\top|  \label{eq:f_new1}
\end{equation}
The elements of the vector $\mathbf{e}_{\beta}$ is the standard unit vector equal to $1$ only at the index $\beta$ and zero everywhere else. Eq.~\ref{eq:f_new1} is just the modulus-squared of one of the determinants, therefore,
\begin{equation}
\begin{split}
    f(\omega; \mathbf{A}, \mathbf{e}_{\beta}) &= ||\mathbf{A} + \dot{\iota}\omega\, \mathbf{I} - \dot{\iota}\omega\, \mathbf{e}_{\beta} {\mathbf{e}_{\beta}}^\top||^2  \label{eq:f_new2}
  \end{split}
\end{equation}
We use the property from Eq.~\ref{eq:det_prop}, to write Eq.~\ref{eq:f_new2} as,
\begin{equation}
\begin{split}
    f(\omega; \mathbf{A}, \mathbf{e}_{\beta}) = \left| \left|\mathbf{A} + \dot{\iota}\omega\, \mathbf{I} \right| - \dot{\iota}\omega\,  {\mathbf{e}_{\beta}}^\top
  \mathrm{adj}(\mathbf{A} + \dot{\iota}\omega\mathbf{I})
  \mathbf{e}_{\beta} \right| \label{eq:f_new3}
\end{split}
\end{equation}
Since only one of the entries in the vector $\mathbf{e}_{\beta}$ is 1, we can simplify $f(\omega; \mathbf{A}, \mathbf{e}_{\beta})$ by defining a submatrix ($\mathbf{B} \in \R^{n-1 \times n-1}$) of $\mathbf{A}$, by excluding the row and column of $\mathbf{A}$ at the index $\beta$. Eq.~\ref{eq:f_new3} then becomes,
\begin{equation}
    f(\omega; \mathbf{A}, \mathbf{e}_{\beta}) = \left| \left|\mathbf{A} + \dot{\iota}\omega\mathbf{I} \right| - \dot{\iota}\omega \left| \mathbf{B} + \dot{\iota}\omega \mathbf{I} \right| \right|^2
\end{equation}
Expanding the complex expression, we get,
\begin{equation}
    f(\omega; \mathbf{A}, \mathbf{e}_{\beta}) =  \left||\mathbf{A} + \dot{\iota}\omega\mathbf{I}| \right|^2 
    + \omega^2 \left||\mathbf{B} + \dot{\iota}\omega\mathbf{I} |\right|^2 
    - \dot{\iota}\omega \left[ \overline{\left|\mathbf{A} + \dot{\iota}\omega\mathbf{I} \right|} \left|\mathbf{B} + \dot{\iota}\omega\mathbf{I} \right| - \left|\mathbf{A} + \dot{\iota}\omega\mathbf{I} \right| \overline{\left|\mathbf{B} + \dot{\iota}\omega\mathbf{I} \right|} \right] \label{eq:f_10}
\end{equation}
We can also write Eq.~\ref{eq:f_10} as,
\begin{equation}
    f(\omega; \mathbf{A}, \mathbf{e}_{\beta}) =  \left||\mathbf{A} + \dot{\iota}\omega\mathbf{I}| \right|^2 
    + \omega^2 \left||\mathbf{B} + \dot{\iota}\omega\mathbf{I} |\right|^2 
    + 2\, \omega \,\mathrm{Imag}\left[ \overline{\left|\mathbf{A} + \dot{\iota}\omega\mathbf{I} \right|} \left|\mathbf{B} + \dot{\iota}\omega\mathbf{I} \right| \right]
\end{equation}
Using the definition of the functions, $d(\omega; \mathbf{A})$ (Eq.~\ref{eq:d_gen1}) and $g_2(\omega; \mathbf{A}, \mathbf{B})$ (Eq.~\ref{eq:g2_gen}), we can write,
\begin{equation}
\begin{split}
    f(\omega; \mathbf{A}, \mathbf{e}_{\beta}) =  d(\omega; \mathbf{A}) + \omega^2 d(\omega; \mathbf{B}) + g_2(\omega; \mathbf{A}, \mathbf{B})
\end{split}
\end{equation}
Now we denote $f^\alpha(\mathbf{A}, \mathbf{e}_{\beta})$, for $\alpha \in \{0, \ldots, n \}$, to be the coefficient of $\omega^{2\alpha}$ from the polynomial expansion of $f(\omega; \mathbf{A}, \mathbf{e}_{\beta})$,
\begin{equation}
    f^\alpha(\mathbf{A}, \mathbf{e}_{\beta}) =  d^\alpha(\mathbf{A}) + d^{\alpha-1}(\mathbf{B}) + g_2^\alpha(\mathbf{A}, \mathbf{B})
\end{equation}

\subsection{$s(\omega; \mathbf{A}, \mathbf{B}, \mathbf{e}_{\beta})$} \label{function:s}
The function $s(\omega; \mathbf{A} \in \R^{n\times n}, \mathbf{B}\in \R^{n\times n}, \mathbf{e}_{\beta}\in \{0,1\}^{n\times 1})$ is defined as follows,
\begin{equation}
    s(\omega; \mathbf{A}, \mathbf{B}, \mathbf{e}_{\beta}) = 2\, |\mathbf{A} + \dot{\iota}\omega\mathbf{I}| \, \overline{|\mathbf{B} + \dot{\iota}\omega\mathbf{I} - \dot{\iota}\omega\, \mathbf{e}_{\beta} {\mathbf{e}_{\beta}}^\top|}
\end{equation}
Since all the matrices and vectors are purely real, we can write,
\begin{equation}
    s(\omega; \mathbf{A}, \mathbf{B}, \mathbf{e}_{\beta}) = 2\, |\mathbf{A} + \dot{\iota}\omega\mathbf{I}| \, |\mathbf{B} - \dot{\iota}\omega\mathbf{I} + \dot{\iota}\omega\, \mathbf{e}_{\beta} {\mathbf{e}_{\beta}}^\top| \label{eq:s1}
\end{equation}
We use the property from Eq.~\ref{eq:det_prop}, to write Eq.~\ref{eq:s1} as,
\begin{equation}
    s(\omega; \mathbf{A}, \mathbf{B}, \mathbf{e}_{\beta}) = 2\, |\mathbf{A} + \dot{\iota}\omega\mathbf{I}| \, \bigl(|\mathbf{B} - \dot{\iota}\omega\mathbf{I}| + \dot{\iota}\omega\, {\mathbf{e}_{\beta}}^\top \mathrm{adj}(\mathbf{B} - \dot{\iota}\omega\mathbf{I}) \mathbf{e}_{\beta} \bigr)\label{eq:s2}
\end{equation}
Since only one of the entries in the vector $\mathbf{e}_{\beta}$ is 1, and the index is the same for both vectors (row or column), we can simplify $s(\omega; \mathbf{A}, \mathbf{B}, \mathbf{e}_{\beta})$ by defining a submatrix ($\mathbf{C} \in \R^{n-1 \times n-1}$) of $\mathbf{B}$, by excluding the row and column of $\mathbf{B}$ at the index $\beta$. Eq.~\ref{eq:s2} then becomes,
\begin{equation}
    s(\omega; \mathbf{A}, \mathbf{B}, \mathbf{e}_{\beta}) = 2\, |\mathbf{A} + \dot{\iota}\omega\mathbf{I}| \, 
    \bigr( |\mathbf{B} - \dot{\iota}\omega\mathbf{I}| + \dot{\iota} \omega |\mathbf{C} - \dot{\iota}\omega\mathbf{I}| \bigr) \label{eq:s3}
\end{equation}
On expanding the product, we get,
\begin{equation}
\begin{split}
    s(\omega; \mathbf{A}, \mathbf{B}, \mathbf{e}_{\beta}) &= 2\,|\mathbf{A} + \dot{\iota}\omega\mathbf{I}| \, 
    |\mathbf{B} - \dot{\iota}\omega\mathbf{I}| + \dot{\iota} \omega |\mathbf{A} + \dot{\iota}\omega\mathbf{I}||\mathbf{C} - \dot{\iota}\omega\mathbf{I}|\\
    & = 2\, |\mathbf{A} + \dot{\iota}\omega\mathbf{I}| \, 
    |\mathbf{B} - \dot{\iota}\omega\mathbf{I}|  + 2\,\dot{\iota} \omega |\mathbf{A} + \dot{\iota}\omega\mathbf{I}||\mathbf{C} - \dot{\iota}\omega\mathbf{I}|\\
    & = 2\, |\mathbf{A} + \dot{\iota}\omega\mathbf{I}| \, 
    \overline{|\mathbf{B} + \dot{\iota}\omega\mathbf{I}|} + 2\,\dot{\iota} \omega |\mathbf{A} + \dot{\iota}\omega\mathbf{I}||\mathbf{C} - \dot{\iota}\omega\mathbf{I}| \label{eq:f1}
\end{split}
\end{equation}
We split $s(\omega; \mathbf{A}, \mathbf{B}, \mathbf{e}_{\beta})$ into its real and imaginary parts as follows,
\begin{equation}
\begin{split}
    s_1(\omega; \mathbf{A}, \mathbf{B}, \mathbf{e}_{\beta}) &= 2\,\mathrm{Re}\left[|\mathbf{A} + \dot{\iota}\omega\mathbf{I}| \, \overline{|\mathbf{B} + \dot{\iota}\omega\mathbf{I} - \dot{\iota}\omega\, \mathbf{e}_{\beta} {\mathbf{e}_{\beta}}^\top|}\right]\\
    & = 2\,\mathrm{Re}\left[|\mathbf{A} + \dot{\iota}\omega\mathbf{I}| \, 
    \overline{|\mathbf{B} + \dot{\iota}\omega\mathbf{I}|} \right] + 2\,\mathrm{Re}\left[\dot{\iota} \omega |\mathbf{A} + \dot{\iota}\omega\mathbf{I}||\mathbf{C} - \dot{\iota}\omega\mathbf{I}|
    \right] \\ 
    & = 2\,\mathrm{Re}\left[ \overline{|\mathbf{A} + \dot{\iota}\omega\mathbf{I}|} \, 
    |\mathbf{B} + \dot{\iota}\omega\mathbf{I}| \right] - 2\,\omega \mathrm{Im}\left[|\mathbf{A} + \dot{\iota}\omega\mathbf{I}||\mathbf{C} - \dot{\iota}\omega\mathbf{I}|
    \right] \\ 
    & = 2\,\mathrm{Re}\left[ \overline{|\mathbf{A} + \dot{\iota}\omega\mathbf{I}|} \, 
    |\mathbf{B} + \dot{\iota}\omega\mathbf{I}| \right] + 2\,\omega \mathrm{Im}\left[\overline{ |\mathbf{A} + \dot{\iota}\omega\mathbf{I}|}|\mathbf{C} + \dot{\iota}\omega\mathbf{I}|
    \right] 
\end{split}
\end{equation}
\begin{equation}
\begin{split}
    s_2(\omega; \mathbf{A}, \mathbf{B}, \mathbf{e}_{\beta}) &= 2\,\mathrm{Im}\left[|\mathbf{A} + \dot{\iota}\omega\mathbf{I}| \, \overline{|\mathbf{B} + \dot{\iota}\omega\mathbf{I} - \dot{\iota}\omega\, \mathbf{e}_{\beta} {\mathbf{e}_{\beta}}^\top|}\right]\\
    & = 2\,\mathrm{Im}\left[ |\mathbf{A} + \dot{\iota}\omega\mathbf{I}| \, 
    \overline{|\mathbf{B} + \dot{\iota}\omega\mathbf{I}|} \right] + 2\,\mathrm{Im}\left[\dot{\iota} \omega |\mathbf{A} + \dot{\iota}\omega\mathbf{I}||\mathbf{C} - \dot{\iota}\omega\mathbf{I}|
    \right] \\ 
    & = 2\,\mathrm{Im}\left[ |\mathbf{A} + \dot{\iota}\omega\mathbf{I}| \, 
    \overline{|\mathbf{B} + \dot{\iota}\omega\mathbf{I}|} \right] + 2\,\omega \mathrm{Re}\left[\overline{|\mathbf{A} + \dot{\iota}\omega\mathbf{I}|} |\mathbf{C} + \dot{\iota}\omega\mathbf{I}|
    \right]
\end{split}
\end{equation}
so that,
\begin{equation}
\begin{split}
    s(\omega; \mathbf{A}, \mathbf{B}, \mathbf{e}_{\beta}) = s_1(\omega; \mathbf{A}, \mathbf{B}, \mathbf{e}_{\beta}) + \dot{\iota} s_2(\omega; \mathbf{A}, \mathbf{B}, \mathbf{e}_{\beta})
\end{split}
\end{equation}
Using the definitions of functions $g(\omega; \mathbf{A},\mathbf{C})$ (Eq.~\ref{eq:sum_g}) and $h(\omega; \mathbf{A},\mathbf{B})$ (Eq.~\ref{eq:sum_h}), we can write,
\begin{equation}
\begin{split}
    & s_1(\omega; \mathbf{A}, \mathbf{B}, \mathbf{e}_{\beta}) = h_1(\omega; \mathbf{A},\mathbf{B}) + g_2(\omega; \mathbf{A},\mathbf{C}) \\
    & s_2(\omega; \mathbf{A}, \mathbf{B}, \mathbf{e}_{\beta}) = -h_2(\omega; \mathbf{A},\mathbf{B}) + g_1(\omega; \mathbf{A},\mathbf{C})
\end{split}
\end{equation}
Finally, $s_1^\alpha(\mathbf{A}, \mathbf{B}, \mathbf{e}_{\beta})$, for $\alpha \in \{0, \ldots, n \}$, and $s_2^\alpha(\mathbf{A}, \mathbf{B}, \mathbf{e}_{\beta})$, for $\alpha \in \{0, \ldots, n -1\}$, are defined to be the coefficients of $\omega^{2\alpha}$ and $\omega^{2\alpha+1}$ from the polynomial expansion of $s_1(\omega; \mathbf{A}, \mathbf{B}, \mathbf{e}_{\beta})$ and $s_2(\omega; \mathbf{A}, \mathbf{B},  \mathbf{e}_{\beta})$, respectively.
\begin{equation}
\begin{split}
    &s_1^\alpha(\mathbf{A}, \mathbf{B}, \mathbf{e}_{\beta}) = h_1^\alpha(\mathbf{A},\mathbf{B}) + g_2^\alpha(\mathbf{A},\mathbf{C}) \\
    & s_2^\alpha(\mathbf{A}, \mathbf{B}, \mathbf{e}_{\beta}) = -h_2^\alpha(\mathbf{A},\mathbf{B}) + g_1^\alpha(\mathbf{A},\mathbf{C})
\end{split}
\end{equation}

\subsection{$t(\omega; \mathbf{A}, \mathbf{B}, \mathbf{e}_{\beta_1}, \mathbf{e}_{\beta_2})$} \label{function:t}
The function $t(\omega; \mathbf{A} \in \R^{n \times n}, \mathbf{B} \in \R^{n \times n}, \mathbf{e}_{\beta_1} \in \{0,1\}^{n \times 1}, \mathbf{e}_{\beta_2} \in \{0,1\}^{n \times 1})$ is defined as follows,
\begin{equation}
   t(\omega; \mathbf{A}, \mathbf{B}, \mathbf{e}_{\beta_1}, \mathbf{e}_{\beta_2}) = 2\, |\mathbf{A} + \dot{\iota}\omega\mathbf{I} - \dot{\iota}\omega\, \mathbf{e}_{\beta_1} {\mathbf{e}_{\beta_1}}^\top| \, \overline{|\mathbf{B} + \dot{\iota}\omega\mathbf{I} - \dot{\iota}\omega\, \mathbf{e}_{\beta_2} {\mathbf{e}_{\beta_2}}^\top|}
\end{equation}
Since all the matrices and vectors are purely real, we can write,
\begin{equation}
   t(\omega; \mathbf{A}, \mathbf{B}, \mathbf{e}_{\beta_1}, \mathbf{e}_{\beta_2}) = 2\, |\mathbf{A} + \dot{\iota}\omega\mathbf{I} - \dot{\iota}\omega\, \mathbf{e}_{\beta_2} {\mathbf{e}_{\beta_2}}^\top| \, |\mathbf{B} - \dot{\iota}\omega\mathbf{I} + \dot{\iota}\omega\, \mathbf{e}_{\beta_2} {\mathbf{e}_{\beta_2}}^\top| \label{eq:t1}
\end{equation}
We use the property from Eq.~\ref{eq:det_prop}, to write Eq.~\ref{eq:t1} as,
\begin{equation}
    t(\omega; \mathbf{A}, \mathbf{B}, \mathbf{e}_{\beta_1}, \mathbf{e}_{\beta_2}) = 2\,\bigl( |\mathbf{A} + \dot{\iota}\omega\mathbf{I}| -\dot{\iota}\omega {\mathbf{e}_{\beta_1}}^\top \mathrm{adj}(\mathbf{A} + \dot{\iota}\omega\mathbf{I}) \mathbf{e}_{\beta_1} \bigr)\bigl(|\mathbf{B} - \dot{\iota}\omega\mathbf{I}| + \dot{\iota}\omega\, {\mathbf{e}_{\beta_2}}^\top \mathrm{adj}(\mathbf{B} - \dot{\iota}\omega\mathbf{I}) \mathbf{e}_{\beta_2} \bigr)\label{eq:t2}
\end{equation}
Since only one of the entries in the vectors $\mathbf{e}_{\beta_1}$ and $\mathbf{e}_{\beta_2}$ are 1, and the index is the same for both vectors (row or column), we can simplify $t(\omega; \mathbf{A}, \mathbf{B}, \mathbf{e}_{\beta_1}, \mathbf{e}_{\beta_2})$ by defining submatrices $\mathbf{C} \in \R^{n-1 \times n-1}$ and $\mathbf{D} \in \R^{n-1 \times n-1}$, by excluding the $\beta_1$ and $\beta_2$ row and column of the matrices $\mathbf{A}$ and $\mathbf{B}$, respectively. Eq.~\ref{eq:t2} then becomes,
\begin{equation}
    t(\omega; \mathbf{A}, \mathbf{B}, \mathbf{e}_{\beta_1}, \mathbf{e}_{\beta_2}) = 2\,\bigl(|\mathbf{A} + \dot{\iota}\omega\mathbf{I}| -\dot{\iota}\omega |\mathbf{C} + \dot{\iota}\omega\mathbf{I}|\bigr)
    \bigl( |\mathbf{B} - \dot{\iota}\omega\mathbf{I}| + \dot{\iota} \omega |\mathbf{D} - \dot{\iota}\omega\mathbf{I}| \bigr) \label{eq:t3}
\end{equation}
On expanding the product, we get,
\begin{equation}
    t(\omega; \mathbf{A}, \mathbf{B}, \mathbf{e}_{\beta_1}, \mathbf{e}_{\beta_2}) = 2\,
    |\mathbf{A} + \dot{\iota}\omega\mathbf{I}| |\mathbf{B} - \dot{\iota}\omega\mathbf{I}| 
    + 2\, \omega^2 |\mathbf{C} + \dot{\iota}\omega\mathbf{I}| |\mathbf{D} - \dot{\iota}\omega\mathbf{I}|  -2\, \dot{\iota}\omega |\mathbf{C} + \dot{\iota}\omega\mathbf{I}| |\mathbf{B} - \dot{\iota}\omega\mathbf{I}|
    + 2\,\dot{\iota} \omega |\mathbf{A} + \dot{\iota}\omega\mathbf{I}| |\mathbf{D} - \dot{\iota}\omega\mathbf{I}|\bigr] \label{eq:t10}
\end{equation}
We split $t(\omega; \mathbf{A}, \mathbf{B}, \mathbf{e}_{\beta_1}, \mathbf{e}_{\beta_2})$ into its real and imaginary parts as follows,
\begin{equation}
\begin{split}
    t_1(\omega; \mathbf{A}, \mathbf{B}, \mathbf{e}_{\beta_1}, \mathbf{e}_{\beta_2}) &= \, 2\, \mathrm{Re}\left[ |\mathbf{A} + \dot{\iota}\omega\mathbf{I} - \dot{\iota}\omega\, \mathbf{e}_{\beta_1} {\mathbf{e}_{\beta_1}}^\top| \, \overline{|\mathbf{B} + \dot{\iota}\omega\mathbf{I} - \dot{\iota}\omega\, \mathbf{e}_{\beta_2} {\mathbf{e}_{\beta_2}}^\top|}\right] \\
    &= \, 2\,\mathrm{Re}\bigl[ 
    |\mathbf{A} + \dot{\iota}\omega\mathbf{I}| |\mathbf{B} - \dot{\iota}\omega\mathbf{I}| \bigr]
    + 2\,\mathrm{Re}\bigl[ \omega^2 |\mathbf{C} + \dot{\iota}\omega\mathbf{I}| |\mathbf{D} - \dot{\iota}\omega\mathbf{I}|\bigr] \\
    & \quad + 2\,\mathrm{Re}\bigl[-\dot{\iota}\omega |\mathbf{C} + \dot{\iota}\omega\mathbf{I}| |\mathbf{B} - \dot{\iota}\omega\mathbf{I}|\bigr]
    + 2\,\mathrm{Re}\bigl[\dot{\iota} \omega |\mathbf{A} + \dot{\iota}\omega\mathbf{I}| |\mathbf{D} - \dot{\iota}\omega\mathbf{I}|\bigr]
    \bigr]\\
    &= \, 2\,\mathrm{Re}\bigl[ 
    \overline{|\mathbf{A} + \dot{\iota}\omega\mathbf{I}|} |\mathbf{B} + \dot{\iota}\omega\mathbf{I}| \bigr]
    + 2\,\omega^2 \mathrm{Re}\bigl[ \overline{|\mathbf{C} + \dot{\iota}\omega\mathbf{I}|} |\mathbf{D} + \dot{\iota}\omega\mathbf{I}|\bigr] \\
    & \quad + 2\,\omega \mathrm{Im}\bigl[\overline{|\mathbf{B} + \dot{\iota}\omega\mathbf{I}|} |\mathbf{C} + \dot{\iota}\omega\mathbf{I}|\bigr]
    + 2\,\omega \mathrm{Im}\bigl[\overline{|\mathbf{A} + \dot{\iota}\omega\mathbf{I}|} |\mathbf{D} + \dot{\iota}\omega\mathbf{I}|\bigr]
    \bigr]
\end{split}
\end{equation}
\begin{equation}
\begin{split}
    t_2(\omega; \mathbf{A}, \mathbf{B}, \mathbf{e}_{\beta_1}, \mathbf{e}_{\beta_2}) &= \, 2\, \mathrm{Im}\left[ |\mathbf{A} + \dot{\iota}\omega\mathbf{I} - \dot{\iota}\omega\, \mathbf{e}_{\beta_1} {\mathbf{e}_{\beta_1}}^\top| \, \overline{|\mathbf{B} + \dot{\iota}\omega\mathbf{I} - \dot{\iota}\omega\, \mathbf{e}_{\beta_2} {\mathbf{e}_{\beta_2}}^\top|}\right] \\
    &= \, 2\,\mathrm{Im}\bigl[ 
    |\mathbf{A} + \dot{\iota}\omega\mathbf{I}| |\mathbf{B} - \dot{\iota}\omega\mathbf{I}| \bigr]
    + 2\,\mathrm{Im}\bigl[ \omega^2 |\mathbf{C} + \dot{\iota}\omega\mathbf{I}| |\mathbf{D} - \dot{\iota}\omega\mathbf{I}|\bigr] \\
    & \quad + 2\,\mathrm{Im}\bigl[-\dot{\iota}\omega |\mathbf{C} + \dot{\iota}\omega\mathbf{I}| |\mathbf{B} - \dot{\iota}\omega\mathbf{I}|\bigr]
    + 2\,\mathrm{Im}\bigl[\dot{\iota} \omega |\mathbf{A} + \dot{\iota}\omega\mathbf{I}| |\mathbf{D} - \dot{\iota}\omega\mathbf{I}|\bigr]
    \bigr]\\
    &= \, -2\,\mathrm{Im}\bigl[ 
    \overline{|\mathbf{A} + \dot{\iota}\omega\mathbf{I}|} |\mathbf{B} + \dot{\iota}\omega\mathbf{I}| \bigr]
    - 2\,\omega^2 \mathrm{Im}\bigl[ \overline{|\mathbf{C} + \dot{\iota}\omega\mathbf{I}|} |\mathbf{D} + \dot{\iota}\omega\mathbf{I}|\bigr] \\
    & \quad - 2\,\omega \mathrm{Re}\bigl[\overline{|\mathbf{B} + \dot{\iota}\omega\mathbf{I}|} |\mathbf{C} + \dot{\iota}\omega\mathbf{I}|\bigr]
    + 2\,\omega \mathrm{Re}\bigl[\overline{|\mathbf{A} + \dot{\iota}\omega\mathbf{I}|} |\mathbf{D} + \dot{\iota}\omega\mathbf{I}|\bigr]
    \bigr]
\end{split}
\end{equation}
so that,
\begin{equation}
\begin{split}
    t(\omega; \mathbf{A}, \mathbf{B}, \mathbf{e}_{\beta_1}, \mathbf{e}_{\beta_2}) = t_1(\omega; \mathbf{A}, \mathbf{B}, \mathbf{e}_{\beta_1}, \mathbf{e}_{\beta_2}) + \dot{\iota} t_2(\omega; \mathbf{A}, \mathbf{B}, \mathbf{e}_{\beta_1}, \mathbf{e}_{\beta_2})
\end{split}
\end{equation}
Using the definitions of functions $g(\omega; \mathbf{A},\mathbf{C})$ (Eq.~\ref{eq:sum_g}) and $h(\omega; \mathbf{A},\mathbf{B})$ (Eq.~\ref{eq:sum_h}), we can write,
\begin{equation}
\begin{split}
    & t_1(\omega; \mathbf{A}, \mathbf{B}, \mathbf{e}_{\beta_1}, \mathbf{e}_{\beta_2}) = h_1(\omega; \mathbf{A},\mathbf{B}) + \omega^2 h_1(\omega; \mathbf{C},\mathbf{D}) + g_2(\omega; \mathbf{B}, \mathbf{C}) + g_2(\omega; \mathbf{A}, \mathbf{D})\\
    & t_2(\omega; \mathbf{A}, \mathbf{B}, \mathbf{e}_{\beta_1}, \mathbf{e}_{\beta_2}) = -h_2(\omega; \mathbf{A},\mathbf{B}) -\omega^2 h_2 (\omega; \mathbf{C},\mathbf{D}) 
    - g_1(\omega; \mathbf{B},\mathbf{C}) + g_1(\omega; \mathbf{A},\mathbf{D})
\end{split}
\end{equation}
Finally, $t_1^\alpha(\mathbf{A}, \mathbf{B}, \mathbf{e}_{\beta_1}, \mathbf{e}_{\beta_2})$, for $\alpha \in \{0, \ldots, n \}$, and $t_2(\mathbf{A}, \mathbf{B}, \mathbf{e}_{\beta_1}, \mathbf{e}_{\beta_2})$, for $\alpha \in \{0, \ldots, n - 1\}$, are defined to be the coefficients of $\omega^{2\alpha}$ and $\omega^{2\alpha+1}$ from the polynomial expansion of $t_1(\omega; \mathbf{A}, \mathbf{B}, \mathbf{e}_{\beta_1}, \mathbf{e}_{\beta_2})$ and \\$t_2(\omega; \mathbf{A}, \mathbf{B}, \mathbf{e}_{\beta_1}, \mathbf{e}_{\beta_2})$, respectively.
\begin{equation}
\begin{split}
    &t_1^\alpha(\mathbf{A}, \mathbf{B}, \mathbf{e}_{\beta_1}, \mathbf{e}_{\beta_2}) = 
    h_1^\alpha(\mathbf{A},\mathbf{B}) + h_1^{\alpha-1}(\mathbf{C},\mathbf{D}) + g_2^\alpha(\mathbf{B}, \mathbf{C}) + g_2^\alpha(\mathbf{A}, \mathbf{D}) \\
    & t_2^\alpha(\mathbf{A}, \mathbf{B}, \mathbf{e}_{\beta_1}, \mathbf{e}_{\beta_2}) = -h_2^\alpha(\mathbf{A},\mathbf{B}) - h_2^{\alpha-1}(\mathbf{C},\mathbf{D}) 
    - g_1^\alpha(\mathbf{B},\mathbf{C}) + g_1^\alpha(\mathbf{A},\mathbf{D})
\end{split}
\end{equation}

\newpage
\appendix
\counterwithin{equation}{section}

\section{Basic identities} \label{appendix}
In this section, we outline basic matrix identities and definitions that we utilize.
\begin{itemize}
    \item If $\lambda$ are the eigenvalues of matrix $\mathbf{A} \in \R^{n\times n}$, then $\lambda^k$ are the eigenvalues of $\mathbf{A}^k$, where $k\in \mathbb{N}$.
    
    \item If $\lambda$ are the eigenvalues of matrix $\mathbf{A} \in \R^{n\times n}$, then $\lambda + \alpha$, where $\alpha \in \C$, are the eigenvalues of the matrix $\mathbf{A} + \alpha \mathbf{I}$.
    
    \item If $\lambda_i$ are the eigenvalues of matrix $\mathbf{A} \in \R^{n\times n}$, then we have,
    \begin{equation}
    \mathrm{det}(\mathbf{A})=\prod_{k=1}^{n} \lambda_k \label{eq:det_prod}
    \end{equation}
    \begin{equation}
    \Tr(\mathbf{A}) = \sum_{k=1}^n\lambda_k \label{eq:trace_sum}
    \end{equation}
    
    \item Properties of the adjugate of a matrix $\mathbf{A} \in \R^{n\times n}$,
    \begin{equation}
    \mathrm{adj}(\mathbf{A})^\top = \mathrm{adj}(\mathbf{A}^\top)
    \end{equation}
    \begin{equation}
    \mathrm{adj}(\overline{\mathbf{A} }) = \overline{\mathrm{adj}(\mathbf{A})}
    \end{equation}
    
    \item Laplace transform of a matrix exponential amounts to the resolvent of the matrix ($\mathbf{A} \in \R^{n\times n}$),
    \begin{equation}
        \mathcal{L}(e^{t\mathbf{A}}) = (s\I - \mathbf{A})^{-1} \label{eq:resolvent_appendix}
    \end{equation}
    
    \item For a matrix $\mathbf{A} \in \R^{n\times n}$, vectors $\mathbf{u} \in \R^{n \times 1}$ and $\mathbf{v} \in \R^{n \times 1}$, and scalar $\alpha \in \C$, we have,
    \begin{equation}
    \mathrm{det}(\mathbf{A}+\alpha\,\mathbf{u}\mathbf{v}^\top) = \mathrm{det}(\mathbf{A}) + \alpha\,\mathbf{v}^\top \mathrm{adj}(\mathbf{A}) \mathbf{u} \label{eq:det_prop}
    \end{equation}
    
    \item For a matrix $\mathbf{A} \in \R^{n\times n}$ with eigenvalues $\lambda_k$, the square modulus of the determinant of matrix $\mathbf{A}+\dot{\iota}\omega\mathbf{I}$ can be written as,
    \begin{equation}
    ||\mathbf{A}+\dot{\iota}\omega\mathbf{I}||^2 = \prod_{k=1}^{n}|\lambda_k+\dot{\iota}\omega|^2 = \prod_{k=1}^{n}(\lambda_k^2+\omega^2) \label{eq:imp2}
    \end{equation}

    \item For a matrix $\mathbf{A} \in \R^{n\times n}$ with eigenvalues $\lambda_k$, we have,
    \begin{equation}
    {\mathrm{det}(\omega\I-\mathbf{A})\, \mathrm{det}(\omega\I-\mathbf{A}^\top)} = \left( \prod_{j=1}^{n} (\omega - \lambda_j) \right) \left( \prod_{k=1}^{n} (\omega - \lambda_k) \right)= \prod_{j=1}^{n} (\omega - \lambda_j)^2 \label{eq:imp21}
    \end{equation} 
    
    \item Given two polynomials $a(x)=a_0+a_1x+a_2x^2+...+a_n x^n$ and $b(x)=b_0+b_1x+b_2x^2+...+b_m x^m$, the product $a(x)b(x)$ yields a new polynomial of degree $m+n$,
    \begin{equation}
    a(x)b(x) = \sum_{k=0}^{m+n} c_k x^k
    \end{equation}
    where $c_k$ can be written as,
    \begin{equation}
    c_k =  \sum_{i+j=k} a_i b_j  \label{eq:pol_prod}
    \end{equation}
    
    \item We can express a monic polynomial in $\omega$, in terms of elementary symmetric polynomials $e^i (\lambda_1,\lambda_2,\dots,\lambda_n)$ as,
    \begin{equation}
   \prod_{i=1}^n (\lambda_i + \omega) = \sum_{i=0}^{n} e^i \omega^{n-i} \label{eq:esp}
    \end{equation}
\end{itemize}

\setlength\bibitemsep{10pt}
\printbibliography[heading=bibnumbered]

% --------------------------------------------------------------
%     You don't have to mess with anything below this line.
% --------------------------------------------------------------
 
\end{document}